\title{A unified framework to generate optimized compact finite difference schemes}
\author{
\begin{tabular}{ccc}
Vedang M. Deshpande\footnote{vedang.deshpande@tamu.edu}, & Raktim Bhattacharya\footnote{raktim@tamu.edu},  & Diego A. Donzis\footnote{donzis@tamu.edu}
\end{tabular}\\[4mm]
Aerospace Engineering, Texas A\&M University\\
College Station, TX 77843-3141, USA.}
\date{}
\newcommand{\indi}{i}
\newcommand{\indj}{j}
\newcommand{\npt}{M} 
\newcommand{\nptAug}{\hat{\npt}}
\newcommand{\NAug}{\hat{N}}
\newcommand{\NAc}{\bar{N}}
\newcommand{\kdx}{\eta}
\newcommand{\mkdx}{\tilde{\kdx}}  
\newcommand{\wtfun}{\gamma(\kdx)}
\newcommand{\err}{e(\kdx)}
\newcommand{\conjErr}{\bar{e}(\kdx)}
\DeclareMathOperator{\sech}{sech}
\newcommand{\dx}{\Delta x}
\newcommand{\dt}{\Delta t}
\newcommand{\vo}[1]{\mathbf{#1}}
\newcommand{\shift}[1]{\vo{\Phi}_{#1}}
\newcommand{\Ltwo}[1]{\left\|#1\right\|_{\mathcal{L}_2}}
\newcommand{\semiDiscSysMat}{\vo{\Lambda}}
\newcommand{\rkA}{\boldsymbol{\mathit{A}}}
\newcommand{\rkB}{\boldsymbol{\mathit{b}}} 
\newcommand{\rkC}{\boldsymbol{\mathit{c}}}
\newcommand{\rkE}{\boldsymbol{\mathit{e}}}
\newcommand{\rkZ}{\boldsymbol{\mathit{Z}}}
\newcommand{\rka}{\mathit{a}}
\newcommand{\rkb}{\mathit{b}}
\newcommand{\rkc}{\mathit{c}}
\newcommand{\rks}{\mathit{s}}
\newcommand{\real}{\mathbb{R}}
\newcommand{\complex}{\mathbb{C}}
\newcommand{\ith}{i^{\text{th}}}
\newcommand{\ofd}[5]{$\boldsymbol{\mathcal{O}}^{#1,#2}_{#3,#4}(#5)$} 
\newcommand{\ofdC}[5]{$\boldsymbol{\mathcal{O}}^{#1}_{#3}(#5)$} 
\newcommand{\sfdC}[5]{$\boldsymbol{\mathcal{S}}^{#1}_{#3}(#5)$} 
\newcommand{\alert}[1]{\textcolor{red}{#1}}
\newcommand{\blah}[1]{\textcolor{lightgray}{#1}}
\newcommand{\eqnlabel}[1]{\label{eqn:#1}}
\newcommand{\figlabel}[1]{\label{fig:#1}}
\newcommand{\bdel}{\boldsymbol{\delta}}
\newcommand{\eqn}[1]{Eq.(\ref{eqn:#1})}
\newcommand{\fig}[1]{Fig.\ref{fig:#1}}
\newcommand{\Fig}[1]{Fig.\ref{fig:#1}}
\newcommand{\sect}[1]{Section \ref{#1}}
\newtheorem{lem}{Lemma}
\newtheorem{remark}{Remark}
\newtheorem{cor}{Corollary}
\newtheorem{defn}{Definition}
\newcommand{\keywords}[1]{\textit{\textbf{Keywords:}} #1}
\begin{document}
\maketitle

\begin{abstract}
A unified framework to derive optimized compact schemes for a uniform grid is presented. The optimal scheme coefficients are determined analytically by solving an optimization problem to minimize the spectral error subject to equality constraints that ensure specified order of accuracy. A rigorous stability analysis for the optimized schemes is also presented.
We analytically prove the relation between order of a derivative and symmetry or skew-symmetry of the optimal coefficients approximating it. We also show that other types of schemes e.g., spatially explicit, and biased finite differences, can be generated as special cases of the framework.

\keywords{compact finite differences, optimal schemes, spectral error minimization, partial differential equations}
\end{abstract}

\section{Introduction} \label{intro}
Finite differences have been extensively studied and used to solve ordinary differential equations (ODEs) and partial differential equations (PDEs) numerically. Finite differences, in a broad sense, are classified into two categories, namely, explicit and implicit (compact) finite differences.
In the explicit formulation, approximation of a function derivative at a grid point depends only on the function values at that grid point and grid points adjacent to it. On the other hand, the implicit formulation approximates a function derivative at a grid point by using not only the function values but also the derivatives of the function at adjacent grid points.
Each one of these two formulations has benefits associated with it. For example, explicit schemes are computationally more efficient than compact schemes. However, for a given stencil size, compact schemes can achieve better accuracy and stability than explicit schemes. Thus, depending upon the problem at hand, one can choose to use explicit or compact formulation.

Kumari, et al. (2019) proposed a unified approach to derive optimized explicit schemes in \cite{kumari2018unified}. In this paper, we present a unified framework to derive optimized compact schemes as a natural extension and generalization of the approach presented in \cite{kumari2018unified}.
We also show that optimized schemes derived using the explicit formulation can be recovered as special cases of the compact formulation presented in this paper.

One of the widely used formulations for deriving compact schemes is the 7 grid points stencil which leads to pentadiagonal finite differences \cite{lele1992compact, kim1996opt, kim2007bcopt, zang2003prefactor, zang2011prefactor, hamr2008opt}. Such formulations approximate the first derivative at $i^\text{th}$ grid point as
\begin{align}
\beta f_{i-2} ^\prime + \alpha f_{i-1} ^\prime + f_{i} ^\prime + \alpha f_{i+1} ^\prime + \beta f_{i+2} ^\prime
= c\frac{f_{i+3}-f_{i-3}}{6\dx} + b\frac{f_{i+2}-f_{i-2}}{4\dx} + a\frac{f_{i+1}-f_{i-1}}{2\dx},
\eqnlabel{penta7stencil}
\end{align}
where  $f_i := f(x_i)$ and $f^\prime_i:=\frac{\partial f}{\partial x}\Big|_{x=x_i}$. Note that left hand side (LHS) and right hand side (RHS) stencil sizes for \eqn{penta7stencil} are 5 and 7 respectively. After substituting Taylor series expansion of the terms in the above equation, relations between the unknown parameters $a,b,c,\alpha,$ and $\beta$ are obtained by matching the coefficients.
If the first unmatched coefficient is associated with $\dx^{p+1}$, then the formal order of accuracy for the discretization is said to be $p+1$. When $f$ is periodic over the $x$-domain, derivative at each grid point can be calculated by solving a cyclic linear system of equations with formal truncation error as a global measure of error \cite{lele1992compact}.

In real physical problems which involve a spectrum of scales, one is also keen to quantify and minimize errors incurred at different wavenumbers in the frequency domain. The method of matching coefficients does not guarantee the desired spectral behavior of the compact scheme derived from \eqn{penta7stencil}.
To ensure that all the relevant scales in a problem are properly resolved, it is crucial to have desired spectral accuracy at the wavenumbers of interest.
Over the years, various frameworks were proposed to construct optimized compact schemes with desired spectral accuracy \cite{kim1996opt, kim2007bcopt, zang2003prefactor, zang2011prefactor, hamr2008opt}.
They all formulate compact schemes as generalizations of Pad\'{e} approximation similar to \eqn{penta7stencil}, and calculate optimal coefficients by minimizing a cost function,
most commonly, weighted $\mathcal{L}_2$ norm of suitably defined spectral error over the wavenumber space. Different techniques have been used to solve the minimization problem.
For example, \cite{kim1996opt, kim2007bcopt, zang2003prefactor, zang2011prefactor} used a weighting function constructed from unknown variables involved in the optimization problem to make the cost function integrable.
Then optimal solution is calculated analytically by finding the local minimum of the cost function subject to order of accuracy constraints.
On the other hand, \cite{hamr2008opt} minimizes the cost function weighted equally across all wavenumbers and optimal solution is found using sequential quadratic programming (SQP). An extended optimized compact schemes framework for the implementation of boundary conditions was presented in \cite{kim2007bcopt}.
To circumvent the computationally expensive inversion of a band matrix in case of compact schemes, \cite{zang2003prefactor, zang2011prefactor, rona2017prefactor} proposed prefactored optimized compact finite differences which use forward and backward biased schemes that can be solved explicitly.
Frameworks devised specifically to derive optimized explicit schemes were presented in \cite{webb1993drp,zhuang2002upwind,bogey2004bailly,zhang2013maxnorm}. Unlike others, \cite{zhang2013maxnorm} defined the cost function as maximum (infinity) norm of the spectral error
and used simulated annealing technique to find the optimal solution.
Each afore discussed framework is restrictive in at least one of the following ways:
\begin{enumerate}
  \item Frameworks for optimal compact schemes are limited to derive pentadiagonal (or tridiagonal) schemes. They can not be easily scaled up for larger stencil sizes.
	\item Symmetry (skew-symmetry) of coefficients for periodic domains is imposed a priori for even (odd) derivatives. Whereas, in this paper we show that symmetry or skew-symmetry of optimal coefficients is a consequence of optimization based on an integrated formulation.
  \item Most of the frameworks rely on a carefully constructed weighting function to make the cost function integrable, which facilitates the analytical computation of optimal coefficients. Therefore, the weighting function has to have a special form, which restricts the choice of weighting function.	Moreover, the weighting function is dependent upon the variables of the optimization problem itself. Therefore, nature of the weighting function is not completely known.
	\item Frameworks are requirement specific and there is no unifying generalized framework. For example,  \cite{kim1996opt, kim2007bcopt, hamr2008opt, zang2003prefactor, zang2011prefactor} focus on pentadiagonal compact schemes, \cite{webb1993drp,zhuang2002upwind,bogey2004bailly,zhang2013maxnorm,kumari2018unified}
	are exclusively for explicit finite difference, and \cite{kim1996opt,webb1993drp,zhuang2002upwind,bogey2004bailly} approximate only the first derivative.
\end{enumerate}

In this paper we present a unifying framework to derive optimized compact schemes, which overcomes the limitations of existing frameworks. The key features of the framework are as follows:
\begin{enumerate}
\item Optimal coefficients of compact schemes to approximate a derivative of any order are determined analytically by solving a minimization problem with equality constraints.
\item First, we develop a framework to derive optimal compact schemes of equal LHS and RHS stencil sizes. Numerical simulations show that schemes with equal LHS and RHS stencil sizes perform better than those with unequal stencil sizes.
\item This generalized framework also allows us to derive compact schemes with unequal LHS and RHS stencil sizes, explicit finite differences, and biased schemes for non-periodic domains by imposing additional equality constraints in the optimization problem, which are discussed as special cases of the framework.
\item The weighting function can be chosen independent of the cost function.
\item A rigorous stability analysis of the schemes is performed to maximize the time step $\dt$ while guaranteeing the stability of discretization.
\end{enumerate}

The organization of the paper is as follows. In \sect{implicitFramework} we present a generalized framework to derive central optimal compact schemes and prove few important theorems. We perform stability analysis of the schemes for semi-discrete and fully discrete case in \sect{stability}.
In \sect{specialCases} we discuss a few special cases of the framework. Numerical results obtained using optimal schemes derived in this paper are presented in \sect{numericalResults}. Conclusions of this study are discussed in \sect{conclusions}.
Proof of a theorem is presented in Appendix \ref{sec:appInvarianceGrid}, and values of the coefficients used for numerical simulations are tabulated in Appendix \ref{sec:appOptimCoeff} and \ref{sec:appButcherTab}.

\section{Formulation of the unified framework} \label{implicitFramework}

A generalization of \eqn{penta7stencil} which is a Pad\'{e} approximation of $d^\text{th}$ spatial derivative, $\frac{\partial^d f(x)}{\partial x^d}$, with equal LHS and RHS stencil sizes is given as:
\begin{align}
\sum_{m = -\npt} ^{\npt} \mathrm{b}_m f^{(d)}_{i+m}	= \frac{1}{(\dx)^d}\sum_{m = -\npt} ^{\npt} \mathrm{a}_m f_{i+m},
\eqnlabel{pade1}
\end{align}
where  $f_i := f(x_i)$, $f^{(d)}_i:=\frac{\partial^d f}{\partial x^d}\Big|_{x=x_i}$, $\npt>0$ and $N:=2\npt+1$ is a positive integer which denotes the stencil size.
Note that index $m=0$ corresponds to the $\ith$ grid point at which derivative is to be approximated.

For convenience and clarity of discussion, we denote $(p+1)^{\text{th}}$ order accurate optimized compact finite difference schemes obtained using \eqn{pade1} as
\ofdC{\npt}{\npt}{\npt}{\npt}{p+1}. The superscript $\npt$ denotes the range of $m$ ($-\npt$ to $\npt$) on the RHS of \eqn{pade1}, and the subscript $\npt$ is for the range of $m$ on the LHS of \eqn{pade1}. For example, we denote $4^\text{th}$ order accurate optimized scheme obtained using $\npt=3$ by \ofdC{3}{3}{3}{3}{4}.
Schemes derived by standard method of matching coefficients without any optimization, such as presented in \cite{lele1992compact}, are denoted by \sfdC{\npt}{\npt}{\npt}{\npt}{\cdot}.

Using Taylor series expansion, we get
\begin{multline*}
	f_{i+m} = f_i + f_i'\, m\Delta x +\cdots +  f_i^{(d-1)}\frac{(m\Delta x)^{(d-1)}}{(d-1)!} + f_i^{(d)}\frac{(m\Delta x)^d}{d!} + f_i^{(d+1)}\frac{(m\Delta x)^{(d+1)}}{(d+1)!}  + \cdots \\ + f_i^{(d+r)}\frac{(m\Delta x)^{(d+r)}}{(d+r)!} + \cdots ,
\end{multline*}
and
\begin{align*}
	f^{(d)}_{i+m} = f^{(d)}_i + f_i^{(d+1)}(m\Delta x) + \cdots +  f_i^{(d+r)}\frac{(m\Delta x)^{r}}{r!} + \cdots .
\end{align*}
Substituting them in \eqn{pade1}, and matching coefficients we get the following linear constraints in $\mathrm{a}_m$ and $\mathrm{b}_m$,
\begin{align*}
\sum_m \mathrm{a}_m m^j  & = 0, \text{ for } j = 0, \cdots, d-1;\\
\sum_m \left(\frac{m^{d+r}}{(d+r)!}\mathrm{a}_m - \frac{m^r}{r!}\mathrm{b}_m \right)& = 0, \text{ for } r = 0,\cdots,p.
\end{align*}

We use the definition $0^0=1$ for the equations above. For a more compact representation of the formulation, let $m\in\{-\npt,\cdots,\npt\}$ for periodic domains. Let us define vectors $\vo{a}_d\in\real^N$ and $\vo{b}_d\in\real^N$ as \footnote{$\real$ denotes the set of all real numbers and $\complex$ denotes the set of all complex numbers.}
\begin{align*}
\vo{a}_d &:= \begin{bmatrix}\mathrm{a}_{-\npt} & \mathrm{a}_{-\npt+1} & \cdots & \mathrm{a}_{\npt-1} & \mathrm{a}_{\npt}\end{bmatrix}^T, \\
\vo{b}_d &:= \begin{bmatrix}\mathrm{b}_{-\npt} & \mathrm{b}_{-\npt+1} & \cdots & \mathrm{b}_{\npt-1} & \mathrm{b}_{\npt}\end{bmatrix}^T.
\end{align*}
Let us also define vector
\begin{align*}
\vo{m} := \begin{bmatrix} -\npt & -\npt+1 & \cdots & \npt-1 & \npt \end{bmatrix}^T.
\end{align*}
We can then write the above linear constraints in terms of $\vo{a}_d$ and $\vo{b}_d$ as
\begin{align}
\vo{a}_d^T\vo{X}_d - \vo{b}_d^T\vo{Y}_d = \vo{0}_{1\times (d+p+1)}
\eqnlabel{linPade}
\end{align}
where matrices $\vo{X}_d \in \real^{N \times (d+p+1)}$, $\vo{Y}_d \in \real^{N \times (d+p+1)}$ are defined as
\begin{align}
\vo{X}_d &:= \begin{bmatrix}
\vo{1}_{N\times 1} & \vo{m} & \cdots &
\vo{m}^{d-1} & \frac{\vo{m}^d}{d!} & \cdots &\frac{\vo{m}^{d+p}}{(d+p)!}
\end{bmatrix},\eqnlabel{structureX} \\
\vo{Y}_d &:= \begin{bmatrix}
\vo{0}_{N\times d} & \vo{1}_{N\times 1} & \vo{m}  & \cdots & \frac{\vo{m}^p}{p!}
\end{bmatrix}.
\eqnlabel{structureY}
\end{align}
The exponents of $\vo{m}$ are element wise. \eqn{linPade} represents $(d+p+1)$ constraints on $2(2\npt+1)$ variables. When the number of constraints is less than the number of variables, the system of linear equations can not be solved uniquely.
In other words, the system has extra degrees of freedom which can be used to calculate a set of variables which minimizes a well defined cost function. We make use of this freedom to minimize the spectral error.

To study the spectral behavior of finite difference schemes at different wavenumbers (or frequencies), it is customary to work in frequency domain.
Therefore, we consider a discrete Fourier mode as follows
\begin{align*}
f(x) &= \hat{f}e^{jkx}
\end{align*}
where, $j=\sqrt{-1}$, and $\hat{f}$ is the Fourier coefficient of the mode corresponding to wavenumber $k$. Then the analytical $d^\text{th}$ derivative is
\begin{align}
 f^{(d)}(x) &= (jk)^d f(x)
 \eqnlabel{dthDerAnalytical}
\end{align}
The finite difference approximation for the $d^\text{th}$ derivative from \eqn{pade1} is then
\begin{align*}
\bigg(\sum_m \mathrm{b}_m e^{jkm\Delta x} \bigg) f^{(d)}_i = \frac{1}{(\dx)^d} \bigg(\sum_m \mathrm{a}_m e^{jkm\Delta x}\bigg)f_i.
\end{align*}
Let us define normalized wavenumber as $\kdx:=k\dx$, $\kdx \in [0,\pi]$. Then the previous equation can be written in terms of $\kdx$ as
\begin{align*}
\bigg(\vo{C}^T(\kdx) + j\vo{S}^T(\kdx)\bigg)\vo{b}_df^{(d)}_i =  \frac{1}{(\dx)^d}\bigg(\vo{C}^T(\kdx) + j\vo{S}^T(\kdx)\bigg)\vo{a}_df_i,
\end{align*}
or,
\begin{align}
f^{(d)}_i =  \frac{1}{(\dx)^d} \frac{\bigg(\vo{C}^T(\kdx) + j\vo{S}^T(\kdx)\bigg)\vo{a}_d}{\bigg(\vo{C}^T(\kdx) + j\vo{S}^T(\kdx)\bigg)\vo{b}_d}  f_i,
\eqnlabel{dthDerApprox}
\end{align}
where
\begin{align}
\vo{C}(\kdx) := \begin{bmatrix} \cos(-\npt \kdx)\\\vdots \\ \cos(-\kdx) \\ 1 \\ \cos{\kdx} \\ \vdots \\\cos(\npt \kdx) \end{bmatrix}, \quad \text{and} \quad \vo{S}(\kdx) := \begin{bmatrix} \sin(-\npt \kdx)\\\vdots \\ \sin(-\kdx) \\ 0 \\ \sin(\kdx) \\ \vdots \\\sin(\npt \kdx) \end{bmatrix}.
\eqnlabel{SinCos:def}
\end{align}
 Comparison of \eqn{dthDerApprox} with \eqn{dthDerAnalytical} leads us to define the spectral error $e(\kdx)$ at wavenumber $\kdx = k\dx$ as
\begin{align}
	&e(\kdx) := \frac{\bigg(\vo{C}^T(\kdx) + j\vo{S}^T(\kdx)\bigg)\vo{a}_d}{\bigg(\vo{C}^T(\kdx) + j\vo{S}^T(\kdx)\bigg)\vo{b}_d} - (j\kdx)^d,\\
	\text{or } & e(\kdx) = \frac{\bigg(\vo{C}^T(\kdx) + j\vo{S}^T(\kdx)\bigg)\vo{a}_d - (j\kdx)^d\bigg(\vo{C}^T(\kdx) + j\vo{S}^T(\kdx)\bigg)\vo{b}_d}{\bigg(\vo{C}^T(\kdx) + j\vo{S}^T(\kdx)\bigg)\vo{b}_d}.
	 \eqnlabel{padeWave}
\end{align}
We can also express the spectral error in terms of \textit{modified wavenumber} $(j\mkdx)^d := \frac{\big(\vo{C}^T(\kdx) + j\vo{S}^T(\kdx)\big)\vo{a}_d}{\big(\vo{C}^T(\kdx) + j\vo{S}^T(\kdx)\big)\vo{b}_d}$ as
\begin{align}
	e(\kdx) = (j\mkdx)^d - (j\kdx)^d = j^d(\mkdx^d - \kdx^d).
	\eqnlabel{defmkdx}
\end{align}
We compare $\mkdx^d$ with $\kdx^d$ to show the spectral accuracy of optimal schemes in sections to follow.
The weighted ${\mathcal{L}_2}$ norm of a function $e(\kdx)$, for $\kdx \in [0,\pi]$, is defined as
\begin{align}
 \|\err\|^2_{\mathcal{L}_2}:= \int _0 ^{\pi} \wtfun \conjErr \err d\kdx =: \left\langle \conjErr \err \right\rangle.
 \eqnlabel{normWithGamma}
\end{align}
Where, $\conjErr$ is the complex conjugate of $\err$, and $\wtfun \geq 0$ is a weighting function. The objective here is to determine $\vo{a}_d$ and $\vo{b}_d$, which minimize $\Ltwo{e(\kdx)}$, subject to order accuracy constraint, i.e.

\begin{align*}
\min_{\vo{a}_d,\vo{b}_d\in\real^{N}} \|e(\kdx)\|^2_{\mathcal{L}_2}, \text{ subject to \eqn{linPade}.}
\end{align*}
Before proceeding further, we present a definition and a lemma, which is used in the subsequent discussion.

\begin{defn}
	\label{defn:symmVec}
A given vector $\vo{x} \in \real ^ n$, is symmetric if $\vo{J}\vo{x} = \vo{x}$, or skew-symmetric if $\vo{J}\vo{x} = - \vo{x}$, where, $\vo{J}$ is an anti-diagonal identity matrix of dimension $n$.
\end{defn}
Note that, $\vo{J}$ is an operator which reverses the order of elements in the vector. For example, let,
\begin{align*}
\vo{J} = \begin{bmatrix}
0 & 0 & 1 \\
0 & 1 & 0 \\
1 & 0 & 0
\end{bmatrix}; \quad \vo{x} = \begin{bmatrix} 1 \\2\\3\end{bmatrix}, \quad \text{then,} \quad \vo{J} \vo{x}  = \begin{bmatrix} 3\\2\\1\end{bmatrix}; \quad \text{and} \quad \vo{x} ^T\vo{J} = \begin{bmatrix} 3&2&1\end{bmatrix}.
\end{align*}
If $n$ is odd, then the vector is said to be symmetric or skew-symmetric \textit{about the central element}. It can be easily verified that, $\vo{C}(\kdx)$ and $\vo{S}(\kdx)$ defined in \eqn{SinCos:def} are respectively symmetric and skew-symmetric about the central element, i.e., they satisfy
\begin{align}
\vo{C}^T(\kdx)\vo{J} = \vo{C}^T(\kdx) \quad \text{and} \quad \vo{S}^T(\kdx)\vo{J} = -\vo{S}^T(\kdx).
\eqnlabel{evenCJSJ}
\end{align}
Also note that, for any $\vo{x} \in \real ^ n$, we can define a symmetric vector $\vo{x}_s:=(\vo{x}+\vo{J}\vo{x})/2$ and a skew-symmetric vector  $\vo{x}_w:=(\vo{x}-\vo{J}\vo{x})/2$ such that $\vo{x}=\vo{x}_s+\vo{x}_w$.

\begin{lem}
	\label{lem:normRelaxLemma}
	Let $f(x):[q_1,q_2] \rightarrow \complex$ and $g(x):[q_1,q_2] \rightarrow \complex \backslash \{0\}$ where $q_1,q_2 \in \real$ and $q_1\leq q_2$. If $\bar{g}(x)g(x) \geq \epsilon^2$ for some $\epsilon \in \real \backslash \{0\}$, then the following is true
	\begin{align*}
	\Ltwo{ \frac{f}{g}} \leq \frac{\Ltwo{f}}{\epsilon^2}.
	\end{align*}
\end{lem}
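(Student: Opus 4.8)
The plan is to push everything down to a pointwise estimate on the integrand of the weighted $\mathcal{L}_2$ norm and then appeal to monotonicity of the integral. Since $g$ takes values in $\complex\backslash\{0\}$, the real‑valued function $\bar g g = |g|^2$ is strictly positive on $[q_1,q_2]$, so $f/g$ is well defined and, expanding the norm as in \eqn{normWithGamma} (now with the integral taken over $[q_1,q_2]$),
\begin{align*}
\Ltwo{\frac{f}{g}}^2 = \int_{q_1}^{q_2} \wtfun\, \overline{\left(\frac{f(\kdx)}{g(\kdx)}\right)}\frac{f(\kdx)}{g(\kdx)}\, d\kdx = \int_{q_1}^{q_2} \wtfun\, \frac{\bar f(\kdx) f(\kdx)}{\bar g(\kdx) g(\kdx)}\, d\kdx .
\end{align*}

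The key step is to invoke the hypothesis $\bar g(\kdx) g(\kdx) \ge \epsilon^2>0$, which gives the pointwise bound $0 < 1/\big(\bar g(\kdx) g(\kdx)\big) \le 1/\epsilon^2$ for every $\kdx\in[q_1,q_2]$. Multiplying this inequality by the nonnegative factor $\wtfun\,\bar f(\kdx) f(\kdx) = \wtfun\,|f(\kdx)|^2 \ge 0$ — where both $\wtfun\ge 0$ (the assumed property of the weighting function) and $|f|^2\ge 0$ are needed so that the inequality is not reversed — yields $\wtfun\, \bar f f/(\bar g g) \le \wtfun\, \bar f f/\epsilon^2$ pointwise. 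Integrating over $[q_1,q_2]$ and pulling the constant $1/\epsilon^2$ outside the integral then gives $\Ltwo{f/g}^2 \le \Ltwo{f}^2/\epsilon^2$, and taking nonnegative square roots completes the proof.

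I do not anticipate any genuine obstacle here: the argument is just ``pointwise inequality on the integrand plus monotonicity of the integral.'' The only points that deserve a word of care are the ones flagged above — that $g\neq 0$ legitimizes writing $|f/g|^2 = |f|^2/|g|^2$, that the sign conditions $\wtfun\ge 0$ and $|f|^2\ge 0$ let the pointwise estimate survive multiplication, and that $f$ and $g$ are regular enough (measurable, with finite integrals) on the compact interval for the monotonicity step to apply. The lemma is clearly being set up for later use in bounding $\Ltwo{\err}$ from \eqn{padeWave}: the numerator there plays the role of $f$ and the denominator $\big(\vo{C}^T(\kdx)+j\vo{S}^T(\kdx)\big)\vo{b}_d$ plays the role of $g$, which is bounded away from zero for any admissible $\vo{b}_d$.
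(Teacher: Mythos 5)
Your proof is correct and follows essentially the same route as the paper's: the pointwise bound $1/(\bar g(x) g(x)) \le 1/\epsilon^2$, multiplied by the nonnegative factor $\gamma(x)\,\bar f(x) f(x)$ and integrated over $[q_1,q_2]$. One small remark: your final ``take nonnegative square roots'' step actually yields $\Ltwo{f/g} \le \Ltwo{f}/|\epsilon|$ rather than the literal $\Ltwo{f}/\epsilon^2$ of the statement; the paper's own proof stops at the squared-norm inequality $\int \gamma\,\bar f f/(\bar g g)\,dx \le \int \gamma\,\bar f f\,dx/\epsilon^2$ and, both there and in its later use (e.g.\ \eqn{normRelax1}), implicitly reads $\Ltwo{\cdot}$ as the squared norm, so the intended conclusion is exactly the inequality you obtain before taking roots.
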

\begin{proof} By assumption, $\bar{g}(x)g(x) \geq \epsilon^2$. It follows immediately that,
	\begin{align*}
	 \frac{1}{	\bar{g}(x)g(x)} \leq \frac{1}{\epsilon^2} \implies \frac{\bar{f}(x)f(x)}{\bar{g}(x)g(x)} \leq \frac{\bar{f}(x)f(x)}{\epsilon^2}
	\end{align*}
	\begin{align*}
	 \implies \int_{q_1}^{q_2} \gamma(x) \frac{\bar{f}(x)f(x)}{\bar{g}(x)g(x)} dx \leq \int_{q_1}^{q_2} \gamma(x) \frac{\bar{f}(x)f(x)}{\epsilon^2} dx,
	\end{align*}
for weight function $\gamma(x)\geq 0$. By using the definition of $\mathcal{L}_2$ norm, we get the desired result.
\end{proof}

Now, we consider the minimization problem for even and odd derivatives separately, and solve it analytically.

\subsection{Even derivatives}
For even $d$, i.e. $d:=2q$, for $q=\{1, 2, 3, \cdots\}$, the spectral error from \eqn{padeWave} is
\begin{align*}
e(\kdx) = \frac{\left(\vo{C}^T(\kdx)\vo{a}_d - (-1)^{q}\kdx^{d}\vo{C}^T(\kdx)\vo{b}_d\right)	+ j \left(\vo{S}^T(\kdx)\vo{a}_d-(-1)^{q}\kdx^{d}\vo{S}^T(\kdx)\vo{b}_d\right)}{\left(\vo{C}^T(\kdx) + j\vo{S}^T(\kdx)\right)\vo{b}_d}.
\end{align*}
Therefore,
\begin{align*}
\Ltwo{e(\kdx)} &= \Ltwo{\frac{\left(\vo{C}^T(\kdx)\vo{a}_d - (-1)^{q}\kdx^{d}\vo{C}^T(\kdx)\vo{b}_d\right)	+ j \left(\vo{S}^T(\kdx)\vo{a}_d-(-1)^{q}\kdx^{d}\vo{S}^T(\kdx)\vo{b}_d\right)}{\left(\vo{C}^T(\kdx) + j\vo{S}^T(\kdx)\right)\vo{b}_d}}.
\end{align*}
Minimization of $\Ltwo{e(\kdx)}$ is not a convex problem. Therefore, we use Lemma \ref{lem:normRelaxLemma} to relax the problem and find an upper bound on $\Ltwo{e(\kdx)}$. But, we have to ensure that $\bar{g}(\kdx)g(\kdx) \geq \epsilon^2$, where $g(\kdx):= (\vo{C}^T(\kdx) + j\vo{S}^T(\kdx))\vo{b}_d$.
Note that,
\begin{align*}
 \bar{g}(\kdx)g(\kdx) &= (\vo{C}^T(\kdx)\vo{b}_d)^2 + (\vo{S}^T(\kdx)\vo{b}_d)^2 \\
 &= (\vo{C}^T(\kdx)\vo{b}_{d,s} + \vo{C}^T(\kdx)\vo{b}_{d,w})^2 + (\vo{S}^T(\kdx)\vo{b}_{d,s} + \vo{S}^T(\kdx)\vo{b}_{d,w})^2,
\end{align*}
where $\vo{b}_{d}$ is decomposed into symmetric and skew-symmetric parts as per Definition \ref{defn:symmVec}. Using the fact that any symmetric and skew-symmetric vectors are orthogonal to each other, we get,
\begin{align*}
 \bar{g}(\kdx)g(\kdx) &= (\vo{C}^T(\kdx)\vo{b}_{d,s})^2 + (\vo{S}^T(\kdx)\vo{b}_{d,w})^2.
\end{align*}
RHS of the previous equation is sum of squares and it can be zero only if each term is zero individually. If we ensure that at least one term is non-zero, then we can use Lemma \ref{lem:normRelaxLemma}. Consider the first term,
\begin{align*}
 (\vo{C}^T(\kdx)\vo{b}_{d,s})^2 = \vo{b}_{d,s}^T \vo{C}(\kdx) \vo{C}^T(\kdx) \vo{b}_{d,s}.
\end{align*}
This term can be zero only if either $\vo{b}_{d,s} = \vo{0}$, or $\vo{b}_{d,s} \neq \vo{0}$ and $\vo{b}_{d,s} \in \text{null}(\vo{C}(\kdx) \vo{C}^T(\kdx))$. Because of the special structure of $\vo{C}(\kdx) \vo{C}^T(\kdx)$, it can be easily verified that no symmetric vector spans its null space.
This eliminates the second possibility. And we eliminate the first possibility by setting the central element of $\vo{b}_d$, i.e. $\mathrm{b}_0$ to be a non-zero real constant. Therefore,
\begin{align*}
 \mathrm{b}_0 = \kappa_0, \quad \text{where,} \quad  \kappa_0 \in \real \backslash \{0\}.
\end{align*}
This also ensures that, $\forall \mkern9mu \mathrm{b}_0 = \kappa_0,	\mkern9mu \exists \mkern9mu \epsilon \in \real \backslash \{0\}$, such that  $(\vo{C}^T(\kdx)\vo{b}_{d,s})^2 \geq \epsilon^2$.
Consequently,
\begin{align*}
 \bar{g}(\kdx)g(\kdx) &= (\vo{C}^T(\kdx)\vo{b}_d)^2 + (\vo{S}^T(\kdx)\vo{b}_d)^2 \geq \epsilon^2, \quad \text{if} \mkern9mu \mathrm{b}_0 = \kappa_0 \neq 0.
\end{align*}
Now we can use Lemma \ref{lem:normRelaxLemma} to get
\begin{align}
\Ltwo{e(\kdx)} &= \Ltwo{\frac{\left(\vo{C}^T(\kdx)\vo{a}_d - (-1)^{q}\kdx^{d}\vo{C}^T(\kdx)\vo{b}_d\right)	+ j \left(\vo{S}^T(\kdx)\vo{a}_d-(-1)^{q}\kdx^{d}\vo{S}^T(\kdx)\vo{b}_d\right)}{\left(\vo{C}^T(\kdx) + j\vo{S}^T(\kdx)\right)\vo{b}_d}} \\
& \leq  \frac{\Ltwo{\left(\vo{C}^T(\kdx)\vo{a}_d - (-1)^{q}\kdx^{d}\vo{C}^T(\kdx)\vo{b}_d\right)	+ j \left(\vo{S}^T(\kdx)\vo{a}_d-(-1)^{q}\kdx^{d}\vo{S}^T(\kdx)\vo{b}_d\right)}}{\epsilon^2}.
\eqnlabel{normRelax1}
\end{align}

We can minimize $\Ltwo{e(\kdx)}$ by minimizing the numerator of the previous inequality since $\epsilon$ is a constant.
The actual values of $\epsilon$ and $\kappa_0$ are immaterial. The constant $\kappa_0$ can be chosen arbitrarily.
In fact it can be verified empirically that numerical values of optimal $\vo{a}_d$ and $\vo{b}_d$ normalized by $\kappa_0$ are independent of its prescribed value.
For simplicity, we select $\kappa_0$ and hence $\mathrm{b}_0$ to be unity, as in \cite{lele1992compact}. This also provides an intrinsic scaling for coefficients, where all coefficients are scaled with respect to $\mathrm{b}_0$.
Therefore, a constraint on $\mathrm{b}_0$ is imposed as
\begin{align}
 \mathrm{b}_0 = 1; \quad \text{or,} \quad \bdel_N^T(\npt+1)\vo{b}_d = 1,
 \eqnlabel{nonSingular}
\end{align}
where $\bdel_n^T(\cdot) \in \real ^n$ is a vector defined as follows
\begin{align}
\bdel_n^T(i) := \begin{bmatrix} \delta(i-1), & \delta(i-2),& \hdots, & \delta(i-(\npt+1)), & \hdots, & \delta(i-n) \end{bmatrix},
\eqnlabel{deltavec}
\end{align}
where $\delta(\cdot)$ is the scalar Kronecker delta function defined as
\begin{align}
\delta(k)  := \left\{\begin{array}{c} 0 \text{ if } k\neq 0,\\ 1 \text{ if } k = 0.\end{array}\right.
\eqnlabel{delta}
\end{align}
We therefore minimize $\Ltwo{e(\kdx)}$ by minimizing
\begin{align*}
&  \Ltwo{\left(\vo{C}^T(\kdx)\vo{a}_d - (-1)^{q}\kdx^{d}\vo{C}^T(\kdx)\vo{b}_d\right)	+ j \left(\vo{S}^T(\kdx)\vo{a}_d-(-1)^{q}\kdx^{d}\vo{S}^T(\kdx)\vo{b}_d\right)} \\
&=   \left\langle\left(\vo{C}^T(\kdx)\vo{a}_d - (-1)^{q}\kdx^{d}\vo{C}^T(\kdx)\vo{b}_d\right)^2 +  \left(\vo{S}^T(\kdx)\vo{a}_d-(-1)^{q}\kdx^{d}\vo{S}^T(\kdx)\vo{b}_d\right)^2\right\rangle,\\
& =   \begin{pmatrix}\vo{a}_d\\\vo{b}_d\end{pmatrix}^T\left\langle\begin{bmatrix}\vo{C}(\kdx)\\-(-1)^q\kdx^d\vo{C}(\kdx)\end{bmatrix}\begin{bmatrix}\vo{C}(\kdx)\\-(-1)^q\kdx^d\vo{C}(\kdx)\end{bmatrix}^T + \begin{bmatrix}\vo{S}(\kdx)\\-(-1)^q\kdx^d\vo{S}(\kdx)\end{bmatrix}\begin{bmatrix}\vo{S}(\kdx)\\-(-1)^q\kdx^d\vo{S}(\kdx)\end{bmatrix}^T\right\rangle\begin{pmatrix}\vo{a}_d\\\vo{b}_d\end{pmatrix},\\
& =   \begin{pmatrix}\vo{a}_d\\\vo{b}_d\end{pmatrix}^T\vo{Q}_d\begin{pmatrix}\vo{a}_d\\\vo{b}_d\end{pmatrix},
\end{align*}
where
\begin{align*}
\vo{Q}_d = \left\langle\begin{bmatrix}\vo{C}(\kdx)\\-(-1)^q\kdx^d\vo{C}(\kdx)\end{bmatrix}\begin{bmatrix}\vo{C}(\kdx)\\-(-1)^q\kdx^d\vo{C}(\kdx)\end{bmatrix}^T + \begin{bmatrix}\vo{S}(\kdx)\\-(-1)^q\kdx^d\vo{S}(\kdx)\end{bmatrix}\begin{bmatrix}\vo{S}(\kdx)\\-(-1)^q\kdx^d\vo{S}(\kdx)\end{bmatrix}^T\right\rangle.
\end{align*}
The optimization problem is therefore
\begin{equation}\left.
\begin{aligned}
&\min_{\vo{a}_d,\vo{b}_d\in\real^N}   \begin{pmatrix}\vo{a}_d\\\vo{b}_d\end{pmatrix}^T\vo{Q}_d\begin{pmatrix}\vo{a}_d\\\vo{b}_d\end{pmatrix},\\
\text{subject to }\;\; & \vo{a}_d^T\vo{X}_d - \vo{b}_d^T\vo{Y}_d = \vo{0}_{1\times (d+p+1)},\\
& \bdel_N^T(\npt+1)\vo{b}_d = 1.
\end{aligned}\;\;\right\}
\eqnlabel{evenOptim}
\end{equation}
This is a quadratic programming problem with equality constraints and the solution can be determined analytically. The equality constraints can be written as
\begin{align}
\begin{bmatrix}
\vo{X}_d^T & -\vo{Y}_d^T\\
\vo{0}_{1 \times N} & \bdel_N^T(\npt+1)
\end{bmatrix}\begin{pmatrix}\vo{a}_d\\\vo{b}_d\end{pmatrix} = \begin{bmatrix}\vo{0}_{(d+p+1)\times 1} \\ 1\end{bmatrix}.
\eqnlabel{padeConstr}
\end{align}
The Karush-Kuhn-Tucker (KKT) condition for this problem is
\begin{align*}
\left[\begin{array}{cc}   \vo{Q}_d & \left(\begin{array}{cc}\vo{X}_d^T & -\vo{Y}_d^T\\
\vo{0}_{1 \times N} & \bdel_N^T(\npt+1)\end{array}\right)^T\\
 \left(\begin{array}{cc}\vo{X}_d^T & -\vo{Y}_d^T\\
\vo{0}_{1 \times N} & \bdel_N^T(\npt+1)\end{array}\right) & \vo{0}_{(d+p+2)\times(d+p+2)}
\end{array}\right]\begin{pmatrix}\vo{a}_d^\ast\\\vo{b}_d^\ast\\\boldsymbol{\lambda}_d^\ast\end{pmatrix} = \begin{bmatrix}
\vo{0}_{2N\times 1}\\\vo{0}_{(d+p+1)\times 1} \\ 1
\end{bmatrix},
\end{align*}
and $\boldsymbol{\lambda}_d$ is the Lagrange multiplier associated with \eqn{padeConstr}.
Assuming the inverse exists, the optimal solution is given by
\begin{align}
\begin{pmatrix}\vo{a}_d^\ast\\\vo{b}_d^\ast\\\boldsymbol{\lambda}_d^\ast\end{pmatrix} = \left[\begin{array}{cc}  \vo{Q}_d & \left(\begin{array}{cc}\vo{X}_d^T & -\vo{Y}_d^T\\
\vo{0}_{1 \times N} & \bdel_N^T(\npt+1)\end{array}\right)^T\\
 \left(\begin{array}{cc}\vo{X}_d^T & -\vo{Y}_d^T\\
\vo{0}_{1 \times N} & \bdel_N^T(\npt+1)\end{array}\right) & \vo{0}_{(d+p+2)\times(d+p+2)}
\end{array}\right]^{-1}\begin{bmatrix}
\vo{0}_{2N\times 1}\\\vo{0}_{(d+p+1)\times 1} \\ 1
\end{bmatrix}.
\eqnlabel{padeEvenOptimal}
\end{align}
The optimal coefficients $\vo{a}_d^*$ and $\vo{b}_d^*$ turn out to be symmetric. The symmetry of coefficients, imposed a priori in the formulations discussed in \sect{intro}, appear here as a consequence of the optimization.
Analytical justification for the symmetry is given by the following lemma.

\begin{lem}
	\label{lem:evenImgZero}
	In case of even derivatives, optimal coefficients, both $\vo{a}_d^*$ and $\vo{b}_d^*$ are symmetric about the central element.
\end{lem}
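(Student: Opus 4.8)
The plan is to show that the optimization problem \eqn{evenOptim} decouples into two independent sub-problems, one governing the symmetric parts of $\vo{a}_d,\vo{b}_d$ and one governing the skew-symmetric parts, and that the skew-symmetric sub-problem is minimized by the zero vector. I would begin by writing $\vo{a}_d=\vo{a}_{d,s}+\vo{a}_{d,w}$ and $\vo{b}_d=\vo{b}_{d,s}+\vo{b}_{d,w}$ as in Definition \ref{defn:symmVec}. Using \eqn{evenCJSJ} together with the orthogonality of symmetric and skew-symmetric vectors (exactly the computation already carried out for $\bar{g}(\kdx)g(\kdx)$ just above Lemma \ref{lem:evenImgZero}), I get $\vo{C}^T(\kdx)\vo{a}_d=\vo{C}^T(\kdx)\vo{a}_{d,s}$ and $\vo{S}^T(\kdx)\vo{a}_d=\vo{S}^T(\kdx)\vo{a}_{d,w}$, and likewise for $\vo{b}_d$. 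Substituting into the objective $\big\langle(\vo{C}^T\vo{a}_d-(-1)^q\kdx^d\vo{C}^T\vo{b}_d)^2+(\vo{S}^T\vo{a}_d-(-1)^q\kdx^d\vo{S}^T\vo{b}_d)^2\big\rangle$ shows it equals $J_s(\vo{a}_{d,s},\vo{b}_{d,s})+J_w(\vo{a}_{d,w},\vo{b}_{d,w})$, a sum of two nonnegative terms, the first depending only on the symmetric parts and the second only on the skew-symmetric parts.

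Next I would analyze the constraints. Each column of $\vo{X}_d$ in \eqn{structureX} and of $\vo{Y}_d$ in \eqn{structureY} is a scalar multiple of an elementwise power $\vo{m}^k$, and since $\vo{J}\vo{m}=-\vo{m}$ we have $\vo{J}\vo{m}^k=(-1)^k\vo{m}^k$, i.e. $\vo{m}^k$ is symmetric for even $k$ and skew-symmetric for odd $k$. Hence $\vo{a}_d^T\vo{m}^k=\vo{a}_{d,s}^T\vo{m}^k$ for even $k$ and $=\vo{a}_{d,w}^T\vo{m}^k$ for odd $k$. The crucial point — and where $d$ being even enters — is that in the accuracy constraints $\sum_m\big(\tfrac{m^{d+r}}{(d+r)!}\mathrm{a}_m-\tfrac{m^r}{r!}\mathrm{b}_m\big)=0$ the exponents $d+r$ and $r$ have the same parity, so each such constraint involves either only $(\vo{a}_{d,s},\vo{b}_{d,s})$ (for even $r$) or only $(\vo{a}_{d,w},\vo{b}_{d,w})$ (for odd $r$); the same holds for $\sum_m\mathrm{a}_m m^j=0$. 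Moreover the normalization $\mathrm{b}_0=1$ involves only $\vo{b}_{d,s}$, since the central entry of any skew-symmetric vector of odd length is zero. The skew-symmetric block of constraints is therefore homogeneous.

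With the decoupling established, the skew-symmetric sub-problem is: minimize $J_w(\vo{a}_{d,w},\vo{b}_{d,w})\ge 0$ over the linear subspace cut out by the homogeneous constraints; this subspace contains the origin, at which $J_w=0$, so $(\vo{a}_{d,w}^\ast,\vo{b}_{d,w}^\ast)=(\vo{0},\vo{0})$ is optimal. Consequently the pair obtained by solving only the symmetric sub-problem and appending zero skew parts is feasible and attains the global minimum of \eqn{evenOptim}; since the KKT system \eqn{padeEvenOptimal} was assumed to have a unique solution, this pair \emph{is} the optimum, so $\vo{a}_d^\ast$ and $\vo{b}_d^\ast$ are symmetric. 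Equivalently, one can argue by symmetry: the map $(\vo{a}_d,\vo{b}_d)\mapsto(\vo{J}\vo{a}_d,\vo{J}\vo{b}_d)$ leaves both the objective and — because $d$ is even — the feasible set of \eqn{evenOptim} invariant, so the unique minimizer must be a fixed point of $\vo{J}$, i.e. symmetric.

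The main obstacle I anticipate is the bookkeeping in the second paragraph: one must verify that every accuracy-constraint row, and the normalization, respects the symmetric/skew-symmetric splitting, and isolate precisely where the hypothesis "$d$ even" is used (it is needed so that $d+r\equiv r\pmod 2$, which keeps the $\vo{a}_d$ and $\vo{b}_d$ contributions in each accuracy constraint in the same parity class). A secondary subtlety is the appeal to uniqueness of the KKT solution; if one prefers not to invoke it, one can instead show directly that $J_w(\vo{a}_{d,w},\vo{b}_{d,w})=0$ forces $\vo{a}_{d,w}=\vo{b}_{d,w}=\vo{0}$, using linear independence of $\{\sin(m\kdx)\}_{m=1}^{\npt}$ on $[0,\pi]$ and the fact that a nonzero ratio of trigonometric polynomials cannot equal $\pm\kdx^d$ on an interval.
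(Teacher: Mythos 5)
Your argument is correct, and it reaches the conclusion by a somewhat different organization than the paper. The paper's proof works with the map $(\vo{a}_d,\vo{b}_d)\mapsto(\vo{J}\vo{a}_d,\vo{J}\vo{b}_d)$: it shows the integrand in the relaxed cost is invariant under this map (via \eqn{evenCJSJ}), invokes uniqueness of the solution of the KKT system \eqn{padeEvenOptimal} to conclude $\vo{J}\vo{a}_d^\ast=\vo{a}_d^\ast$, $\vo{J}\vo{b}_d^\ast=\vo{b}_d^\ast$, and then separately verifies that the map preserves the order-of-accuracy constraints using the parity of the columns of $\vo{X}_d$ and $\vo{Y}_d$ — exactly the observation you state in your closing ``equivalently'' sentence, which is therefore the paper's proof in compressed form. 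Your primary route instead decomposes $(\vo{a}_d,\vo{b}_d)$ into symmetric and skew-symmetric parts, uses $\vo{C}^T(\kdx)\vo{a}_{d,w}=0$, $\vo{S}^T(\kdx)\vo{a}_{d,s}=0$ to split the objective into $J_s+J_w$ with no cross terms, and checks that for even $d$ every constraint row (including the normalization $\mathrm{b}_0=1$, whose skew part is automatically zero) lands in a single parity class, so the skew block of constraints is homogeneous and $J_w$ is minimized by zero; uniqueness then forces the skew parts of the optimum to vanish. The two arguments rest on the same structural facts (the action of $\vo{J}$ on $\vo{C}$, $\vo{S}$, and $\vo{m}^k$, and the role of $d+r\equiv r \pmod 2$), and both lean on uniqueness of the minimizer, but your decoupling version makes explicit which sub-problem the symmetry kills and — as you note — offers a path to drop the uniqueness hypothesis by showing $J_w=0$ forces the skew parts to vanish (a point the paper does not address); the paper's version is shorter and transfers verbatim, with the sign change $(\vo{a}_d,\vo{b}_d)\mapsto(-\vo{J}\vo{a}_d,\vo{J}\vo{b}_d)$, to the odd-derivative case of Lemma \ref{lem:oddRealZero}. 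Your correct identification that evenness of $d$ is needed only for invariance of the feasible set (the objective with the even-$d$ $\vo{Q}_d$ is invariant regardless) is consistent with this.
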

\begin{proof}
	Upper bound on the norm of spectral error for an even derivative is given by \eqn{normRelax1}
	\begin{align}
	\Ltwo{e(\kdx)} \leq \left\langle\left(\vo{C}^T(\kdx)\vo{a}_d - (-1)^{q}\kdx^{d}\vo{C}^T(\kdx)\vo{b}_d\right)^2 +  \left(\vo{S}^T(\kdx)\vo{a}_d-(-1)^{q}\kdx^{d}\vo{S}^T(\kdx)\vo{b}_d\right)^2\right\rangle / \epsilon^2.
   \eqnlabel{appAnormbound}
	\end{align}
	Now, let us consider the integrand function in \eqn{appAnormbound}
	\begin{eqnarray}
	f(\vo{a}_d,\vo{b}_d) = \wtfun \left[ \left(\vo{C}^T(\kdx)\vo{a}_d - (-1)^{q}\kdx^{d}\vo{C}^T(\kdx)\vo{b}_d\right)^2 +  \left(\vo{S}^T(\kdx)\vo{a}_d-(-1)^{q}\kdx^{d}\vo{S}^T(\kdx)\vo{b}_d\right)^2 \right].
	\eqnlabel{fadbd}
	\end{eqnarray}
	After substituting $(\vo{J}\vo{a}_d,\vo{J}\vo{b}_d)$ in \eqn{fadbd} we get,
	\begin{eqnarray}
	f(\vo{J}\vo{a}_d,\vo{J}\vo{b}_d) = \wtfun \left[ \left(\vo{C}^T(\kdx)\vo{J}\vo{a}_d - (-1)^{q}\kdx^{d}\vo{C}^T(\kdx)\vo{J}\vo{b}_d\right)^2 + \left(\vo{S}^T(\kdx)\vo{J}\vo{a}_d-(-1)^{q}\kdx^{d}\vo{S}^T(\kdx)\vo{J}\vo{b}_d\right)^2 \right].
	\eqnlabel{fJadJbd}
	\end{eqnarray}
	Clearly from \eqn{evenCJSJ},
	\begin{eqnarray}
	f(\vo{J}\vo{a}_d,\vo{J}\vo{b}_d) = f(\vo{a}_d,\vo{b}_d).
	\eqnlabel{fprop}
	\end{eqnarray}
	This is a property of the function $f(\vo{a}_d,\vo{b}_d)$ and holds true for any feasible pair $(\vo{a}_d,\vo{b}_d)$.
	Let $(\vo{a}_d^*,\vo{b}_d^*)$ be an optimal pair which minimizes the norm. It immediately follows from \eqn{fprop},
	\begin{align*}
	f(\vo{J}\vo{a}_d^*,\vo{J}\vo{b}_d^*) = f(\vo{a}_d^*,\vo{b}_d^*).
	\end{align*}
	For $(\vo{a}_d^*,\vo{b}_d^*)$ being a unique solution which minimizes the norm as per equation (\ref{eqn:padeEvenOptimal}), it follows that,
	\begin{eqnarray}
	\vo{J}\vo{a}_d^* = \vo{a}_d^* \quad \text{and} \quad \vo{J}\vo{b}_d^* = \vo{b}_d^*.
	\eqnlabel{adbdSymm}
	\end{eqnarray}
	By Definition \ref{defn:symmVec}, both  $\vo{a}_d^*$ and $\vo{b}_d^*$ are symmetric about the central element.
	To show that this solution satisfies order of accuracy constraints, consider \eqn{linPade},
	\begin{eqnarray}
	\vo{a}_d^T\vo{X}_d - \vo{b}_d^T\vo{Y}_d = \vo{0}_{1\times (d+p+1)} .
	\eqnlabel{orderConstraint}
	\end{eqnarray}
	Substitute $(\vo{a}_d,\vo{b}_d)$ with $(\vo{J}\vo{a}_d, \vo{J}\vo{b}_d)$ to get,
	\begin{align*}
	\vo{a}_d^T\vo{J}^T\vo{X}_d - \vo{b}_d^T\vo{J}^T\vo{Y}_d = \vo{0}_{1\times (d+p+1)} .
	\end{align*}
	Note that, $\vo{J}^T=\vo{J}$, and it operates on the columns of $\vo{X}_d$ and $\vo{Y}_d$. Let $\vo{X}_i$ and $\vo{Y}_i$  be $i^\text{th}$ columns of $\vo{X}_d$ and $\vo{Y}_d$. From equations \eqn{structureX} and \eqn{structureY}, it is evident that $\vo{X}_i$ and $\vo{Y}_i$ are symmetric about central element for odd $i$. It implies that $\vo{a}_d^T\vo{J}\vo{X}_i = \vo{a}_d^T\vo{X}_i$ and $\vo{b}_d^T\vo{J}\vo{Y}_i = \vo{b}_d^T\vo{Y}_i$. Clearly, constraint equation \eqn{orderConstraint} is satisfied for odd columns of $\vo{X}_d$ and $\vo{Y}_d$. Similarly, for even $i$, it can be shown that $\vo{a}_d^T\vo{J}\vo{X}_i = -\vo{a}_d^T\vo{X}_i$ and $\vo{b}_d^T\vo{J}\vo{Y}_i = -\vo{b}_d^T\vo{Y}_i$, which again satisfies \eqn{orderConstraint} since right hand side is zero. Hence, it can be concluded that, if there is exists a pair $(\vo{a}_d, \vo{b}_d)$ which satisfies the order constraint then $(\vo{J}\vo{a}_d, \vo{J}\vo{b}_d)$ also satisfies the constraint given by \eqn{orderConstraint}. Therefore, $(\vo{a}_d^*, \vo{b}_d^*)$ which satisfy \eqn{adbdSymm} is a feasible solution for \eqn{orderConstraint}.
\end{proof}
The symmetry of coefficients has a compelling effect on the spectral error, which is discussed below.
\begin{cor}
	\label{cor:evenImgZero}
	For even derivatives, imaginary component of the spectral error is zero.
\end{cor}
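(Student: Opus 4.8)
The plan is to substitute the symmetry property established in Lemma~\ref{lem:evenImgZero} directly into the expression \eqn{padeWave} for $e(\kdx)$ and show that the numerator and denominator are both purely real. First I would recall the remark made right after Definition~\ref{defn:symmVec}: any symmetric vector is orthogonal to any skew-symmetric vector. Since $\vo{S}(\kdx)$ is skew-symmetric about its central element by \eqn{evenCJSJ}, and both $\vo{a}_d^*$ and $\vo{b}_d^*$ are symmetric about the central element by Lemma~\ref{lem:evenImgZero}, it follows that $\vo{S}^T(\kdx)\vo{a}_d^* = 0$ and $\vo{S}^T(\kdx)\vo{b}_d^* = 0$ for every $\kdx$. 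Hence $\big(\vo{C}^T(\kdx)+j\vo{S}^T(\kdx)\big)\vo{a}_d^* = \vo{C}^T(\kdx)\vo{a}_d^*$ and $\big(\vo{C}^T(\kdx)+j\vo{S}^T(\kdx)\big)\vo{b}_d^* = \vo{C}^T(\kdx)\vo{b}_d^*$, both real.

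Next I would use the parity of $d$: for an even derivative $d=2q$ we have $(j\kdx)^d = j^{2q}\kdx^{2q} = (-1)^q\kdx^{d}$, which is real. Combining this with the previous step, the numerator of \eqn{padeWave} reduces to the real quantity $\vo{C}^T(\kdx)\vo{a}_d^* - (-1)^q\kdx^d\,\vo{C}^T(\kdx)\vo{b}_d^*$ and the denominator to the real quantity $\vo{C}^T(\kdx)\vo{b}_d^*$, which is nonzero (this is exactly what the constraint $\mathrm{b}_0=1$ of \eqn{nonSingular} and the $\bar{g}(\kdx)g(\kdx)\geq\epsilon^2$ argument preceding it guarantee). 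Therefore $e(\kdx)$ is a ratio of two real numbers, so its imaginary component vanishes for all $\kdx\in[0,\pi]$.

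Equivalently, I could phrase the argument through the modified wavenumber of \eqn{defmkdx}: with $\vo{S}^T(\kdx)\vo{a}_d^* = \vo{S}^T(\kdx)\vo{b}_d^* = 0$ the defining relation for $(j\mkdx)^d$ becomes $(j\mkdx)^d = \vo{C}^T(\kdx)\vo{a}_d^*/\vo{C}^T(\kdx)\vo{b}_d^*\in\real$, whence $e(\kdx) = j^d(\mkdx^d-\kdx^d) = (-1)^q(\mkdx^d-\kdx^d)$ is real. I do not expect a genuine obstacle here; the statement is a short corollary of Lemma~\ref{lem:evenImgZero}, and the only point needing care is the justification that the skew-symmetric vector $\vo{S}(\kdx)$ annihilates the symmetric optimal coefficient vectors — which is precisely the symmetric/skew-symmetric orthogonality noted after Definition~\ref{defn:symmVec} — together with the elementary observation that $(j\kdx)^d$ is real when $d$ is even.
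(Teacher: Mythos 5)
Your proposal is correct and follows essentially the same route as the paper: both reduce to showing $\vo{S}^T(\kdx)\vo{a}_d^* = \vo{S}^T(\kdx)\vo{b}_d^* = 0$ (the paper multiplies the symmetry relations by $\vo{S}^T(\kdx)$ and uses \eqn{evenCJSJ}, which is just the computation underlying your symmetric/skew-symmetric orthogonality remark) and then observe that \eqn{padeWave} collapses to the real expression $\vo{C}^T(\kdx)\vo{a}_d^*/\vo{C}^T(\kdx)\vo{b}_d^* - (-1)^q\kdx^d$. Your added remark about the denominator being nonzero via the $\mathrm{b}_0$ constraint is a harmless extra precaution the paper leaves implicit.
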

\begin{proof}
	Multiplying \eqn{adbdSymm} by $\vo{S}^T(\kdx)$ and using \eqn{evenCJSJ}, we get,
	\begin{eqnarray}
	  \vo{S}^T(\kdx)\vo{a}_d^* = 0 \quad \text{and} \quad \vo{S}^T(\kdx)\vo{b}_d^* = 0.
		\eqnlabel{evenSadSbdZero}
	\end{eqnarray}
 Thus, spectral error from \eqn{padeWave} becomes,
	\begin{align*}
	  e(\kdx) = \frac{\vo{C}^T(\kdx)\vo{a}_d^*}{\vo{C}^T(\kdx)\vo{b}_d^*} - (-1)^q\kdx^d.
	\end{align*}
	Therefore, for even derivatives, optimal coefficients make imaginary component of the spectral error $e(\kdx)$ zero.
\end{proof}

\Fig{stencilCoeffD2} shows the solution of the optimization problem \eqn{evenOptim} for approximating second derivative with fourth order accuracy, for various stencil sizes.
As shown in Lemma \ref{lem:evenImgZero}, both $\vo{a}_d^*$ and $\vo{b}_d^*$ are symmetric about the central element. Numerical values of the coefficients are tabulated in Table \ref{table:stencilCoeffD2} in Appendix \ref{sec:appOptimCoeff}.

Note that, for $\npt=1$, there is no degree of freedom in the system for optimization. Consequently, for this case we recover the non-optimized standard scheme. In other words, \ofdC{1}{1}{1}{1}{4} is identical to the non-optimized \sfdC{1}{1}{1}{1}{4}.

\begin{figure}[ht!]
\begin{center}
\includegraphics[width=\textwidth,]{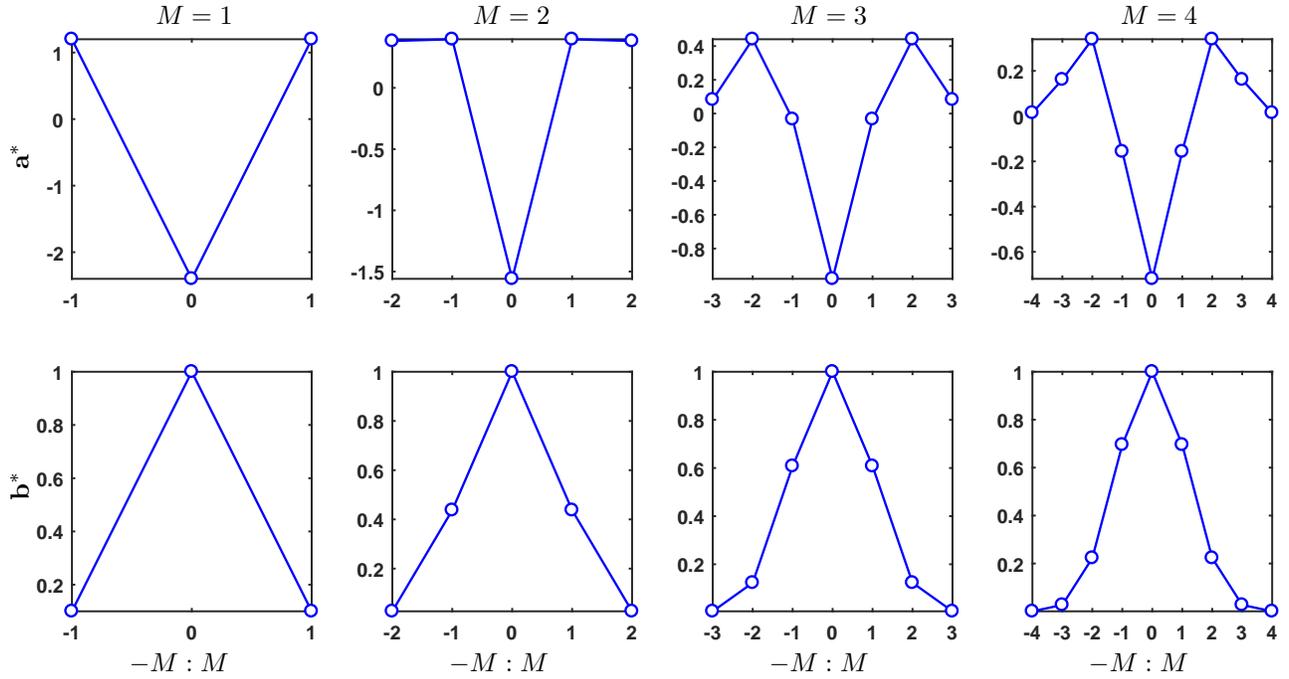} %
\begin{picture}(0,0)
        \put(-245,190){\rotatebox{90} {$\vo{a}^*$}}
        \put(-245,65){\rotatebox{90} {$\vo{b}^*$}}
				\put(-190,245){$\npt = 1$}
				\put(-70,245){$\npt = 2$}
        \put(50,245){$\npt = 3$}
        \put(175,245){$\npt = 4$}
				\put(-200,0){$-\npt:\npt$}
				\put(-79,0){$-\npt:\npt$}
        \put(42,0){$-\npt:\npt$}
        \put(163,0){$-\npt:\npt$}
\end{picture}
\caption{Optimal stencil coefficients for \ofdC{\npt}{\npt}{\npt}{\npt}{4} approximating the second derivative. $\gamma(\kdx) = 1$ for $\kdx\in[0, 3]$, and $\gamma(\kdx) = 0$ otherwise.}
\figlabel{stencilCoeffD2}
\end{center}
\end{figure}

The first row of \fig{implicitspectralError2} shows the comparison of $\mkdx^2$ (defined in \eqn{defmkdx}) for different values of $\npt$ (blue), with $\kdx^2$ which is calculated analytically (black dashed). In all subplots, numerical results obtained for \sfdC{1}{1}{1}{1}{4} (red) are also shown for comparison.
It is evident that, $\mkdx^2$ for optimized schemes is closer to analytical $\kdx^2$ than the standard scheme.

Nature of the spectral error can be further studied by analyzing real and imaginary components of $\err$, which are shown in bottom two rows of \fig{implicitspectralError2}. For a better visualization, absolute value of the real component $\Re[e(\kdx)]$ is plotted in log scale.
Clearly, optimized schemes (blue) have better spectral accuracy than the non-optimized standard scheme (red), especially at higher wavenumbers.
The real component $\Re[e(\kdx)]$ is significant for optimized schemes as well. However, it decreases as $\npt$ is increased.

In accordance with Corollary \ref{cor:evenImgZero}, we observe that the imaginary component $\Im[e(\kdx)]$ is zero for the second derivative.

\begin{figure}[ht!]
\begin{center}
\includegraphics[width=\textwidth]{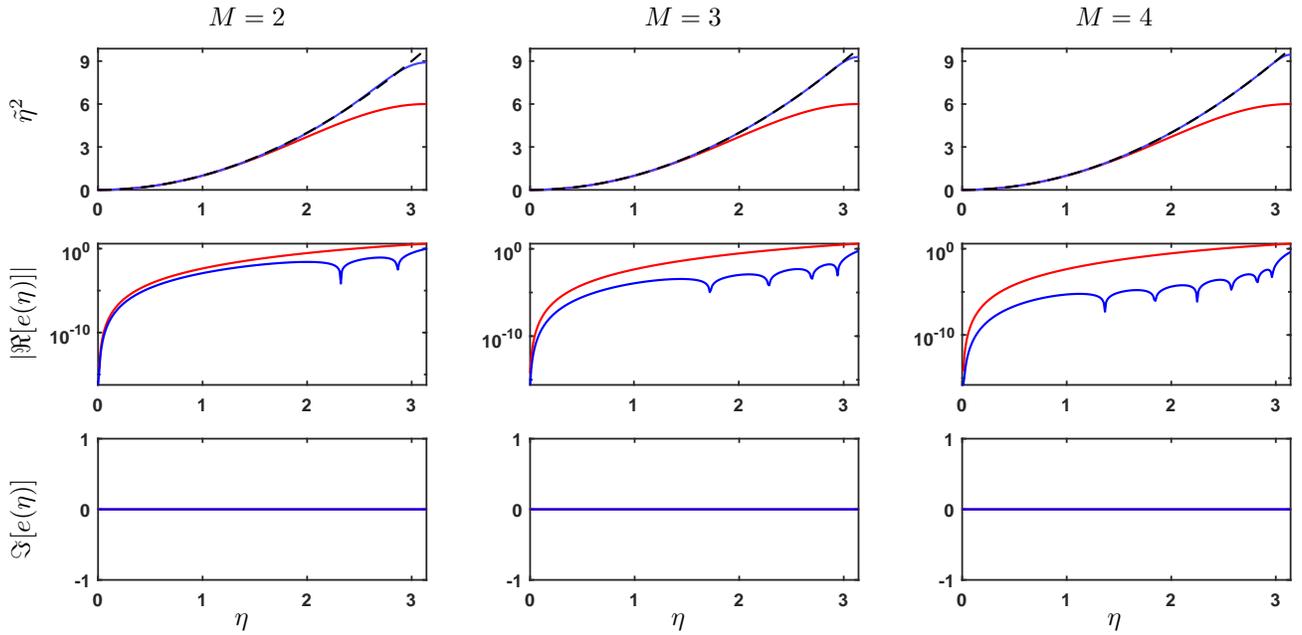}
\begin{picture}(0,0)
        \put(-250,195){\rotatebox{90} {$\mkdx ^2$}}
				\put(-250,105){\rotatebox{90} {$| \Re[e(\kdx)] |$}}
				\put(-250,30){\rotatebox{90} {$\Im[e(\kdx)]$}}
				\put(-165,5){$\kdx$}
				\put(0,5){$\kdx$}
				\put(165,5){$\kdx$}
				\put(-175,232){$\npt = 2$}
				\put(-10,232){$\npt = 3$}
				\put(153,232){$\npt = 4$}
\end{picture}
\caption{Modified wavenumber, and real and imaginary components of spectral error for \ofdC{\npt}{\npt}{\npt}{\npt}{4} (blue) approximating the second derivative. $\gamma(\kdx) = 1$ for $\kdx\in[0, 3]$, and $\gamma(\kdx) = 0$ otherwise. While analytical $\kdx^2$ is shown by black dashed line, red solid line shows the results for \sfdC{1}{1}{1}{1}{4}.}
\figlabel{implicitspectralError2}
\end{center}
\end{figure}

The overall effect of changing the stencil size (or $\npt$), on the spectral error is illustrated in  \fig{implicitSecondDerivativeL2error}. For a fixed order of accuracy, larger $\npt$ provides more freedom, and hence bigger scope of minimizing the spectral error in the optimization problem.
Therefore, as expected, the spectral error decreases with increase in $\npt$.

\begin{figure}[ht!]
\begin{center}
\includegraphics[width=0.5\textwidth]{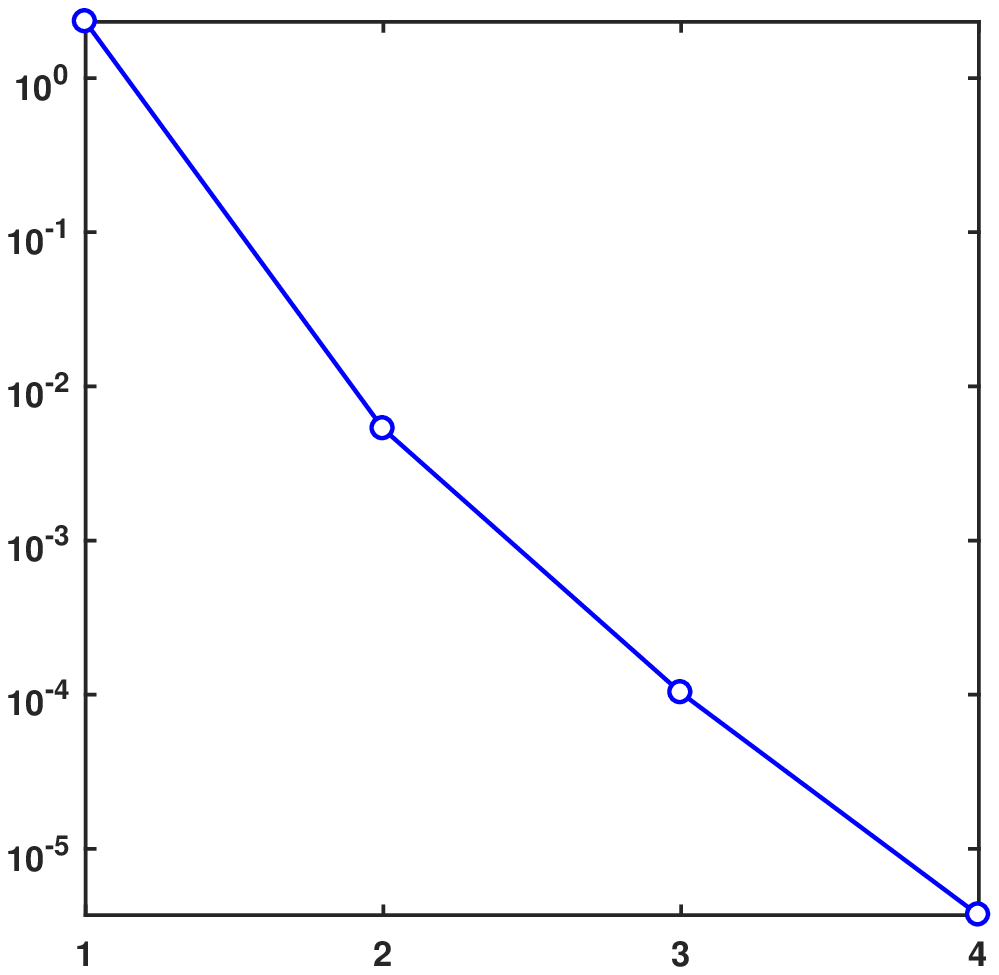}
\begin{picture}(0,0)
        \put(-255,100){\rotatebox{90} {$\|e(\kdx)\|_{\mathcal{L}_2}^2$}}
				\put(-120,-10){$\npt$}
\end{picture}
\caption{${\mathcal{L}_2}$ norm of spectral error for \ofdC{\npt}{\npt}{\npt}{\npt}{4} approximating the second derivative. $\gamma(\kdx) = 1$ for $\kdx\in[0, 3]$, and $\gamma(\kdx) = 0$ otherwise.}
\figlabel{implicitSecondDerivativeL2error}
\end{center}
\end{figure}

\subsection{Odd derivatives}
For odd $d$, i.e. $d:=2q+1$, for $q=\{0, 1, 2,  \cdots\}$, the spectral error from \eqn{padeWave} is
\begin{align*}
e(\kdx) = \frac{\left(\vo{C}^T(\kdx)\vo{a}_d + (-1)^{q}\kdx^{d}\vo{S}^T(\kdx)\vo{b}_d\right) + j\left(\vo{S}^T(\kdx)\vo{a}_d- (-1)^{q}\kdx^{d}\vo{C}^T(\kdx)\vo{b}_d\right)}{\left(\vo{C}^T(\kdx)+j\vo{S}^T(\kdx)\right)\vo{b}_d}.
\end{align*}
Using arguments similar to the case for even $d$, we can minimize $\Ltwo{e(\kdx)}$ by minimizing
\begin{align*}
& \Ltwo{\left(\vo{C}^T(\kdx)\vo{a}_d + (-1)^{q}\kdx^{d}\vo{S}^T(\kdx)\vo{b}_d\right) + j\left(\vo{S}^T(\kdx)\vo{a}_d- (-1)^{q}\kdx^{d}\vo{C}^T(\kdx)\vo{b}_d\right)},\\
&=  \begin{pmatrix}\vo{a}_d\\\vo{b}_d\end{pmatrix}^T\vo{Q}_d\begin{pmatrix}\vo{a}_d\\\vo{b}_d\end{pmatrix},
\end{align*}
where
\begin{align}
\vo{Q}_d = \left\langle\begin{bmatrix}\vo{C}(\kdx)\\(-1)^q\kdx^d\vo{S}(\kdx)\end{bmatrix}\begin{bmatrix}\vo{C}(\kdx)\\(-1)^q\kdx^d\vo{S}(\kdx)\end{bmatrix}^T + \begin{bmatrix}\vo{S}(\kdx)\\-(-1)^q\kdx^d\vo{C}(\kdx)\end{bmatrix}\begin{bmatrix}\vo{S}(\kdx)\\-(-1)^q\kdx^d\vo{C}(\kdx)\end{bmatrix}^T\right\rangle.
\eqnlabel{oddQd}
\end{align}
The optimization problem is the same as \eqn{evenOptim}, with $\vo{Q}_d$ defined by \eqn{oddQd}, and the analytical solution is given by \eqn{padeEvenOptimal}.

Symmetry or skew-symmetry of optimal coefficients obtained for odd derivatives is justified by the following lemma. And its effect on the spectral error is discussed in Corollary \ref{cor:oddRealZero}.
\begin{lem}
	\label{lem:oddRealZero}
	In case of odd derivatives, optimal coefficients, $\vo{a}_d^*$ and $\vo{b}_d^*$ are respectively, skew-symmetric and symmetric about the central element.
\end{lem}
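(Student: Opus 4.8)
The plan is to repeat, almost verbatim, the argument used for Lemma \ref{lem:evenImgZero}, but with the involution $(\vo{a}_d,\vo{b}_d)\mapsto(-\vo{J}\vo{a}_d,\vo{J}\vo{b}_d)$ replacing $(\vo{a}_d,\vo{b}_d)\mapsto(\vo{J}\vo{a}_d,\vo{J}\vo{b}_d)$; the extra sign on $\vo{a}_d$ is precisely what will force $\vo{a}_d^*$ to be skew-symmetric rather than symmetric. First I would record the relaxed bound for odd $d$ exactly as in the even case: with $\mathrm{b}_0=1$ ensuring $\bar{g}(\kdx)g(\kdx)\geq\epsilon^2$, Lemma \ref{lem:normRelaxLemma} gives $\Ltwo{e(\kdx)}\leq\big\langle(\vo{C}^T(\kdx)\vo{a}_d+(-1)^q\kdx^d\vo{S}^T(\kdx)\vo{b}_d)^2+(\vo{S}^T(\kdx)\vo{a}_d-(-1)^q\kdx^d\vo{C}^T(\kdx)\vo{b}_d)^2\big\rangle/\epsilon^2$, whose integrand I call $f(\vo{a}_d,\vo{b}_d)$; this is the quadratic form with $\vo{Q}_d$ from \eqn{oddQd}, and the optimal pair is the unique solution of the linear KKT system \eqn{padeEvenOptimal}.

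Next I would substitute $(-\vo{J}\vo{a}_d,\vo{J}\vo{b}_d)$ into $f$ and invoke the symmetry relations \eqn{evenCJSJ}, namely $\vo{C}^T(\kdx)\vo{J}=\vo{C}^T(\kdx)$ and $\vo{S}^T(\kdx)\vo{J}=-\vo{S}^T(\kdx)$. The four building blocks are $\vo{C}^T(\kdx)(-\vo{J}\vo{a}_d)=-\vo{C}^T(\kdx)\vo{a}_d$, $\vo{S}^T(\kdx)(-\vo{J}\vo{a}_d)=\vo{S}^T(\kdx)\vo{a}_d$, $\vo{C}^T(\kdx)(\vo{J}\vo{b}_d)=\vo{C}^T(\kdx)\vo{b}_d$, and $\vo{S}^T(\kdx)(\vo{J}\vo{b}_d)=-\vo{S}^T(\kdx)\vo{b}_d$. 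Hence the first squared bracket in $f$ acquires an overall minus sign (and so is unchanged under squaring) while the second bracket is untouched, giving $f(-\vo{J}\vo{a}_d,\vo{J}\vo{b}_d)=f(\vo{a}_d,\vo{b}_d)$ for every feasible pair. Since \eqn{padeEvenOptimal} characterizes the minimizer as a unique point, it must be a fixed point of this involution: $-\vo{J}\vo{a}_d^*=\vo{a}_d^*$ and $\vo{J}\vo{b}_d^*=\vo{b}_d^*$, i.e., by Definition \ref{defn:symmVec}, $\vo{a}_d^*$ is skew-symmetric and $\vo{b}_d^*$ is symmetric about the central element.

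The remaining step is to check that the involution preserves feasibility, so that the fixed-point argument is legitimate. For the normalization $\bdel_N^T(\npt+1)\vo{b}_d=1$ this is immediate because $\vo{J}$ fixes the central entry. For the order constraint \eqn{linPade} I would note that $\vo{J}$ acts on the columns of $\vo{X}_d$ and $\vo{Y}_d$: from \eqn{structureX} and \eqn{structureY} the $i$-th column of $\vo{X}_d$ is proportional to $\vo{m}^{i-1}$ and the $i$-th column of $\vo{Y}_d$ is proportional to $\vo{m}^{i-d-1}$ (or zero), and since $\vo{J}\vo{m}=-\vo{m}$ we get $\vo{J}\vo{X}_i=(-1)^{i-1}\vo{X}_i$ and $\vo{J}\vo{Y}_i=(-1)^{i-d-1}\vo{Y}_i$. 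Because $d$ is odd, $(-1)^{i-d-1}=-(-1)^{i-1}$, so column by column $(-\vo{J}\vo{a}_d)^T\vo{X}_i-(\vo{J}\vo{b}_d)^T\vo{Y}_i=-(-1)^{i-1}\big(\vo{a}_d^T\vo{X}_i-\vo{b}_d^T\vo{Y}_i\big)=0$, and feasibility is preserved.

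The main obstacle here is nothing deep — it is sign bookkeeping: selecting the correct involution $(-\vo{J}\vo{a}_d,\vo{J}\vo{b}_d)$ in the first place, and then checking that the parity shift caused by the $\vo{m}^d$ factor appearing in $\vo{X}_d$ but not in $\vo{Y}_d$ (which is exactly where the oddness of $d$ enters) makes the two halves of the order-of-accuracy constraint transform compatibly. Once the signs are tracked consistently, the proof is identical in structure to that of Lemma \ref{lem:evenImgZero}. As in that case, a corollary then follows: multiplying $\vo{J}\vo{a}_d^*=-\vo{a}_d^*$ and $\vo{J}\vo{b}_d^*=\vo{b}_d^*$ by $\vo{C}^T(\kdx)$ and $\vo{S}^T(\kdx)$ and using \eqn{evenCJSJ} gives $\vo{C}^T(\kdx)\vo{a}_d^*=0$ and $\vo{S}^T(\kdx)\vo{b}_d^*=0$, so the real part of the spectral error vanishes for odd derivatives.
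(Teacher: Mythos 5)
Your proof is correct and follows essentially the same route as the paper: the involution $(\vo{a}_d,\vo{b}_d)\mapsto(-\vo{J}\vo{a}_d,\vo{J}\vo{b}_d)$, invariance of the integrand via \eqn{evenCJSJ}, uniqueness of the KKT solution forcing the fixed point, and a column-parity check of \eqn{linPade} to confirm feasibility. The only difference is that you spell out the sign bookkeeping ($\vo{J}\vo{X}_i=(-1)^{i-1}\vo{X}_i$, $\vo{J}\vo{Y}_i=(-1)^{i-d-1}\vo{Y}_i$ with odd $d$ flipping the relative sign) that the paper dismisses as ``similar arguments,'' which is a welcome clarification rather than a deviation.
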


\begin{proof}
	For odd derivatives, the integrand function is
	\begin{align*}
	g(\vo{a}_d,\vo{b}_d) = \wtfun \left[ \left(\vo{C}^T(\kdx)\vo{a}_d + (-1)^{q}\kdx^{d}\vo{S}^T(\kdx)\vo{b}_d\right)^2 + \left(\vo{S}^T(\kdx)\vo{a}_d- (-1)^{q}\kdx^{d}\vo{C}^T(\kdx)\vo{b}_d\right)^2 \right].
	\end{align*}
	Using similar arguments as for the case of even derivatives, it can be easily shown that,
	\begin{align*}
	g(-\vo{J}\vo{a}_d,\vo{J}\vo{b}_d) = g(\vo{a}_d,\vo{b}_d).
	\end{align*}
	If $(\vo{a}_d^*,\vo{b}_d^*)$ is the unique optimal solution, then
	\begin{eqnarray}
	-\vo{J}\vo{a}_d^* = \vo{a}_d^* \quad \text{and} \quad \vo{J}\vo{b}_d^* = \vo{b}_d^*.
	\eqnlabel{adSkewbdSymm}
	\end{eqnarray}
	By definition, $\vo{a}_d^*$ and $\vo{b}_d^*$ are respectively skew-symmetric and symmetric about the central element.
	To show that this solution satisfies order of accuracy constraints, consider \eqn{orderConstraint}. Now, substitute $(\vo{a}_d,\vo{b}_d)$ with $(-\vo{J}\vo{a}_d, \vo{J}\vo{b}_d)$ to get,
	\begin{align*}
	-\vo{a}_d^T\vo{J}^T\vo{X}_d - \vo{b}_d^T\vo{J}^T\vo{Y}_d = \vo{0}_{1\times (d+p+1)}.
	\end{align*}
	Using the similar arguments as for the case of even derivative given in Lemma (\ref{lem:evenImgZero}), it can be shown that if $(\vo{a}_d,\vo{b}_d)$ satisfies \eqn{orderConstraint}, then $(-\vo{J}\vo{a}_d, \vo{J}\vo{b}_d)$ also satisfies the same constraint for odd $d$.
	Therefore, $(\vo{a}_d^*, \vo{b}_d^*)$ which satisfy \eqn{adSkewbdSymm} is a feasible solution for \eqn{orderConstraint}.
\end{proof}
\begin{cor}
	For odd derivatives, real component of the spectral error is zero.
		\label{cor:oddRealZero}
\end{cor}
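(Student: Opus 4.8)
The plan is to mirror the argument of Corollary \ref{cor:evenImgZero}, now using the parity of the optimal coefficients for odd derivatives established in Lemma \ref{lem:oddRealZero} together with the structural relations \eqn{evenCJSJ}.

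First I would take the identities $-\vo{J}\vo{a}_d^* = \vo{a}_d^*$ and $\vo{J}\vo{b}_d^* = \vo{b}_d^*$ from \eqn{adSkewbdSymm} and multiply them on the left by $\vo{C}^T(\kdx)$ and $\vo{S}^T(\kdx)$, respectively. Since $\vo{C}^T(\kdx)\vo{J} = \vo{C}^T(\kdx)$, the first gives $\vo{C}^T(\kdx)\vo{a}_d^* = -\vo{C}^T(\kdx)\vo{a}_d^*$, hence $\vo{C}^T(\kdx)\vo{a}_d^* = 0$; since $\vo{S}^T(\kdx)\vo{J} = -\vo{S}^T(\kdx)$, the second gives $\vo{S}^T(\kdx)\vo{b}_d^* = -\vo{S}^T(\kdx)\vo{b}_d^*$, hence $\vo{S}^T(\kdx)\vo{b}_d^* = 0$.

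Next I would substitute these two vanishing inner products into the odd-derivative expression for $e(\kdx)$ stated at the start of this subsection. The real part of its numerator, $\vo{C}^T(\kdx)\vo{a}_d^* + (-1)^{q}\kdx^{d}\vo{S}^T(\kdx)\vo{b}_d^*$, vanishes term by term, while the denominator $(\vo{C}^T(\kdx) + j\vo{S}^T(\kdx))\vo{b}_d^*$ collapses to the purely real quantity $\vo{C}^T(\kdx)\vo{b}_d^*$. Hence
\begin{align*}
e(\kdx) = \frac{j\left(\vo{S}^T(\kdx)\vo{a}_d^* - (-1)^{q}\kdx^{d}\vo{C}^T(\kdx)\vo{b}_d^*\right)}{\vo{C}^T(\kdx)\vo{b}_d^*},
\end{align*}
which is purely imaginary, so $\Re[e(\kdx)] = 0$.

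I do not expect a genuine obstacle: this is essentially a one-line consequence of Lemma \ref{lem:oddRealZero}. The only point deserving a remark is that the final division is legitimate, i.e.\ $\vo{C}^T(\kdx)\vo{b}_d^* \neq 0$; this follows from the same non-singularity property $\bar{g}(\kdx)g(\kdx) \geq \epsilon^2 > 0$ secured by the constraint $\mathrm{b}_0 = 1$, which once $\vo{S}^T(\kdx)\vo{b}_d^* = 0$ is known forces $(\vo{C}^T(\kdx)\vo{b}_d^*)^2 = \bar{g}(\kdx)g(\kdx) > 0$.
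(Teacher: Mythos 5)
Your proposal is correct and follows essentially the same route as the paper: multiply the skew-symmetry/symmetry relations \eqn{adSkewbdSymm} by $\vo{C}^T(\kdx)$ and $\vo{S}^T(\kdx)$, use \eqn{evenCJSJ} to conclude $\vo{C}^T(\kdx)\vo{a}_d^* = 0$ and $\vo{S}^T(\kdx)\vo{b}_d^* = 0$, and substitute into the odd-derivative form of \eqn{padeWave} to see the error is purely imaginary. Your added remark that the division is legitimate because the constraint $\mathrm{b}_0 = 1$ guarantees $\bar{g}(\kdx)g(\kdx) \geq \epsilon^2 > 0$, hence $\vo{C}^T(\kdx)\vo{b}_d^* \neq 0$, is a small point the paper leaves implicit.
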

\begin{proof}
	Multiplying first sub-equation of \eqn{adSkewbdSymm} by $\vo{C}^T(\kdx)$ and the second by $\vo{S}^T(\kdx)$ and using \eqn{evenCJSJ}, we get,
	\begin{eqnarray}
	\vo{C}^T(\kdx)\vo{a}_d^* = 0 \quad \text{and} \quad \vo{S}^T(\kdx)\vo{b}_d^* = 0.
	\eqnlabel{oddCadSbdZero}
	\end{eqnarray}
Thus, spectral error from \eqn{padeWave} becomes,
	\begin{align*}
	e(\kdx) = j\bigg(\frac{\vo{S}^T(\kdx)\vo{a}_d^* }{\vo{C}^T(\kdx)\vo{b}_d^*} - (-1)^q\kdx^d \bigg).
	\end{align*}
Therefore, for odd derivatives, optimal coefficients make real component of the spectral error $e(\kdx)$ zero.
\end{proof}

\Fig{stencilCoeffD1} shows the solution of the optimization problem \eqn{evenOptim} for approximating first derivative with fourth order accuracy, for various stencil sizes.
As shown in Lemma \ref{lem:oddRealZero}, $\vo{a}_d^*$ and $\vo{b}_d^*$ are respectively, skew-symmetric and symmetric about the central element. Numerical values of the coefficients are tabulated in Table \ref{table:stencilCoeffD1}.
As in the case of second derivative, here also we recover the standard non-optimized scheme, \sfdC{1}{1}{1}{1}{4}, for $\npt=1$.

\begin{figure}[ht!]
\begin{center}
\includegraphics[width=\textwidth]{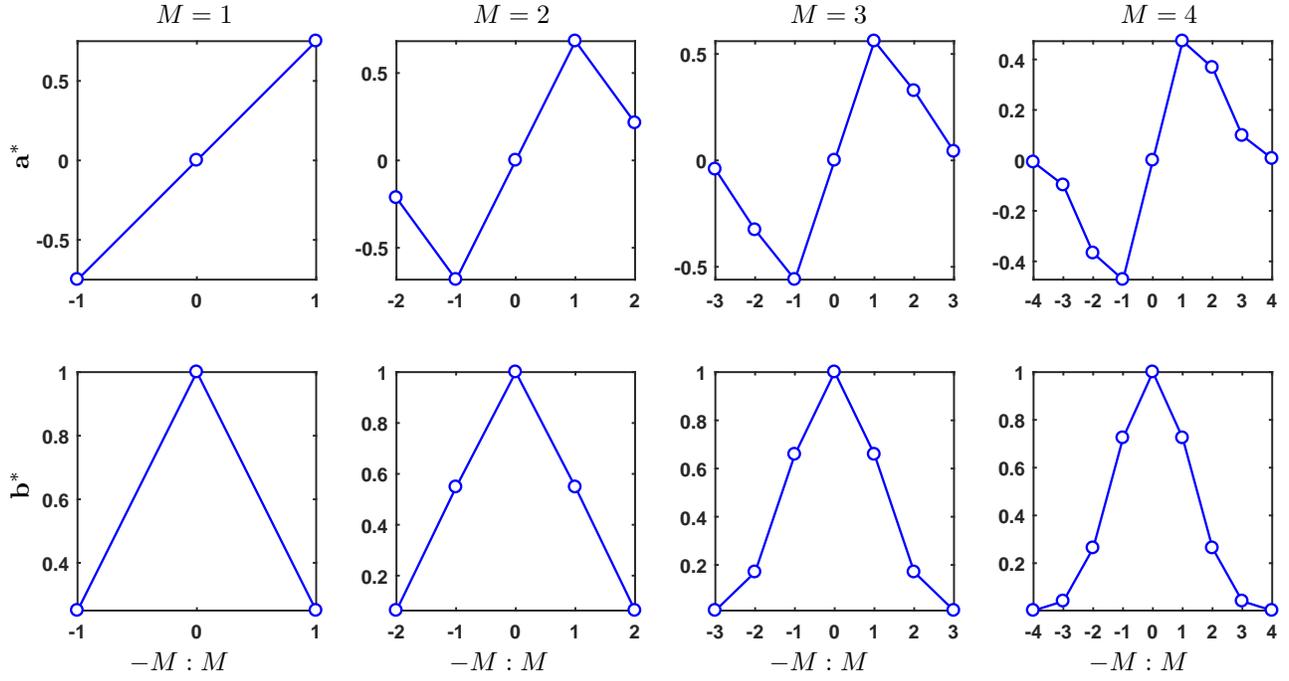} %
\begin{picture}(0,0)
    \put(-245,190){\rotatebox{90} {$\vo{a}^*$}}
    \put(-245,65){\rotatebox{90} {$\vo{b}^*$}}
    \put(-190,245){$\npt = 1$}
    \put(-70,245){$\npt = 2$}
    \put(50,245){$\npt = 3$}
    \put(175,245){$\npt = 4$}
    \put(-200,0){$-\npt:\npt$}
    \put(-79,0){$-\npt:\npt$}
    \put(42,0){$-\npt:\npt$}
    \put(163,0){$-\npt:\npt$}
\end{picture}
\caption{Optimal stencil coefficients for \ofdC{\npt}{\npt}{\npt}{\npt}{4} approximating the first derivative. $\gamma(\kdx) = 1$ for $\kdx\in[0, 3]$, and $\gamma(\kdx) = 0$ otherwise.}
\figlabel{stencilCoeffD1}
\end{center}
\end{figure}

Again, as in the case of second derivative, similar observations are made for the spectral accuracy of optimized schemes approximating the first derivative.
The top row of \fig{implicitspectralError1} shows the clear advantage of optimized schemes (blue) over the standard scheme (red).
While the absolute value of imaginary component $\Im[e(\kdx)]$ (shown in log scale) decreases with increasing $\npt$, the real component $\Re[e(\kdx)]$, as expected from Corollary \ref{cor:oddRealZero}, is zero for the first derivative.
Similar to \fig{implicitSecondDerivativeL2error}, \fig{implicitFirstDerivativeL2error} also shows that as $M$ increases, the spectral error decreases.

Although the current framework has been derived for any positive integer $\npt$, we observed empirically that KKT matrix in \eqn{padeEvenOptimal} becomes rank deficient for $\npt \geq 6$ and $\kdx \in [0,3]$, i.e., its inverse does not exist and we cannot determine optimal coefficients analytically.
We also observed that the minimum $\npt$ at which KKT matrix becomes rank deficient depends on the range of $\kdx$. However, explicit schemes discussed in \sect{sec:explicit} do not suffer from this problem. We intend to theoretically investigate this issue in our future work.

Next, we consider the effect of $\wtfun$ on spectral accuracy of optimized schemes.

\begin{figure}[ht!]
\begin{center}
\includegraphics[width=\textwidth]{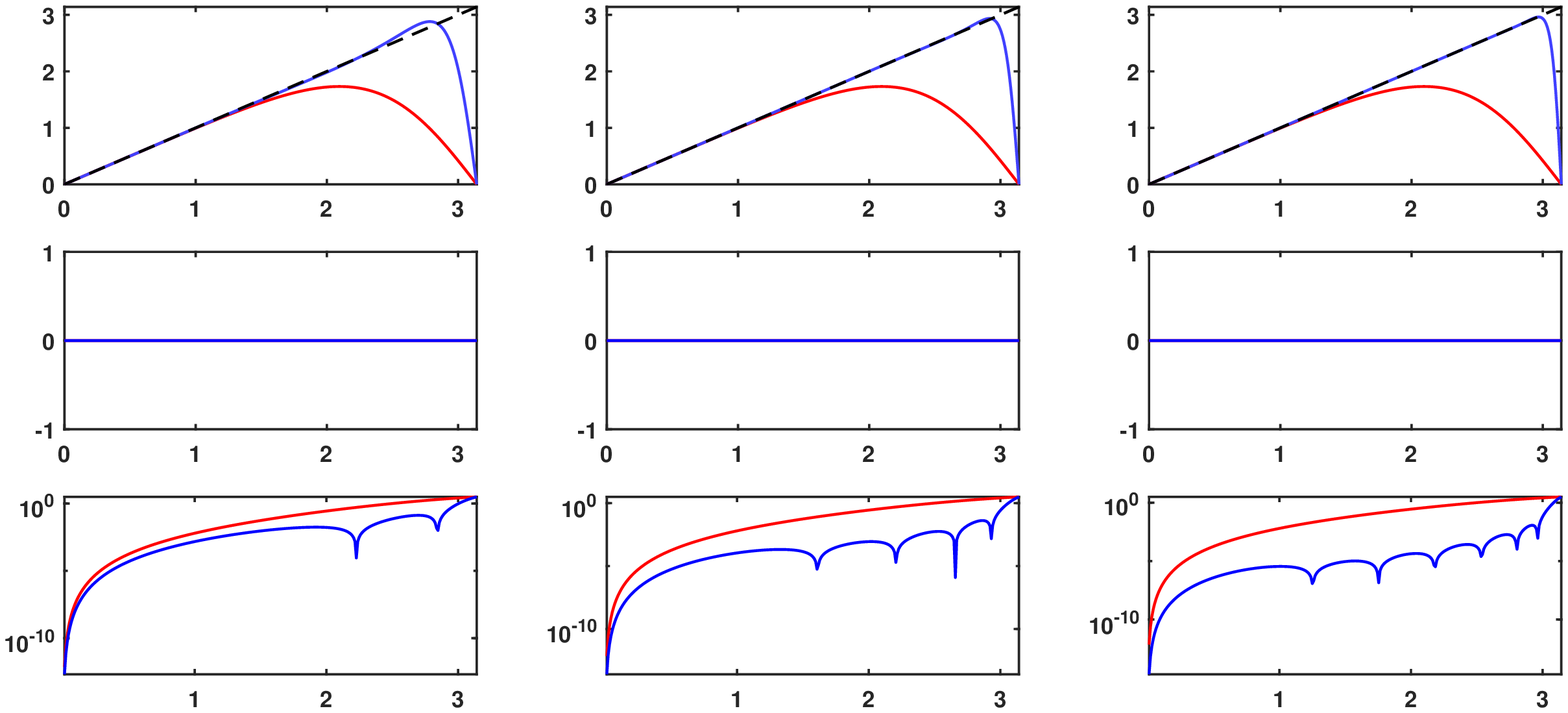}
\begin{picture}(0,0)
	\put(-250,195){\rotatebox{90} {$\mkdx$}}
	\put(-250,105){\rotatebox{90} {$\Re[e(\kdx)]$}}
	\put(-250,30){\rotatebox{90} {$|\Im[e(\kdx)]|$}}
	\put(-165,5){$\kdx$}
	\put(0,5){$\kdx$}
	\put(165,5){$\kdx$}
	\put(-175,235){$\npt = 2$}
	\put(-10,235){$\npt = 3$}
	\put(153,235){$\npt = 4$}
\end{picture}
\caption{Modified wavenumber, and real and imaginary components of spectral error for \ofdC{\npt}{\npt}{\npt}{\npt}{4} (blue) approximating the first derivative. $\gamma(\kdx) = 1$ for $\kdx\in[0, 3]$, and $\gamma(\kdx) = 0$ otherwise. While analytical $\kdx$ is shown by black dashed line, red solid line shows the results for \sfdC{1}{1}{1}{1}{4}.}
\figlabel{implicitspectralError1}
\end{center}
\end{figure}

\begin{figure}[ht!]
\begin{center}
\includegraphics[width=0.5\textwidth]{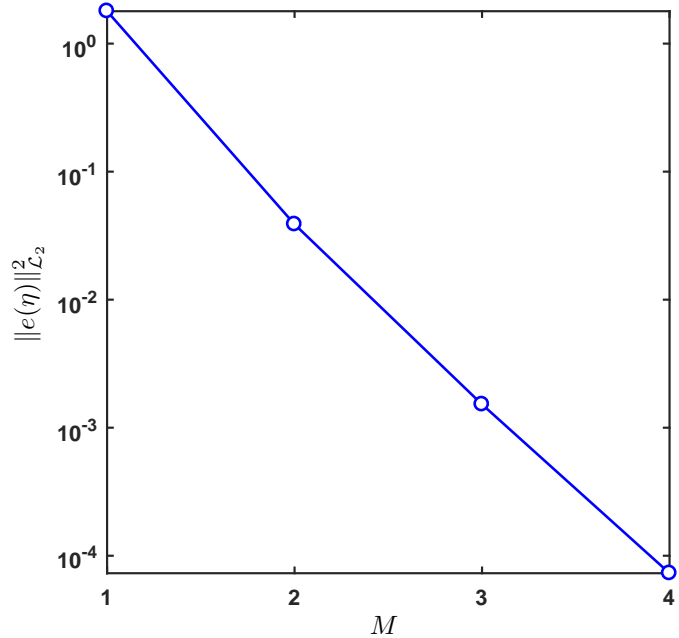} %
\begin{picture}(0,0)
        \put(-255,100){\rotatebox{90} {$\|e(\kdx)\|_{\mathcal{L}_2}^2$}}
				\put(-120,-10){$\npt$}
\end{picture}
\caption{${\mathcal{L}_2}$ norm of spectral error for \ofdC{\npt}{\npt}{\npt}{\npt}{4} approximating the first derivative. $\gamma(\kdx) = 1$ for $\kdx\in[0, 3]$, and $\gamma(\kdx) = 0$ otherwise.}
\figlabel{implicitFirstDerivativeL2error}
\end{center}
\end{figure}

\subsection{Effect of $\wtfun$}
Recall that we used weighting function $\wtfun \geq 0$ when we defined ${\mathcal{L}_2}$-norm of error in \eqn{normWithGamma}. In the optimal solutions presented so far, we have used $\wtfun = 1$ for $\kdx\in[0, 3]$ and $\wtfun = 0$ otherwise.
i.e. equal weightage was given to all wavenumbers within $[0, 3]$. We can use $\wtfun$ to weight some preferred  wavenumbers more than others and can be selected based upon physics involved in the PDE being solved.
This essentially means that, we can keep the error low at preferred wavenumbers at the expense of higher errors at other wavenumbers that are not vital to the physics of problem.

To study the effect of weighting function on spectral behavior of optimized schemes, let us consider a candidate function $\wtfun = \exp(\alpha\kdx) \geq 0$ where $\alpha \in \real$. By choosing different values of $\alpha$, one can assign different weightage for different wavenumbers.
 For example, $\alpha <0$ weights lower wavenumbers more than the higher ones, $\alpha=0$ i.e., $\wtfun=1$ weights all wavenumbers equally, and $\alpha>0$ weights higher wavenumbers more that the lower ones. This spectral behavior of optimized schemes is illustrated in \fig{gammaEffect}.

\begin{figure}[ht!]
\begin{center}

\includegraphics[width=0.65\textwidth]{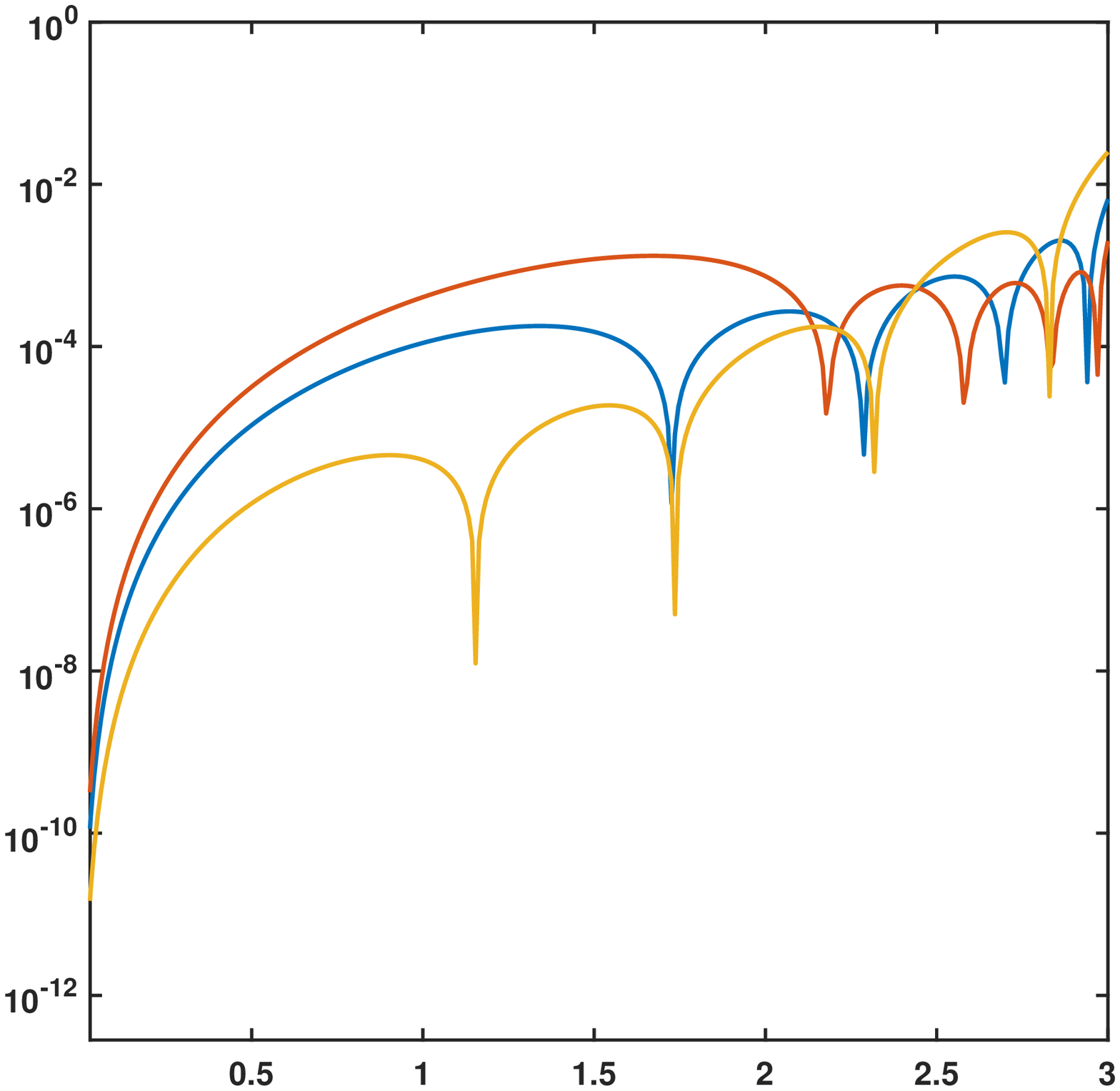} %
\begin{picture}(0,0)
        \put(-300,135){\rotatebox{90} {$\Big|\frac{\mkdx^2}{\kdx^2}-1\Big|$}}
				\put(-155,3){$\kdx$}
\end{picture}
\caption{Spectral error for \ofdC{3}{3}{3}{3}{4} approximating second derivative for $\wtfun = 1$ -- blue; $\wtfun = \exp(6\kdx)$ -- red; $\wtfun = \exp(-6\kdx)$ -- yellow, for $\kdx\in[0, 3]$, and $\wtfun = 0$ otherwise.}
\figlabel{gammaEffect}
\end{center}
\end{figure}

The choice of exponential for weighting function is purely for the purpose of illustration in \fig{gammaEffect}. One can also weight wavenumbers only over finite number of intervals, e.g., $\wtfun=\sin(\kdx)$ for $\kdx \in [0,1]$, $\wtfun=1$ for $\kdx \in [2,3]$, and $\wtfun=0$ otherwise.
From this discussion it is evident that, the flexibility of choosing $\wtfun$ can be leveraged to improve the accuracy of solution at wavenumbers that are relevant to the physics of problem.

The unified framework presented in this section allows us to derive central compact schemes with specified order of accuracy. The schemes are also optimized using $\wtfun$ to have desired spectral accuracy over the wavenumber space.
The third important aspect of discretization, in addition to order of accuracy and spectral error, is the temporal stability, which is discussed next.
But, before proceeding further, we make the following observation that is used in the stability analysis.

The optimal coefficients for $i^\text{th}$ grid point are derived using \eqn{pade1}. Since $i$ is arbitrary, it is clear that optimal coefficients must be identical for all $i$ in the domain.
For the sake of completeness, the formal proof for the same is presented as Lemma \ref{lem:gridInvariance} in Appendix \ref{sec:appInvarianceGrid}.

\section{Stability} \label{stability}
Let us consider the general linear partial differential equation
\begin{eqnarray}
\frac{\partial f}{\partial t} = \sum_{d=1}^{D} \beta_d \frac{\partial ^d f}{ \partial ^d x}.
\eqnlabel{pde}
\end{eqnarray}
Let the coefficients for the $d^\text{th}$ derivative at the $i^\text{th}$ grid point be parameterized by ${\vo{a}}_{i,d}$ and ${\vo{b}}_{i,d}$, and $\vo{A}_d$ and $\vo{B}_d$ be the vertical stacking of $\vo{a}^T_{i,d}$ and $\vo{b}^T_{i,d}$ respectively, i.e.
\begin{align}
	\vo{A}_d := \begin{bmatrix}{\vo{a}}_{1,d}^T \\ \vdots \\ {\vo{a}}_{N_p,d}^T \end{bmatrix}, \; \vo{B}_d := \begin{bmatrix}{\vo{b}}_{1,d}^T \\ \vdots \\ {\vo{b}}_{N_p,d}^T \end{bmatrix},
	\eqnlabel{defAdBd}
\end{align}
where $N_p$ is the total number of grid points in the domain.
We next introduce a shift operator $\shift{k}$, which is an $N_p\times N_p$ matrix with elements
\begin{align*}
        \Phi_{k_{ij}} := \delta ((i-j-k) \mod N_p),
\end{align*}
where $\delta(\cdot)$ is the Kronecker  delta function defined in \eqn{delta}. When applied to a vector, the operator $\shift{k}$ cyclically shifts the elements in the vector $k$ times. For example,
$$
\shift{1} :=  \delta ((i-j-1) \mod 4) = \begin{bmatrix}
        0 & 0 & 0 & 1 \\
        1 & 0 & 0 & 0 \\
        0 & 1 & 0 & 0 \\
        0 & 0 & 1 & 0
\end{bmatrix}, \quad \vo{v}:=\begin{bmatrix} 1\\2\\3\\4 \end{bmatrix}.
$$
Then,
$$
 \shift{1}\vo{v} = \begin{bmatrix} 4\\1\\2\\3 \end{bmatrix}, \quad \text{and} \quad \vo{v}^T\shift{1} = \begin{bmatrix} 2&3&4&1 \end{bmatrix}.
$$
Let   $\vo{A}_d^\vo{\Phi}$ and $\vo{B}_d^\vo{\Phi}$ be the vertical stacking of ${\vo{a}}^T_{i,d}\shift{\npt-i+1}$ and ${\vo{b}}^T_{i,d}\shift{\npt-i+1}$ respectively, i.e.
\begin{align}
	 \vo{A}_d^\vo{\Phi} := \begin{bmatrix}{\vo{a}}_{1,d}^T\shift{\npt}  \\ \vdots \\ {\vo{a}}_{N_p,d}^T \shift{-\npt} \end{bmatrix},\; \vo{B}_d^\vo{\Phi} := \begin{bmatrix}{\vo{b}}_{1,d}^T\shift{\npt} \\ \vdots \\ {\vo{b}}_{N_p,d}^T \shift{-\npt}\end{bmatrix}
	 \eqnlabel{AdBd},
\end{align}
Therefore, we can write
\begin{align}
\vo{A}_d^\vo{\Phi} = \sum_{i=1}^{N_p} \bdel_{N_p}(i)\bdel_{N_p}(i)^T\vo{A}_d\shift{\npt-i+1},\\
\vo{B}_d^\vo{\Phi} = \sum_{i=1}^{N_p}\bdel_{N_p}(i)\bdel_{N_p}(i)^T\vo{B}_d\shift{\npt-i+1}.
\eqnlabel{affineAdBd}
\end{align}
where, $\bdel_{N_p}(\cdot)\in\real^{N_p}$, as defined in \eqn{deltavec}. Thus, $\vo{A}_d^\vo{\Phi} $ and $\vo{B}_d^\vo{\Phi}$ are linear functions of $\vo{A}_d$ and $\vo{B}_d$ defined in \eqn{defAdBd}.
Let us define two vectors,
\begin{align*}
\vo{F} := \begin{bmatrix}
        f_1 \\ \vdots \\ f_{N_p}
 \end{bmatrix}, \quad \text{and} \quad
 \vo{F}^{(d)} := \begin{bmatrix}
        f_1^{(d)} \\ \vdots \\
        f_{N_p}^{(d)}
 \end{bmatrix}.
\end{align*}
The finite-difference approximation for the $d^\text{th}$ derivative at the $i^\text{th}$ grid point is
\begin{align*}
{\vo{b}}^T_{i,d}\shift{\npt-i+1}\vo{F}^{(d)} = \frac{1}{(\dx)^d}{\vo{a}}_{i,d}^T\shift{\npt-i+1}\vo{F},
\end{align*}
for $i=\{1,2,\cdots,N_p\}$. The finite-difference approximation for the $d^\text{th}$ derivative over the entire domain is therefore
\begin{align*}
	\vo{B}_d^\vo{\Phi} \vo{F}^{(d)} =  \frac{1}{(\dx)^d} \vo{A}_d^\vo{\Phi} \vo{F},
\end{align*}
or,
\begin{align}
\vo{F}^{(d)} = \frac{1}{(\Delta x)^d}(\vo{B}_d^\vo{\Phi})^{-1}\vo{A}_d^\vo{\Phi} \vo{F}.
\eqnlabel{genlDisc}
\end{align}
subject to the existence of the inverse. For all the schemes presented in this paper, $\vo{B}_d^\vo{\Phi}$ turns out to be invertible.


\subsection{Stability of semi-discrete scheme} \label{semiDiscStability}
Similar to the stability analysis presented in \cite{kumari2018unified} for explicit spatial schemes, herein we generalize it for implicit schemes. Using spatial discretization \eqn{genlDisc}, we can convert PDE \eqn{pde} into an ODE which is continuous in time as follows
\begin{eqnarray}
\vo{\dot{F}} = \underbrace{\bigg( \sum_{d=1}^{D} \frac{1}{(\Delta x)^d} \beta_d (\vo{B}_d^\vo{\Phi})^{-1}\vo{A}_d^\vo{\Phi} \bigg)}_{\semiDiscSysMat:=} \vo{F} = \semiDiscSysMat \vo{F},
\eqnlabel{semiDiscODE}
\end{eqnarray}
with solution
\begin{eqnarray}
\vo{F}(t) = \exp (t\semiDiscSysMat) \vo{F_0},
\eqnlabel{SolSemiDiscODE}
\end{eqnarray}
where $\vo{F_0}$ is the initial condition. For a single Fourier mode, the approximate $d^\text{th}$ derivative can be expressed in terms of the modified wavenumber as $(j\mkdx)^d \hat{f}$. Then the original PDE becomes,
\begin{align*}
\frac{d\hat{f}}{dt} = \sum_{d=1}^{D} \beta_d(j\mkdx)^d \hat{f}.
\end{align*}
The solution is given by
\begin{align*}
\frac{\hat{f}}{\hat{f_0}} = \exp \left[ \sum_{d=1}^{D} \beta_d(j\mkdx)^d t \right],
\end{align*}
where $\hat{f_0}$ is the initial condition at $t=0$.
If the solution to the original PDE is non-increasing in time, then the discretization is considered stable if
no Fourier mode grows in time. This is satisfied if
\begin{eqnarray}
\Re \left\{\sum_{d=1}^{D} \beta_d(j\mkdx)^d \right\} \le 0.
\eqnlabel{semiDiscRealZero}
\end{eqnarray}
From Lemma \ref{lem:evenImgZero} and \ref{lem:oddRealZero}, and equation \eqn{padeWave},
\begin{align*}
(j\mkdx)^d &= \frac{\vo{C}^T(\kdx)\vo{a}_d}{\vo{C}^T(\kdx)\vo{b}_d}, \quad \text{for even $d$, and} \\
(j\mkdx)^d &= j\bigg(\frac{\vo{S}^T(\kdx)\vo{a}_d }{\vo{C}^T(\kdx)\vo{b}_d}\bigg), \quad \text{for odd $d$.}
\end{align*}
Therefore,
\begin{align*}
\sum_{d=1}^{D} \beta_d(j\mkdx)^d &= j \beta_1 \frac{\vo{S}^T(\kdx)\vo{a}_1 }{\vo{C}^T(\kdx)\vo{b}_1} + \beta_2 \frac{\vo{C}^T(\kdx)\vo{a}_2}{\vo{C}^T(\kdx)\vo{b}_2} + j \beta_3 \frac{\vo{S}^T(\kdx)\vo{a}_3 }{\vo{C}^T(\kdx)\vo{b}_3} + \beta_4 \frac{\vo{C}^T(\kdx)\vo{a}_4}{\vo{C}^T(\kdx)\vo{b}_4} ... \\
&= (\beta_2 \frac{\vo{C}^T(\kdx)\vo{a}_2}{\vo{C}^T(\kdx)\vo{b}_2} + \beta_4 \frac{\vo{C}^T(\kdx)\vo{a}_4}{\vo{C}^T(\kdx)\vo{b}_4} + ...) + j(\beta_1 \frac{\vo{S}^T(\kdx)\vo{a}_1 }{\vo{C}^T(\kdx)\vo{b}_1} + \beta_3 \frac{\vo{S}^T(\kdx)\vo{a}_3 }{\vo{C}^T(\kdx)\vo{b}_3} + ...)
\end{align*}
Then \eqn{semiDiscRealZero} implies,
\begin{eqnarray}
\Re \left\{\sum_{d=1}^{D} \beta_d(j\mkdx)^d \right\} = \beta_2 \frac{\vo{C}^T(\kdx)\vo{a}_2}{\vo{C}^T(\kdx)\vo{b}_2} + \beta_4 \frac{\vo{C}^T(\kdx)\vo{a}_4}{\vo{C}^T(\kdx)\vo{b}_4} + ... \le 0.
\eqnlabel{semiDiscStabilityIneq}
\end{eqnarray}
For even $d$, i.e. $d=2q$, the optimization of spectral error guarantees,
\begin{align*}
(j\mkdx)^d = (-1)^q \mkdx^{2q} = \frac{\vo{C}^T(\kdx)\vo{a}_{2q}}{\vo{C}^T(\kdx)\vo{b}_{2q}}.
\end{align*}
Therefore, the sign of $\frac{\vo{C}^T(\kdx)\vo{a}_{2q}}{\vo{C}^T(\kdx)\vo{b}_{2q}}$ changes alternatively with $q$ as follows,
\begin{align*}
\text{for $q = 1$,} \quad \frac{\vo{C}^T(\kdx)\vo{a}_{2}}{\vo{C}^T(\kdx)\vo{b}_{2}} &= -\mkdx^2 \le 0, \\
\text{for $q = 2$,} \quad \frac{\vo{C}^T(\kdx)\vo{a}_{4}}{\vo{C}^T(\kdx)\vo{b}_{4}} &= \mkdx^4 \ge 0, \\
\text{for $q = 3$,} \quad \frac{\vo{C}^T(\kdx)\vo{a}_{6}}{\vo{C}^T(\kdx)\vo{b}_{6}} &= -\mkdx^6 \le 0, \\
\end{align*}
and so on. If coefficients $\beta_d$ for $d=2q$ satisfy $\beta_{2q} = (-1)^{q+1} \zeta_{2q}^2$ for $\zeta_{2q} \in \real$, then  \eqn{semiDiscStabilityIneq} is implicitly satisfied. In general, if
\begin{eqnarray}
-\beta_2 \mkdx^2 + \beta_4 \mkdx^4 - \beta_6 \mkdx^6 + ... \le 0,
\eqnlabel{semiDiscStabilityIneqGen}
\end{eqnarray}
then the overall discretization is stable.

\subsection{Stability of fully discretized scheme}
After temporal discretization of \eqn{semiDiscODE} using a given temporal scheme, we get a fully discretized system. We analyze the stability of such system in this section. Our goal is to find maximum $\dt$ for which the overall discretization is stable.
We begin with the forward Euler method.

\subsubsection{Forward Euler method}
Using forward Euler method for $\partial f/\partial t$ and the optimal finite difference approximation for $\partial^df/\partial x^d$, from \eqn{semiDiscODE} we get following

\begin{align}
	\vo{F}^{k+1} = \left(\vo{I}_{N_p} +  \sum_{d=1}^{D} \frac{\dt}{(\Delta x)^d}\beta_d (\vo{B}_d^\vo{\Phi})^{-1}\vo{A}_d^\vo{\Phi}\right)\vo{F}^k.
	\eqnlabel{discDyn}
\end{align}
	This is a linear discrete-time system in $\vo{F}$, where the system matrix is dependent on the stencil coefficients $\vo{A}_d$ and $\vo{B}_d$. For stability, the eigenvalues of $ \left(\vo{I}_{N_p} + \dt \sum_d \frac{1}{(\Delta x)^d}\beta_d (\vo{B}_d^\vo{\Phi})^{-1}\vo{A}_d^\vo{\Phi}\right)$ should be less than one, which can be achieved by constraining the $2$-norm as
	\begin{align}
	\left\|\vo{I}_{N_p} + \dt \sum_{d=1}^{D} \frac{1}{(\Delta x)^d}\beta_d (\vo{B}_d^\vo{\Phi})^{-1}\vo{A}_d^\vo{\Phi}\right\|_2 \leq 1.
	\eqnlabel{pade:Stability}
	\end{align}
We use the analytical solution for $\vo{A}_d$ and $\vo{B}_d$ from spectral error optimization, and maximize $\dt$ subject to the stability constraint \eqn{pade:Stability}. The reader is referred to \cite{kumari2018unified} for the solution of this maximization problem using software packages such as \texttt{cvx} \cite{grant2008cvx}.
 Solving optimization problem by this approach may become difficult if temporal schemes other than forward Euler method are used. Therefore, we present an approach to determine maximum $\dt$ for generalized temporal schemes in the following section.

\subsubsection{Generalized temporal scheme} \label{fullyDiscStabilityRK}
Runge-Kutta (RK) methods are widely used for temporal discretization of ODEs to attain higher order of accuracy. Many temporal schemes can be expressed as special cases of implicit RK method, e.g., see Appendix \ref{sec:appButcherTab} for different temporal schemes written in Butcher tableau form of RK methods.
Thus, herein, we consider implicit RK method as a generalized temporal scheme to determine maximum $\dt$ which guarantees the stability.
An implicit $\rks$-stage RK scheme with the following Butcher tableau
\begin{align}
\begin{array}{c|c}
\rkC  & \rkA \\
\hline
\quad & \rkB^{T}
\end{array}
:=
\begin{array}{c|c c c c}
\rkc_1 & \rka_{11} & \rka_{12} & \dots & \rka_{1 \rks}\\
\rkc_2 & \rka_{21} & \rka_{22} & \dots & \rka_{2 \rks}\\
\vdots       & \vdots           & \vdots         & \ddots & \vdots  \\
\rkc_{\rks} & \rka_{\rks 1} & \rka_{\rks 2} & \dots & \rka_{\rks \rks}\\
\hline
\quad & \rkb_{1} & \rkb_{2} & \dots & \rkb_{\rks} \\
\end{array}
\eqnlabel{butcherTab}
\end{align}
is given by
\begin{eqnarray}
f^{n+1} &=& f^{n} + \dt \sum _{\indi = 1}^{\rks} \rkb_{\indi} k_{\indi},
\eqnlabel{rkAtPt}
\end{eqnarray}
\begin{align*}
k_{\indi} = \dot{f} \bigg(t_n + c_{\indi} \dt, \quad f^{n} + \dt \sum _{\indj = 1}^{\rks} \rka_{\indi \indj} k_{\indj} \bigg),
\end{align*}
where, $f^{n}$ is the value of function $f(t)$ at $n^{th}$ time step and $\dot{f}(t,f)$ is the time derivative of $f(t)$ at time $t$.
For a scalar equation such as $\dot{f}= \lambda  f$, $f^{n+1}$ can be written in terms of \textit{stability function} of the RK scheme $r(z)$, $z \in \complex$, as
\begin{align}
f^ {n+1} &= r (\lambda  \dt) f ^n,
\eqnlabel{rkScalarDisc}
\end{align}
\begin{align*}
\text{where,} \quad r (z) &= 1+z\rkB^T(\vo{I} - z\rkA)^{-1}\vo{1}_{\rks}
= \frac{\det(\vo{I} + z(\vo{1}_{\rks}\rkB^T - \rkA))}{\det(\vo{I}-z\rkA)},
\end{align*}
and $\vo{1}_{\rks}$ represents a column vector of dimension $\rks$ whose all elements are equal to $1$ \cite{butcher2008ode}. Thus, $r(z)$ is a polynomial-over-polynomial function of $z$.
The sequence formed by \eqn{rkScalarDisc} is bounded iff $|r(z)|\leq 1$, which is the stability condition for the RK scheme. For all such $z = \lambda \dt$, and known $\lambda$,
we can find bounds on $\dt$ such that the product $\lambda \dt$ lies in the \textit{stability region} of the RK scheme determined by the stability condition. The equality $|r(z)|= 1$ defines a closed curve in the complex plane.
Generally, for the explicit RK schemes, area enclosed by the stability curve defines the stability region. While in the case of implicit RK schemes, stability region may lie either inside or outside the closed curve.
If stability curve is the imaginary axis itself, then the scheme is stable for all $z$ which lie in the left half of the complex plane. Note that, $\dt>0$ is a real number while $\lambda$ can be a complex number, so that $z = \lambda\dt$ is complex.
The timestep $\dt$ just \textit{scales} the radius of $\lambda$ while keeping the argument same in the complex plane. We are interested in finding maximum scaling $\dt$ for which scaled $\lambda$ lies in the stability region.

As in the forward Euler case, here also we assume that $\vo{A}_d$ and $\vo{B}_d$ are determined analytically and hence $\semiDiscSysMat$ is known. Now, let us consider multi-dimensional semi-discrete system defined by \eqn{semiDiscODE}.
We reduce this $N_p$-dimensional system into $N_p$ scalar decoupled systems using the similarity transformation.
Let $\vo{D}$ be a diagonal matrix with eigenvalues of the $\semiDiscSysMat$ as its diagonal elements and $\vo{M}$ be the modal matrix with eigenvectors of $\semiDiscSysMat$ as its columns, such that $\vo{D}=\vo{M}^{-1}\semiDiscSysMat\vo{M}$.
Let us define the transformation $\vo{L}:=\vo{M}\vo{F}$. Then \eqn{semiDiscODE} can be written as
\begin{align*}
\vo{M}\dot{\vo{L}} &= \semiDiscSysMat\vo{M}\vo{L},
\end{align*}
\begin{align}
\dot{\vo{L}}  &= \vo{M}^{-1}\semiDiscSysMat\vo{M}\vo{L} = \vo{D}\vo{L}.
\eqnlabel{decoupledSys}
\end{align}
Since $\vo{D}$ is diagonal, \eqn{decoupledSys} represents $N_p$ scalar systems as follows
\begin{align*}
\dot{l}_ i &= \lambda _ i l _ i, \quad i = 1 \dots N_p,
\end{align*}
where $\lambda _ i$, are $N_p$ eigenvalues of $\semiDiscSysMat$. The $\ith$ scalar system can be fully discretized using the RK scheme, and from \eqn{rkScalarDisc} it follows that
\begin{align*}
l_ i ^ {n+1} &= r (\lambda _ i \dt) l _ i ^n, \quad i = 1 \dots N_p.
\end{align*}
For overall stability of \eqn{decoupledSys}, it is required that the products $\lambda _ i \dt$ lie within stability region of the RK scheme. Thus, for known $\lambda _i$, bounds on
$\dt$ can be calculated numerically to guarantee the stability of the system.

\begin{figure}[htb!]
\begin{center}
\includegraphics[width=0.4\textwidth,]{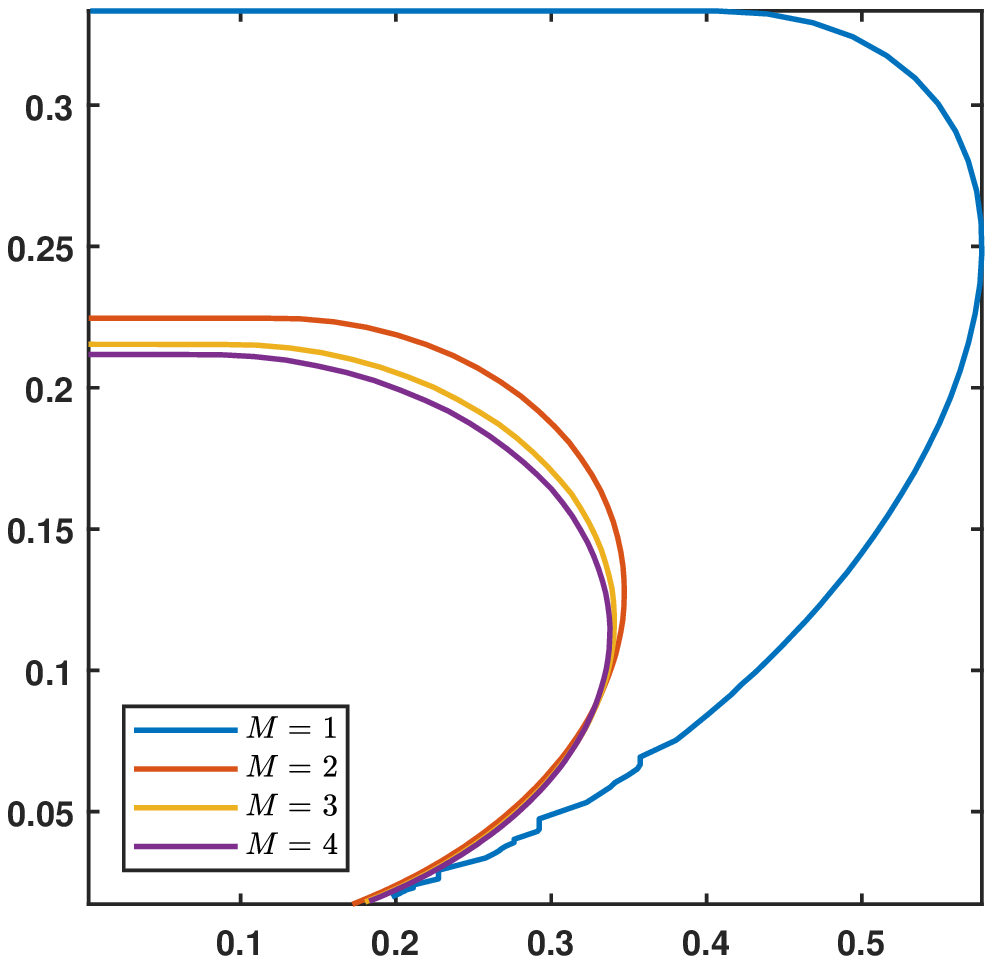} \hspace{20mm}
\includegraphics[width=0.4\textwidth,]{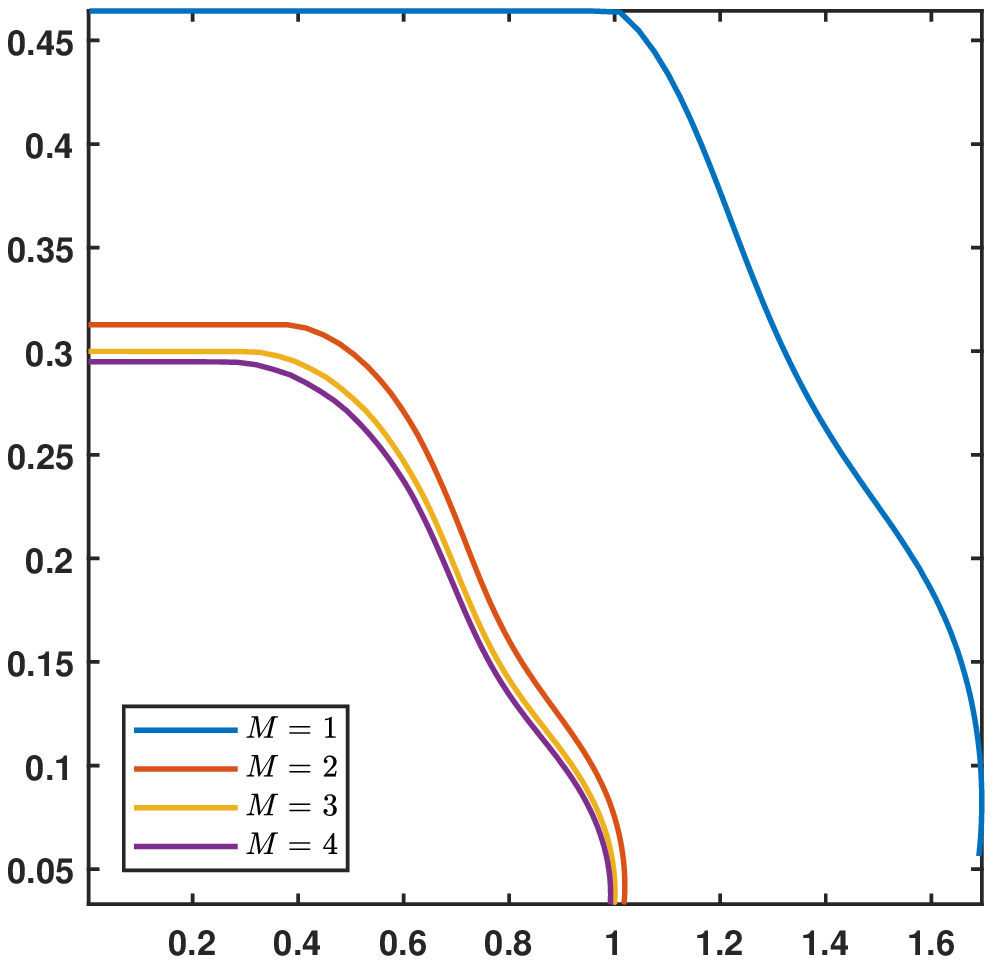}
\begin{picture}(0,0)
        \put(-455,100){\rotatebox{90} {$r_2$}}
				\put(-342,-6){$r_1$}
        \put(-85,-6){$r_1$}
        \put(-345,190){\footnotesize FE}
        \put(-100,190){\footnotesize ERK4}
\end{picture}
\caption{$r_2$ vs $r_1$ for forward Euler and ERK4 method. First and second derivatives approximated using \ofdC{\npt}{\npt}{\npt}{\npt}{4}, $\gamma(\kdx) = 1$ for $\kdx\in[0, 3]$, and $\gamma(\kdx) = 0$ otherwise.}
\figlabel{r1VSr2}
\end{center}
\end{figure}

Generally, $\dx$ and $\dt$ used for the complete discretization are expressed as a normalized quantity called Courant-Friedrichs-Lewy (CFL) number.
The CFL number with respect to $d^\text{th}$ derivative is defined as $r_d:=|\beta_d| \dt / \dx^d$.
By varying $\dx$ and calculating corresponding maximum $\dt$, we can calculate and plot CFL numbers $r_1$ and $r_2$ for the advection-diffusion equation (see \eqn{pdeAdvDiff}).
A similar plot obtained for optimal schemes of different stencil sizes for forward Euler and ERK4 method is shown in \fig{r1VSr2}. $\npt=1$ (blue) corresponds to the standard scheme \sfdC{1}{1}{1}{1}{4}, and $\npt=2$ onwards correspond to optimized schemes, \ofdC{\npt}{\npt}{\npt}{\npt}{4}. There is a drastic reduction in stability region from $\npt=1$ to $\npt=2$.
As $\npt$ is increased further, stability region decreases.

\begin{figure}[ht!]
\begin{center}
\includegraphics[width=0.5\textwidth]{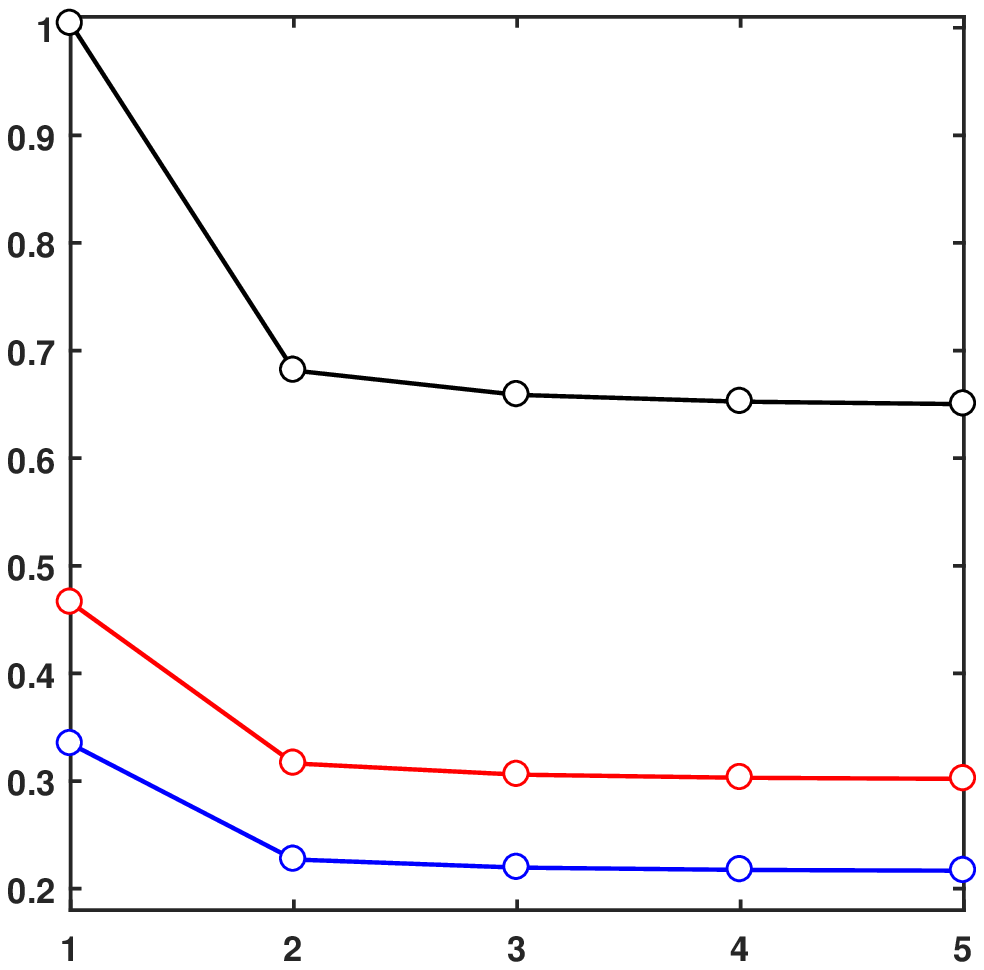}
\begin{picture}(0,0)
        \put(-252,100){\rotatebox{90} {$r_{2}$}}
				\put(-120,-10){$M$}
\end{picture}
\caption{Variation of maximum allowable $r_{2}$ with different stencil size, for different temporal schemes: FE (blue), ERK4 (red), and IRK2 (black). First and second derivatives approximated using \ofdC{\npt}{\npt}{\npt}{\npt}{4}.}
\figlabel{dtRKCompare}
\end{center}
\end{figure}

\begin{figure}[ht!]
\begin{center}
\includegraphics[width=\textwidth]{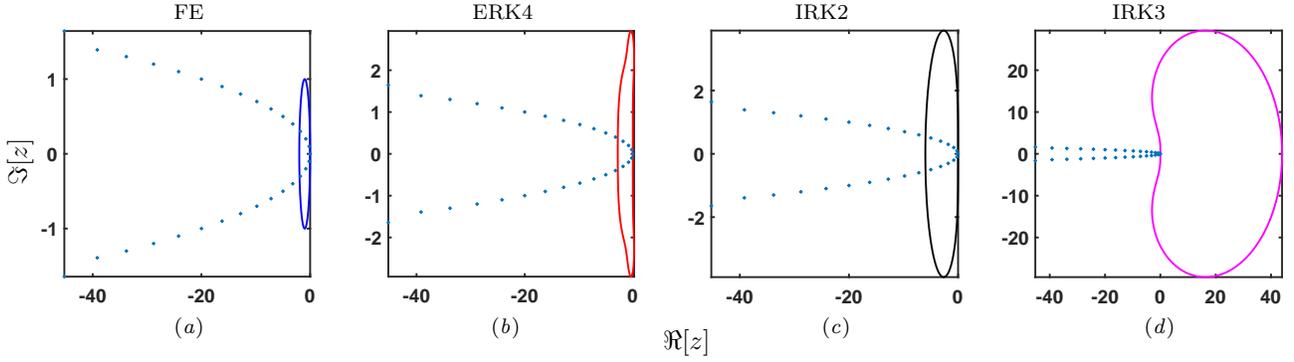}
\begin{picture}(0,0)
        \put(-248,57){\rotatebox{90} {$\Im[z]$}}
				\put(0,-5){$\Re[z]$}
				\put(-185,1){{\footnotesize (\textit{a})}}
				\put(-65,1){{\footnotesize (\textit{b})}}
				\put(60,1){{\footnotesize (\textit{c})}}
				\put(182,1){{\footnotesize (\textit{d})}}
				\put(-185,120){\footnotesize FE}
				\put(-72,120){\footnotesize ERK4}
				\put(50,120){\footnotesize IRK2}
				\put(170,120){\footnotesize IRK3}
\end{picture}
\caption{Closed curve shows stability regions for different RK schemes. Asterisks are eigenvalues of $\semiDiscSysMat$ for $\npt = 4$, $\beta = [-0.1, 0.2]$, $N_p = 31$.
(\textit{a}), (\textit{b}) and (\textit{c}): Inside the curve is stable.  (\textit{d}): Outside the curve is stable.}
\figlabel{rkStabRegion}
\end{center}
\end{figure}

Variation of maximum allowable $r_2$ with stencil size for different temporal schemes is shown in \fig{dtRKCompare}. Butcher tableaux for the RK schemes are given in Appendix \ref{sec:appButcherTab}.
We observe that, for all RK schemes in \fig{dtRKCompare}, there is a drastic drop in maximum allowable $\dt$ (or $r_2$) from $\npt=1$ to $\npt=2$. This is a consequence of drastic reduction in stability region from $\npt=1$ to $\npt=2$, similar to one shown in \fig{r1VSr2}.
Also, maximum allowable $r_2$ for IRK2 (black) is the largest followed by ERK4 (red) and FE (blue). This is due to the fact that IRK2 has the largest, and FE has the smallest stability region in the complex plane, as shown in \fig{rkStabRegion}$(a),(b),(c)$. For the same problem specifications, IRK3 is stable for all $0<\dt<\infty$, which is evident from \fig{rkStabRegion}$(d)$, as all eigenvalues, $\lambda_i$, lie in the stability region of IRK3 (outside the closed curve).

We next consider the relationship between $\lambda _ i$ and the derivatives in a PDE. For even derivatives from Lemma \ref{lem:evenImgZero}, both $\vo{a}_d$ and $\vo{b}_d$ are symmetric about the central element.
Consequently, both $\vo{A}_d^\vo{\Phi}$ and $\vo{B}_d^\vo{\Phi}$ defined in \eqn{AdBd} are real symmetric circulant matrices. Therefore, if a PDE consists of only even derivatives, then $\semiDiscSysMat$ defined in \eqn{semiDiscODE} is also symmetric and hence all $\lambda _ i$ are real.
Real parts of $\lambda _ i$ cause only amplitude decay (diffusion) or amplification but phase of the solution remains constant.
On the other hand, for odd derivatives, $\vo{a}_d$ is skew-symmetric and $\vo{b}_d$ is symmetric about the central element. Therefore, circulant matrices $\vo{A}_d^\vo{\Phi}$ and $\vo{B}_d^\vo{\Phi}$ become skew-symmetric and symmetric respectively, which in turn make $\semiDiscSysMat$ a skew-symmetric matrix.
Consequently, if a PDE consists of only odd derivatives, then all $\lambda _ i$ are pure imaginary which affect only phase of the solution preserving magnitude of amplitudes, leading to pure dispersion.
It is clear that even derivatives contribute to the real part of $\lambda _ i$  and hence diffusion of the numerical solution, and odd derivatives cause dispersion by means of the imaginary part of $\lambda _ i$.
However, this is true only for central schemes which admit results from Lemma \ref{lem:evenImgZero} and \ref{lem:oddRealZero}. It is shown in the subsequent section that in the case of biased schemes, both even and odd derivatives individually contribute to real and imaginary components of $\lambda _ i$.

In what follows next, we extend the stability analysis discussed in this section, and framework presented in \sect{implicitFramework} to derive and analyze special types of optimized schemes.

\section{Special cases of the framework} \label{specialCases}
We generalize \eqn{pade1} further as
\begin{align}
\sum_{m = -\npt^B_L} ^{\npt^B_R} \mathrm{b}_m f^{(d)}_{i+m}	= \frac{1}{(\dx)^d}\sum_{m = -\npt^A_L} ^{\npt^A_R} \mathrm{a}_m f_{i+m},
\eqnlabel{padeMostGen}
\end{align}
where, each one of $\npt^B_L,\npt^B_R,\npt^A_L$ and $\npt^A_R$ can be chosen independently.
 In \sect{implicitFramework}, we have presented a general framework to derive optimal implicit finite difference approximations.
These approximations are obtained for equal LHS and RHS stencil sizes, i.e. $\npt^B_L=\npt^B_R=\npt^A_L=\npt^A_R = \npt$.
In this section, we show that other types of schemes, namely, compact schemes with unequal LHS and RHS stencil sizes, spatially explicit, and biased finite difference approximations can be derived as special cases of the framework presented in \sect{implicitFramework}.
These special cases are derived by imposing additional constraints on the coefficients $\vo{a_d}$ and $\vo{b_d}$ while solving the minimization problem defined in \eqn{evenOptim}.

In the most generalized form, we denote $(p+1)^{\text{th}}$ order accurate optimized schemes derived using \eqn{padeMostGen} as \ofd{\npt^A_L}{\npt^A_R}{\npt^B_L}{\npt^B_R}{p+1}. For example, $5^\text{th}$ order accurate optimized scheme obtained using $\npt^A_L=1,\npt^A_R=2, \npt^B_L=3$ and $\npt^B_R=4$ is denoted by \ofd{1}{2}{3}{4}{5}.
See \fig{genStencil} as an illustration of this. The $i^\text{th}$ grid point at which derivative is to be approximated is shown by a green asterisk, and neighboring grid points are shown by black circles. Blue and red braces respectively show stencil points used in RHS and LHS of \eqn{padeMostGen}.
\begin{figure}[ht!]
\begin{center}
\includegraphics[width=0.8\textwidth]{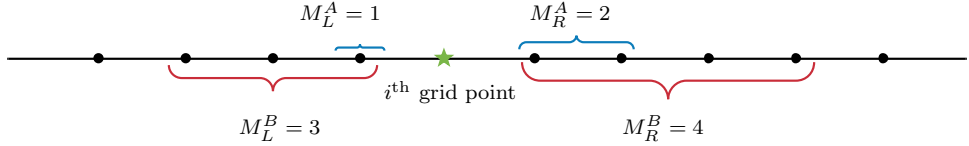}
\begin{picture}(0,0)
        \put(-262,55){\footnotesize $\npt^A_L=1$}
        \put(-175,55){\footnotesize $\npt^A_R=2$}
        \put(-285,12){\footnotesize $\npt^B_L=3$}
				\put(-140,12){\footnotesize $\npt^B_R=4$}
        \put(-230,25){\footnotesize $i^\text{th}$ grid point}
\end{picture}
\caption{Illustration for the notation of optimized schemes in the most generalized form.}
\figlabel{genStencil}
\end{center}
\end{figure}

\subsection{Central compact schemes with unequal LHS and RHS stencil sizes}
In \eqn{pade1}, indices of both $\mathrm{a}_m$ and $\mathrm{b}_m$ vary from $-\npt$ to $\npt$. We can also have different ranges of index $m$ for $\mathrm{a}_m$ and $\mathrm{b}_m$.
Let $\npt^B_L=\npt^B_R=\npt^B$ and $\npt^A_L=\npt^A_R=\npt^A$. i.e., \eqn{padeMostGen} becomes,
\begin{align}
\sum_{m = -\npt ^B} ^{\npt ^B} \mathrm{b}_m f^{(d)}_{i+m}	= \frac{1}{(\dx)^d}\sum_{m = -\npt ^A} ^{\npt ^A} \mathrm{a}_m f_{i+m},
\eqnlabel{padeNonSymm}
\end{align}
where $\npt^B\neq\npt^A$.

Continuing the use of the notation defined in \sect{implicitFramework}, we refer optimized schemes derived using \eqn{padeNonSymm} as \ofdC{\npt^A}{\npt^A}{\npt^B}{\npt^B}{\cdot}.
On the other hand, if a scheme is derived by the \textit{standard} method of coefficient matching without optimization, then it is referred by \sfdC{\npt^A}{\npt^A}{\npt^B}{\npt^B}{\cdot}.
When \eqn{penta7stencil} is compared with \eqn{padeNonSymm}, we observe that $\npt^A=3$ and $\npt^B=2$ for \eqn{penta7stencil}. Therefore, pentadiagonal optimized schemes are denoted by \ofdC{3}{3}{2}{2}{\cdot}, and pentadiagonal standard schemes are denoted by \sfdC{3}{3}{2}{2}{\cdot}.

The stencil size for the right side of \eqn{padeNonSymm} is defined as $N^A :=2\npt ^A+1$, and similarly the stencil size for the left side is $N^B :=2\npt ^B+1$.
Let us define \textit{augmented} parameters as $\nptAug:= \max \{\npt ^A,\npt ^B \}$ and $\NAug:= 2\nptAug + 1$.

First, we consider the case $\npt ^A > \npt ^B$, and hence $\nptAug = \npt ^A$. This \ofdC{\npt^A}{\npt^A}{\npt^B}{\npt^B}{\cdot} scheme can be treated as an \ofdC{\npt^A}{\npt^A}{\npt^A}{\npt^A}{\cdot} scheme with augmented stencil size $\NAug=N^A = 2\npt ^A + 1$,
and an extra imposed constraint that $\mathrm{b}_m = 0$ for $m=\{-\npt ^A,-\npt ^A+1,\cdots,-(\npt ^B+1),\npt ^B+1,\npt ^B+2,\cdots,\npt ^A \}$. In other words, we are imposing a constraint that $(N^A-N^B)$ elements of $\vo{b}_d$ which are located symmetrically about the central element, to be zero.
Therefore, we can use the central difference framework presented in \sect{implicitFramework} with $\npt = \npt ^A$ and the following additional constraint

\begin{align}
 \underbrace{\begin{bmatrix}
 \bdel_{n}^T(1) \\ \bdel_{n}^T(2) \\ \vdots \\ \bdel_{n}^T(\npt ^A-\npt ^B) \\ \bdel_{n}^T(\npt ^A+\npt ^B+2) \\ \bdel_{n}^T(\npt ^A+\npt ^B+3) \\ \vdots \\ \bdel_{n}^T(\NAug)
\end{bmatrix}}_{\vo{\Delta}^T:=} \vo{b}_d = \vo{0}_{(\NAug-N^B)\times 1},
\eqnlabel{specialNonSymmCon0}
\end{align}
for $n=\NAug$, where $\bdel_n^T(\cdot)$ is defined in \eqn{deltavec}. Therefore, the constraint can be written as
\begin{align}
\begin{bmatrix} \vo{0}_{(\NAug - N ^B)\times \NAug}  & \vo{\Delta} ^T \end{bmatrix}
\begin{bmatrix} \vo{a}_d \\ \vo{b}_d \end{bmatrix}  = \vo{0}_{(\NAug - N ^B) \times 1}.
\eqnlabel{specialNonSymmCon}
\end{align}
The minimization problem defined in \eqn{evenOptim} is solved analytically using the KKT condition constructed by incorporating the additional constraint \eqn{specialNonSymmCon}.
Then the optimal solution similar to \eqn{padeEvenOptimal} is given by
\begin{align}
\begin{pmatrix}\vo{a}_d^\ast\\\vo{b}_d^\ast\\\boldsymbol{\lambda}_d^\ast\end{pmatrix} = \vo{P}
\begin{bmatrix} \vo{0}_{2\NAug\times 1} \\ \vo{0}_{(d+p+1)\times 1} \\ 1 \\ \vo{0}_{(\NAug - N ^B) \times 1} \end{bmatrix},
\eqnlabel{specialNonSymmOptimal}
\end{align}
where,
\begin{align*} \vo{P} =
\left[\begin{array}{cc}  \vo{Q}_d &  \left(\begin{array}{cc}\vo{X}_d^T & -\vo{Y}_d^T\\
\vo{0}_{1 \times \NAug} & \bdel_{\NAug}^T(\nptAug+1) \\
\vo{0}_{(\NAug - N ^B) \times \NAug} & \vo{\Delta}^T \end{array}\right)^T\\
 \left(\begin{array}{cc}\vo{X}_d^T & -\vo{Y}_d^T\\
\vo{0}_{1 \times \NAug} & \bdel_{\NAug}^T(\nptAug+1) \\
\vo{0}_{(\NAug - N ^B) \times \NAug} & \vo{\Delta}^T \end{array}\right) & \vo{0}_{(\NAug - N ^B+d+p+2) \times (\NAug - N^B +d+p+2)}
\end{array}\right]^{-1}.
\end{align*}
In a similar manner, we can calculate optimal coefficients for the complementary case $\npt ^A < \npt ^B$ by setting $(N^B-N^A)$ elements of $\vo{a}_d$ to zero.

Note that by setting $M^A=3$ and $M^B=2$ in current framework, we get pentadiagonal compact schemes such as presented in \cite{lele1992compact} and \cite{kim1996opt}.
Latter frameworks restrict the stencil size and require that $\npt ^A > \npt ^B$.
On the other hand, the framework presented in this paper is more general because  $\npt ^A$ and $\npt ^B$ can be chosen independently.

Also note that, the constraint \eqn{specialNonSymmCon0} sets $\mathrm{b}_m = 0$ symmetrically about the central element of $\vo{b}_d$.
Therefore, Lemmas \ref{lem:evenImgZero} and \ref{lem:oddRealZero}, and stability analysis presented in Section \ref{stability} are valid for this special case.

\subsection{Central spatially explicit schemes} \label{sec:explicit}
A finite difference approximation is said to be spatially explicit if coefficients $b_m$ in \eqn{pade1} are
\begin{align*}
b_m  = \left\{\begin{array}{c} 1 \text{ if } m = 0,\\ 0 \text{ if } m \neq 0.\end{array}\right.
\end{align*}
i.e.,
\begin{align}
  f^{(d)}_{i}	= \frac{1}{(\dx)^d}\sum_{m = -\npt ^A} ^{\npt ^A} \mathrm{a}_m f_{i+m}.
\eqnlabel{padeExplicit}
\end{align}
It is obvious that central approximations which are spatially explicit can be obtained by substituting $\npt^B=0$ in \eqn{padeNonSymm}. Therefore, an optimized explicit central schemes is represented by \ofdC{\npt^A}{\npt^A}{0}{0}{\cdot}.
Constraints \eqn{specialNonSymmCon0} and \eqn{nonSingular} can be combined together as
\begin{align}
 \underbrace{\begin{bmatrix}
 \bdel_n^T(1) \\ \bdel_n^T(2) \\ \vdots \\ \bdel_n^T(\npt^A+1) \\ \vdots \\ \bdel_n^T(\NAug-1) \\ \bdel_n^T(\NAug)
\end{bmatrix}}_{\vo{\Delta}^T:=} \vo{b}_d = \begin{bmatrix}
													0 \\0 \\ \vdots \\ 1 \\ \vdots \\ 0 \\ 0
													\end{bmatrix} =  \bdel_n(\npt^A+1),
\eqnlabel{specialExplicitCon0}
\end{align}
for $n=\NAug$, where $\NAug=2\npt^A+1$. Since $\vo{\Delta}^T$ in the above equation is an identity matrix, the constraint reduces to
\begin{align}
\vo{b}_d  =  \bdel_n (\npt^A+1).  
\eqnlabel{specialExplicitCon}
\end{align}
 Therefore, optimal solution can be found analytically as
 \begin{align}
 \begin{pmatrix}\vo{a}_d^\ast\\\vo{b}_d^\ast\\\boldsymbol{\lambda}_d^\ast\end{pmatrix} = \left[\begin{array}{cc}
 \vo{Q}_d & \left(\begin{array}{cc}\vo{X}_d^T & -\vo{Y}_d^T\\
 \vo{0}_{\NAug \times \NAug} & \vo{I}_{\NAug \times \NAug}\end{array}\right)^T\\
  \left(\begin{array}{cc}\vo{X}_d^T & -\vo{Y}_d^T\\
 \vo{0}_{\NAug \times \NAug} & \vo{I}_{\NAug \times \NAug}\end{array}\right) & \vo{0}_{(\NAug+d+p+1)\times(\NAug+d+p+1)}
 \end{array}\right]^{-1}\begin{bmatrix}
 \vo{0}_{2\NAug\times 1}\\\vo{0}_{(d+p+1)\times 1} \\ \bdel_{\NAug}(\npt^A+1)
 \end{bmatrix}.
 \eqnlabel{specialExplicitOptimal}
 \end{align}
 The only difference between spatially implicit and explicit schemes is the structure of matrix $\vo{B}_d^\vo{\Phi}$. Since in the case of explicit schemes $\vo{b}_d$ is given by \eqn{specialExplicitCon0}, $\vo{B}_d^\vo{\Phi}$ becomes an identity matrix.
 Therefore stability analysis discussed in \sect{stability} holds true for explicit schemes as well with the substitution $\vo{B}_d^\vo{\Phi} = \vo{I}$ in \eqn{genlDisc}.

A similar framework exclusively for spatially explicit schemes has been presented in \cite{kumari2018unified} and optimal coefficients calculated using this framework match exactly with the ones obtained using \eqn{specialExplicitOptimal}.
In \cite{kumari2018unified}, authors extensively discuss error characteristics and stability analysis for optimal explicit schemes.

\begin{figure}[ht!]
\begin{center}
\includegraphics[width=0.6\textwidth]{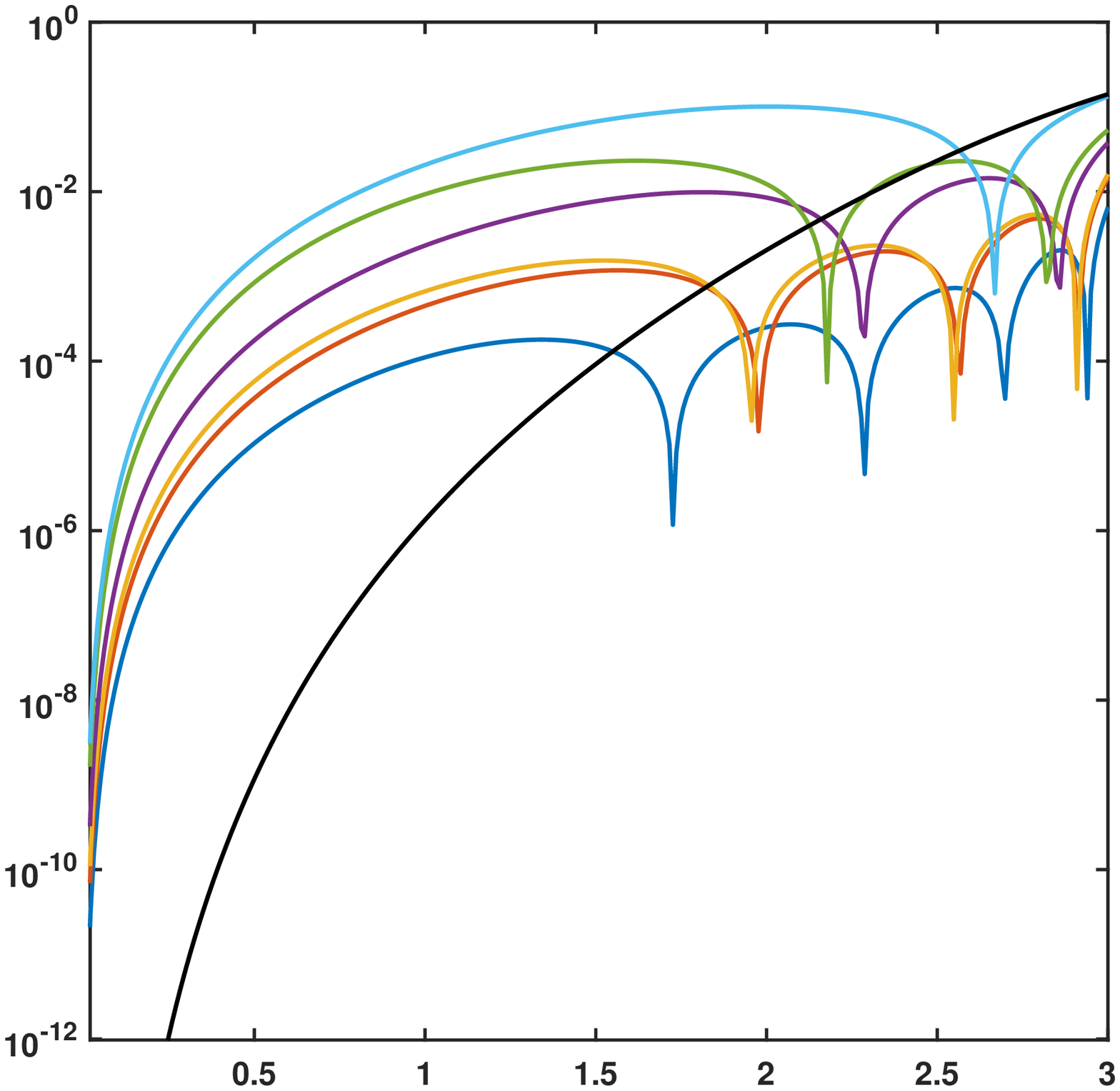}
\begin{picture}(0,0)
        \put(-280,120){\rotatebox{90} {$\Big|\frac{\mkdx^2}{\kdx^2} -1\Big|$}}
				\put(-140,0){$\kdx$}
\end{picture}
\caption{Error in modified wavenumber for various \ofdC{\npt^A}{\npt^A}{\npt^B}{\npt^B}{4} schemes approximating second derivative. Bottom to top: Black -- \sfdC{3}{3}{2}{2}{10}; blue -- \ofdC{3}{3}{3}{3}{4}; red -- \ofdC{3}{3}{2}{2}{4}; yellow -- \ofdC{2}{2}{3}{3}{4}; purple -- \ofdC{3}{3}{1}{1}{4}; green -- \ofdC{1}{1}{3}{3}{4}; cyan -- \ofdC{3}{3}{0}{0}{4}. $\NAug = 7$, $\gamma(\kdx) = 1$ for $\kdx\in[0, 3]$, and $\gamma(\kdx) = 0$ otherwise.}
\figlabel{M3Compare}
\end{center}
\end{figure}

A comparison of $\mkdx^2$ with $\kdx^2$ for optimal schemes derived using different values of $\npt^A$ and $\npt^B$ for fixed augmented stencil size $\NAug=7$ is shown in \fig{M3Compare}. It is clear that optimized scheme with equal LHS and RHS stencil sizes ($\npt^A = \npt^B$), i.e. \ofdC{3}{3}{3}{3}{4} (blue), performs better than all other optimized schemes with unequal LHS and RHS stencil sizes ($\npt^A \neq \npt^B$).
This is expected, since \ofdC{3}{3}{3}{3}{4} has the largest number of degrees of freedom in the optimization. We state this observation as the following remark.
\begin{remark} \label{rem:symmBest}
Among central compact optimized schemes which have same augmented stencil size, the scheme with $\npt^A =\npt^B$ demonstrates the best spectral accuracy.
\end{remark}

Note that, $\npt^A >\npt^B$ ($\npt^A = 3$, $\npt^B = 2$) for \ofdC{3}{3}{2}{2}{4} (red), and $\npt^A <\npt^B$ ($\npt^A = 2$, $\npt^B = 3$) for \ofdC{2}{2}{3}{3}{4} (yellow). These both schemes have same number of degrees of freedom, i.e., total sum of LHS and RHS stencil sizes is same for both of them. However, from \fig{M3Compare}, it can be seen that \ofdC{3}{3}{2}{2}{4} (red) performs slightly better than \ofdC{2}{2}{3}{3}{4} (yellow).
Similarly, $\npt^A >\npt^B$ for \ofdC{3}{3}{1}{1}{4} (purple), and $\npt^A <\npt^B$ for \ofdC{1}{1}{3}{3}{4} (green). Clearly, \ofdC{3}{3}{1}{1}{4} (purple) performs better than \ofdC{1}{1}{3}{3}{4} (green). Based on these observations, we infer the following.
\begin{remark}
Let $\npt_1>\npt_2$. Central compact optimized schemes with $\npt^A=M_1$ and $\npt^B=M_2$ i.e., $\npt^A>\npt^B$, demonstrate better spectral accuracy than those with $\npt^A=M_2$ and $\npt^B=M_1$ i.e., $\npt^A<\npt^B$.
\end{remark}

Finally, we observe that the central optimized explicit scheme \ofdC{3}{3}{0}{0}{4} (cyan) performs worse than all other central compact optimized schemes with equal $\NAug$. Again, this is expected, since the explicit scheme has the minimum number of degrees of freedom in the optimization.

For the standard pentadiagonal scheme \sfdC{3}{3}{2}{2}{10} (black), we see that the error is very small in the low wavenumber region. However, the error is large at higher wavenumbers. This leads to steep variation of error in the wavenumber space. On the other hand, the optimized pentadiagonal scheme \ofdC{3}{3}{2}{2}{4} (red) demonstrates relatively \textit{spectrally flat} behavior due to the weighting function $\wtfun=1$, which weights all wavenumbers equally.

\subsection{Biased schemes}
In the discussion so far, we have assumed the domain to be periodic, which enables us to use central schemes at all grid points in the domain.
Central schemes can not be used if the domain is not periodic or there are specified boundary conditions.
This is because of the fact that there are unequal number of usable grid points on either side of a point which lies near boundary of the domain.
Thus, we have to use \textit{one-sided} or \textit{biased} schemes near boundaries.
A finite difference approximation is said to be biased if the number of grid points used on the left and right side of a point at which derivative is calculated is not equal.
For the sake of discussion, in this special case we assume that $\npt^B_L=\npt^A_L=\npt_L$ and $\npt^B_R=\npt^A_R=\npt_R$.
Therefore, \eqn{padeMostGen} becomes
\begin{align}
\sum_{m = -\npt _L} ^{\npt _R} \mathrm{b}_m f^{(d)}_{i+m}	= \frac{1}{(\dx)^d}\sum_{m = -\npt _L} ^{\npt _R} \mathrm{a}_m f_{i+m},
\eqnlabel{padeNonCentral}
\end{align}
Thus, actual or true stencil size is $\NAc:= \npt _L + \npt _R + 1$. The augmented parameters are $\nptAug:= \max \{\npt _L,\npt _R \}$ and $\NAug:= 2\nptAug + 1$.
Note that, in special cases, biased scheme is \textit{backward} finite difference if $\npt _L \neq 0$ and $\npt _R = 0$, and \textit{forward} finite difference if $\npt _L = 0$ and $\npt _R \neq 0$.

Let $\npt _L > \npt _R$, and hence $\nptAug = \npt _L$. We refer this type of approximation as \textit{left-biased}. This biased scheme can be treated as a central scheme with augmented stencil size $\NAug = 2\npt _L + 1$,
and an extra imposed constraint that rightmost  $(\NAug - \NAc)$ coefficients to be zero. Therefore, we can use the central difference framework presented in \sect{implicitFramework} with $\npt = \npt _L$
and the following additional constraint
\begin{align}
\underbrace{\begin{bmatrix}
 \bdel_n^T(\NAc+1) \\ \bdel_n^T(\NAc+2) \\ \vdots \\ \bdel_n^T(\NAug)
\end{bmatrix}}_{\vo{\Delta}^T:=} \vo{a}_d = \underbrace{\begin{bmatrix}
 \bdel_n^T(\NAc+1) \\ \bdel_n^T(\NAc+2) \\ \vdots \\ \bdel_n^T(\NAug)
\end{bmatrix}}_{\vo{\Delta}^T:=} \vo{b}_d = \begin{bmatrix}
													0 \\0 \\ \vdots \\ 0
													\end{bmatrix},
\eqnlabel{specialNonCentralCon0}
\end{align}
for $n=\NAug$. Therefore, the constraint can be written as
\begin{align}
\begin{bmatrix} \vo{\Delta}^T & \vo{0}_{ (\NAug - \NAc) \times \NAug } \\ \vo{0}_{ (\NAug - \NAc) \times \NAug } & \vo{\Delta}^T \end{bmatrix}
\begin{bmatrix} \vo{a}_d \\ \vo{b}_d \end{bmatrix}  = \vo{0}_{2(\NAug - \NAc) \times 1}.
\eqnlabel{specialNonCentralCon}
\end{align}
Now, we can write the analytical optimal solution for \eqn{evenOptim} subject to \eqn{specialNonCentralCon} as follows
\begin{align}
\begin{pmatrix}\vo{a}_d^\ast\\\vo{b}_d^\ast\\\boldsymbol{\lambda}_d^\ast\end{pmatrix} = \vo{P}
\begin{bmatrix} \vo{0}_{2\NAug\times 1} \\ \vo{0}_{(d+p+1)\times 1} \\ 1 \\ \vo{0}_{2(\NAug - \NAc) \times 1} \end{bmatrix},
\eqnlabel{specialNonCentralOptimal}
\end{align}
where,
\begin{align*} \vo{P} =
\left[\begin{array}{cc}  \vo{Q}_d &  \left(\begin{array}{cc}\vo{X}_d^T & -\vo{Y}_d^T\\
\vo{0}_{1 \times \NAug} & \bdel_{\NAug}^T(\nptAug+1) \\
\vo{\Delta}^T & \vo{0}_{(\NAug - \NAc) \times \NAug} \\
\vo{0}_{(\NAug - \NAc) \times \NAug} & \vo{\Delta}^T \end{array}\right)^T\\
 \left(\begin{array}{cc}\vo{X}_d^T & -\vo{Y}_d^T\\
\vo{0}_{1 \times \NAug} & \bdel_{\NAug}^T(\nptAug+1) \\
\vo{\Delta}^T & \vo{0}_{(\NAug - \NAc) \times \NAug} \\
\vo{0}_{(\NAug - \NAc) \times \NAug} & \vo{\Delta}^T \end{array}\right) & \vo{0}_{(2(\NAug - \NAc)+d+p+2) \times (2(\NAug - \NAc)+d+p+2)}
\end{array}\right]^{-1}.
\end{align*}

Similarly, we can determine optimal \textit{right-biased} approximation for the case $\npt _L < \npt _R$ by setting leftmost $(\NAug - \NAc)$ coefficients to zero.
It can be proved that optimal coefficients of left-biased scheme ($\vo{a}^\ast _{d_L}, \vo{b}^\ast _{d_L}$) with $\npt _L = \npt_1$,  $\npt _R = \npt_2$ (where, $\npt_1>\npt_2$)
and its complementary right-biased scheme ($\vo{a}^\ast _{d_R}, \vo{b}^\ast _{d_R}$) with $\npt _L = \npt_2$,  $\npt _R = \npt_1$ satisfy
\begin{align*}
\vo{a}^\ast _{d_R} = \vo{J} \vo{a}^\ast _{d_L}; \quad \vo{b}^\ast _{d_R} &= \vo{J} \vo{b}^\ast _{d_L}, \quad \text{for even derivatives}, \\
\vo{a}^\ast _{d_R} = - \vo{J} \vo{a}^\ast _{d_L}; \quad \vo{b}^\ast _{d_R} &= \vo{J} \vo{b}^\ast _{d_L}, \quad \text{for odd derivatives},
\end{align*}
where $\vo{J}$ is an anti-diagonal identity matrix of the appropriate dimension.

\begin{figure}[ht!]
\begin{center}
\includegraphics[width=\textwidth]{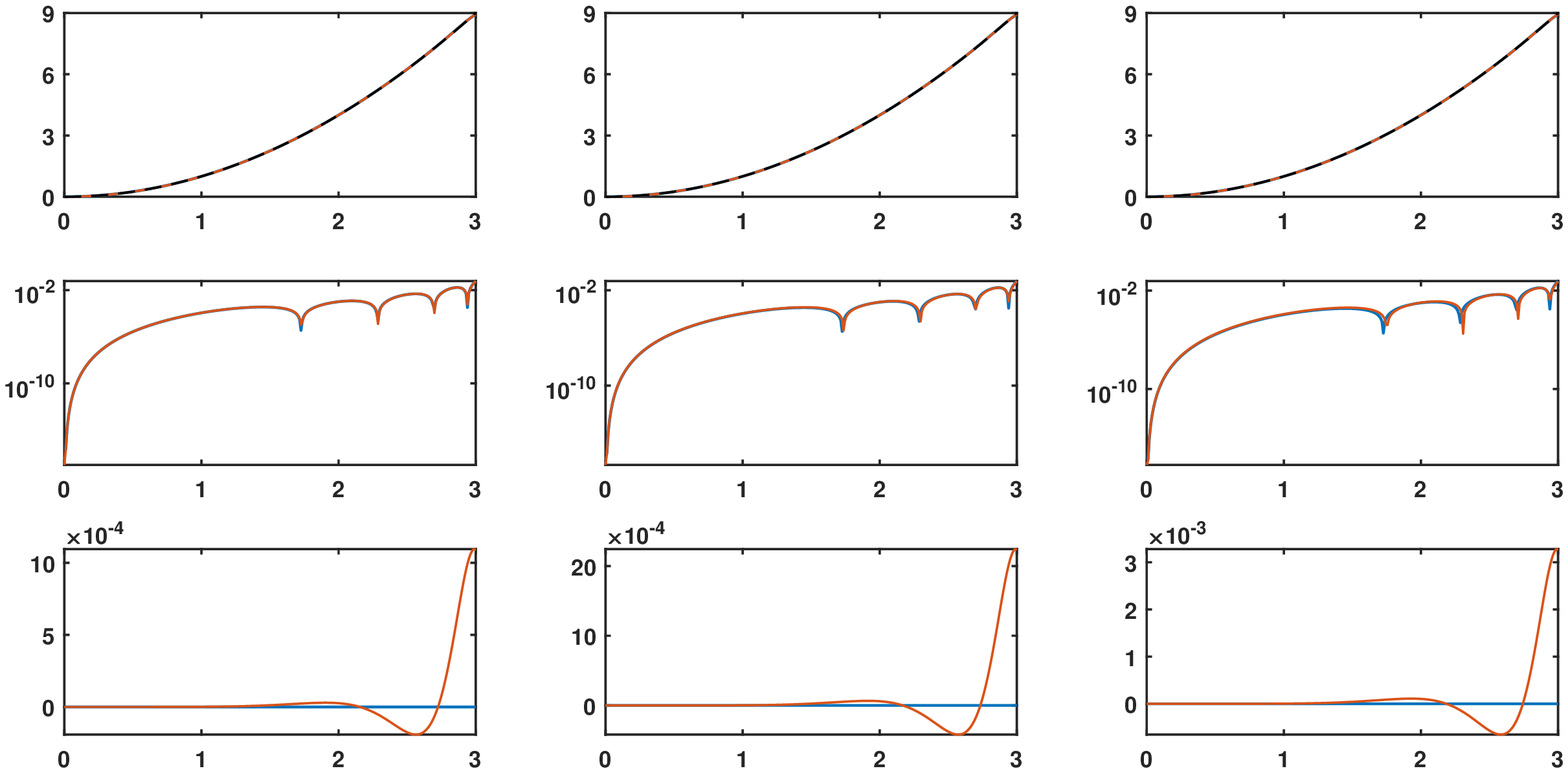}
\begin{picture}(0,0)
        \put(-250,210){\rotatebox{90} {$\mkdx ^2$}}
				\put(-250,115){\rotatebox{90} {$|\Re[e(\kdx)]|$}}
				\put(-250,35){\rotatebox{90} {$\Im[e(\kdx)]$}}
				\put(-165,5){$\kdx$}
				\put(0,5){$\kdx$}
				\put(165,5){$\kdx$}
				\put(-175,250){$\npt _L = 4$}
				\put(-10,250){$\npt _L= 5$}
				\put(153,250){$\npt _L= 6$}
\end{picture}
\caption{Modified wavenumber, and real and imaginary components of the spectral error for left-biased \ofd{\npt_L}{\npt_R}{\npt_L}{\npt_R}{4} approximating the second derivative. $\NAc = 7$, $\gamma(\kdx) = 1$ for $\kdx\in[0, 3]$, and $\gamma(\kdx) = 0$ otherwise.}
\figlabel{implicitspectralErrorNonPeriodD2}
\end{center}
\end{figure}

\begin{figure}[ht!]
\begin{center}
\includegraphics[width=\textwidth]{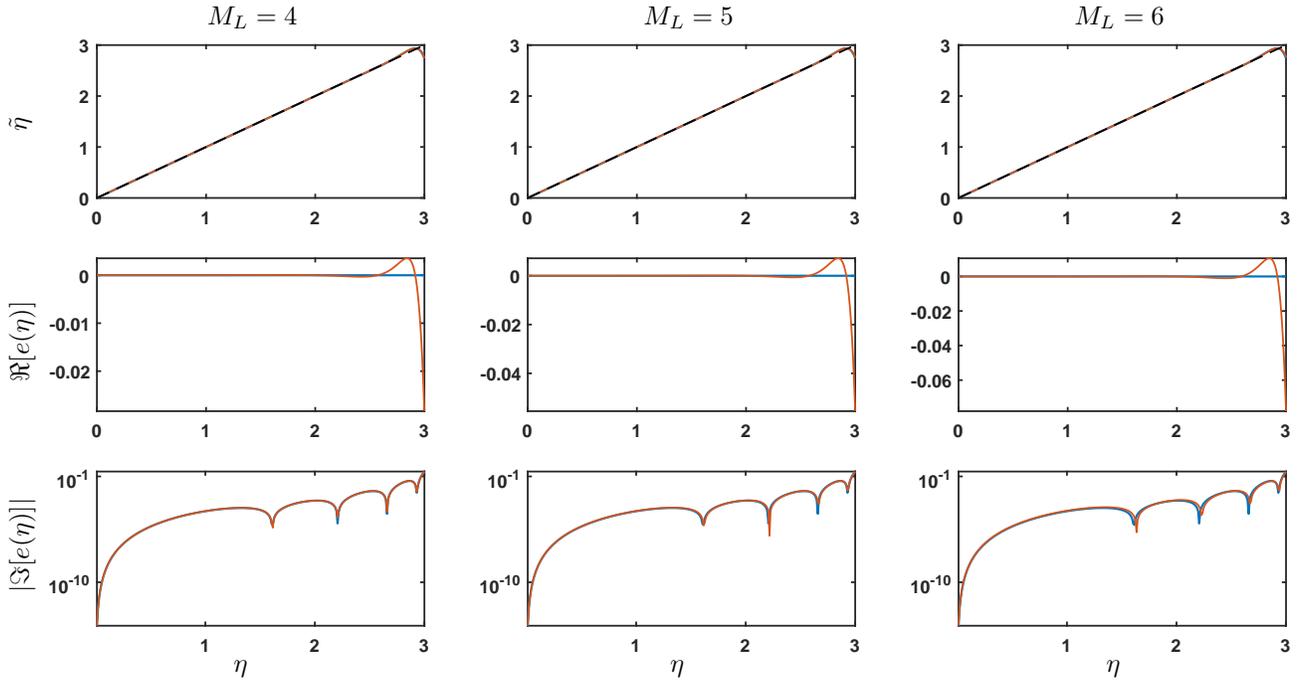}
\begin{picture}(0,0)
        \put(-250,210){\rotatebox{90} {$\mkdx$}}
				\put(-250,115){\rotatebox{90} {$\Re[e(\kdx)]$}}
				\put(-250,35){\rotatebox{90} {$|\Im[e(\kdx)]|$}}
				\put(-165,5){$\kdx$}
				\put(0,5){$\kdx$}
				\put(165,5){$\kdx$}
				\put(-175,250){$\npt _L = 4$}
				\put(-10,250){$\npt _L= 5$}
				\put(153,250){$\npt _L= 6$}
\end{picture}
\caption{Modified wavenumber, and real and imaginary components of the spectral error for left-biased \ofd{\npt_L}{\npt_R}{\npt_L}{\npt_R}{4} approximating the first derivative. $\NAc = 7$, $\gamma(\kdx) = 1$ for $\kdx\in[0, 3]$, and $\gamma(\kdx) = 0$ otherwise.}
\figlabel{implicitspectralErrorNonPeriodD1}
\end{center}
\end{figure}
\fig{implicitspectralErrorNonPeriodD2} and \fig{implicitspectralErrorNonPeriodD1} compare modified wavenumbers of different implicit left-biased schemes ($\npt _L= 4,5,6$, red) with the implicit central scheme ($\npt = 3$, blue) of equal true stencil size ($\NAc = 2\npt+1 = 7$) for second and first derivatives respectively.
Numerical values of the optimal coefficients are tabulated in Table \ref{table:stencilCoeffNonCentralD2} and  \ref{table:stencilCoeffNonCentralD1}.

From the middle row of \fig{implicitspectralErrorNonPeriodD2}, we observe that the variation of the real component $\Re[\err]$ (shown in log scale) for biased schemes (red) is very close to that of the central scheme of same stencil size (blue).

In case of central scheme (blue), the imaginary component $\Im[\err]$ for even derivatives is zero, which is seen in the last row of \fig{implicitspectralErrorNonPeriodD2} (see Lemma \ref{lem:evenImgZero} and \fig{implicitspectralError2}). And as expected, the imaginary component $\Im[\err]$ for biased schemes (red) which approximate even derivatives is not zero.
This is due to the fact that Lemma \ref{lem:evenImgZero} does not hold for biased schemes as constraint \eqn{specialNonCentralCon} imposes asymmetry on optimal coefficients. For fixed true stencil size, as $\npt_L$ increases, $\npt_R = \NAc-(\npt_L+1)$ decreases and schemes become more biased.
It can be observed from the last row of \fig{implicitspectralErrorNonPeriodD2} that the imaginary component $\Im[\err]$ increases with increase in biasedness of the scheme.

Similarly, the middle row of \fig{implicitspectralErrorNonPeriodD1} shows that unlike the central scheme (blue), real component $\Re[\err]$ for biased schemes (red) which approximate odd derivatives is not zero, and its magnitude increases with the increase in biasedness of the scheme.
However, the variation of imaginary component $\Im[\err]$ (shown in log scale) for biased schemes (red) is very close to that of the central scheme (blue).

While solving PDEs numerically, we use a central scheme of stencil size ($2\npt+1$) at ($N_p - 2\npt$) points within the domain and biased schemes at $\npt$ points adjacent to each one of left and right boundaries of the domain.
Therefore, ($N_p - 2\npt$) middle rows  of $\vo{A}_d$ and $\vo{B}_d$ defined in \eqn{defAdBd} contain same central scheme coefficients, last $M$ rows contain left-biased scheme coefficients and first $M$ rows contain complementary right-biased scheme coefficients.
Then $\vo{A}_d^\vo{\Phi}$ and $\vo{B}_d^\vo{\Phi}$ for such non-periodic discretization can be constructed similar to \eqn{AdBd}.
Semi-discrete stability discussed in \sect{semiDiscStability} is not valid for biased approximations since coefficients do not satisfy properties described in Lemma \ref{lem:evenImgZero} and \ref{lem:oddRealZero}.
However, \eqn{genlDisc} and \eqn{semiDiscODE} hold true for non-periodic discretization. Therefore, fully discretized stability analysis presented in \sect{fullyDiscStabilityRK} is valid for this case.

Special cases discussed in this section are a few examples which demonstrate how to derive optimal finite difference schemes with desired structure by imposing suitable constraints on coefficients.
Since all constraints are linear, one can impose any number and combination of constraints \eqn{specialNonSymmCon0}, \eqn{specialExplicitCon} and \eqn{specialNonCentralCon0} to obtain desirable schemes.
For example, we can impose constraints in \eqn{specialExplicitCon} and \eqn{specialNonCentralCon0} simultaneously and solve the resulting optimization problem to derive biased schemes which are spatially explicit.
However, to ensure the invertibility of the matrix $\vo{P}$, such as in \eqn{specialNonCentralOptimal}, duplicate constraints should be removed while constructing the KKT condition.

\section{Numerical Results} \label{numericalResults}
In this section we use optimized schemes derived in preceding sections to solve some PDEs numerically and compare the results with the standard schemes.
In particular, we benchmark our optimized schemes against following:
\begin{enumerate}
\item[(a)] Standard schemes of same order
\item[(b)] Standard schemes of same stencil size
\end{enumerate}

Initial condition for solving PDEs is taken to be the superposition of multiple sinusoidal waves with wavenumbers $k$, given by \eqn{initF}.
$A(k)$ is amplitude of the wave corresponding to wavenumber $k$, and $\phi_k \in [0,2\pi]$ is the corresponding phase initialized randomly.
We use the same initial condition for solving different PDEs in sections to follow.

\begin{align}
f_0(x):=f(x,t=0) = \sum _k A(k)\sin(kx + \phi _k).
\eqnlabel{initF}
\end{align}

\subsection{Advection-diffusion equation}
Consider the following simplified version of \eqn{pde}, where coefficients of only first two spatial derivatives are non-zero.
\begin{align}
\frac{\partial f}{\partial t} = \beta_1 \frac{\partial f}{ \partial x} + \beta_2 \frac{\partial ^2 f}{ \partial x^2}.
\eqnlabel{pdeAdvDiff}
\end{align}
To satisfy \eqn{semiDiscStabilityIneqGen}, we have diffusivity $\beta_2>0$. Analytical solution for \eqn{pdeAdvDiff} is known and given by
\begin{align}
f_t(x)_a := f(x,t>0)_a = \sum _k \exp({-\beta_2 k^2 t})A(k)\sin(k(x+\beta_1 t) + \phi _k),
\eqnlabel{advDiffSol}
\end{align}
where subscript $a$ stands for the analytical solution. In this case, amplitude decays with time due to non-zero $\beta_2$ and phase also changes due to  non-zero $\beta_1$.

We used CFL number to express $\dt$ and $\dx$ as a normalized quantity. Similarly, we define normalized time with respect to $d^{th}$ derivative as $t^\ast _d:= |\beta_d| t k_{\max}^d$, where $k_{\max}$ is the largest wavenumber present in the initial condition \eqn{initF}.
\eqn{pdeAdvDiff} is solved using various optimized and standard spatial finite difference schemes. For a fixed diffusive CFL ($r_2$), we have used matching order explicit Runge-Kutta (ERK) schemes for the temporal discretization. For example, for spatial schemes which are $4^{\text{th}}$ and $10^{\text{th}}$ order accurate, we have used $2^{\text{nd}}$ and $5^{\text{th}}$ order ERK for the temporal discretization respectively.

The spectral energy at wavenumber $k$ at time $t$ is defined to be $|\hat{f}_t(k)|^2$, where $\hat{f}_t(k)$ is the discrete Fourier coefficient corresponding to wavenumber $k$ at time $t$.
To assess the dissipative numerical errors at different wavenumbers, we plot normalized error in spectral energy of the numerical solution with respect to the analytical solution in \Fig{spectralErrAdvectionDiffusion}.
 These results are obtained for diffusive CFL number $r_2 = 0.01$ at $t^\ast_2 \approx 25$.

Error is small for the standard scheme, \sfdC{1}{1}{1}{1}{4} (cyan), in low wavenumber region. However, error grows drastically as wavenumber increases. The standard pentadiagonal scheme, \sfdC{3}{3}{2}{2}{10} (black), shows better spectral accuracy than \sfdC{1}{1}{1}{1}{4} across all wavenumbers. Although, error in high wavenumber region is significantly large for \sfdC{3}{3}{2}{2}{10} as well.

The optimized pentadiagonal scheme, \ofdC{3}{3}{2}{2}{4} (red), shows better accuracy than \sfdC{1}{1}{1}{1}{4} (cyan) at all wavenumbers. \ofdC{3}{3}{2}{2}{4} has larger error than \sfdC{3}{3}{2}{2}{10} (black) at lower wavenumbers. But at higher wavenumbers, the $4^\text{th}$ order accurate \ofdC{3}{3}{2}{2}{4} demonstrates much better accuracy than $10^\text{th}$ order accurate \sfdC{3}{3}{2}{2}{10}.

Overall, \ofdC{3}{3}{3}{3}{4} (blue) shows better accuracy than pentadiagonal \ofdC{3}{3}{2}{2}{4} (red), which is in accordance with Remark \ref{rem:symmBest}.

Both standard schemes, \sfdC{1}{1}{1}{1}{4} (cyan) and \sfdC{3}{3}{2}{2}{10} (black), demonstrate steep variation of error as wavenumber is increased. On the other hand, optimized schemes \ofdC{3}{3}{2}{2}{4} (red) and \ofdC{3}{3}{3}{3}{4} (blue) show relatively slower variation of error with increasing wavenumber. Again, this behavior is expected due to equally weighted wavenumbers ($\wtfun=1$) in the optimization.

\begin{figure}[ht!]
\begin{center}
\includegraphics[width=0.5\textwidth]{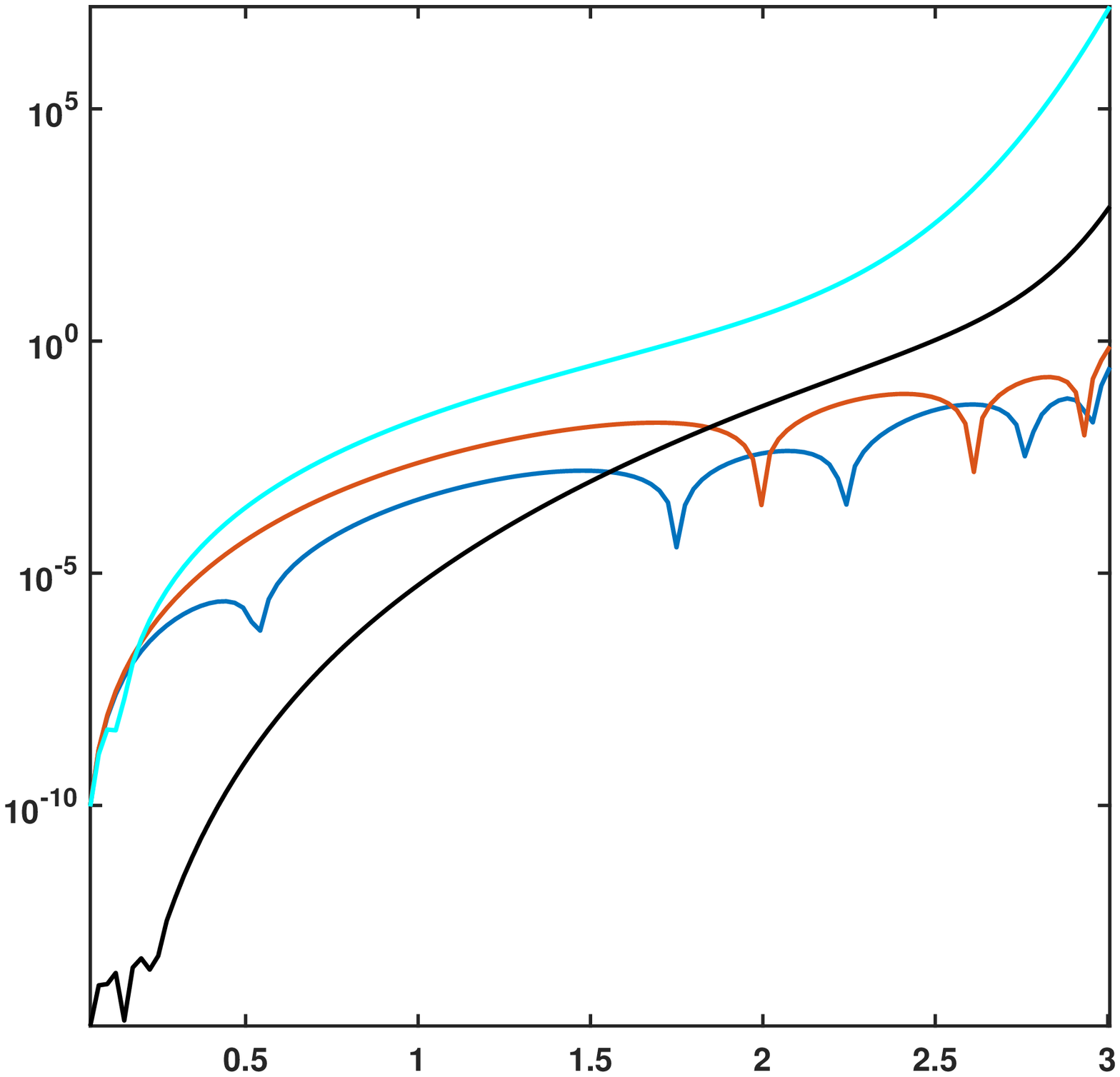}
\begin{picture}(0,0)
        \put(-260,100){\rotatebox{90} {$\Big| \Big|\frac{\hat{f}_t(k)}{\hat{f}_t(k)_a}\Big|^2 -1 \Big|$}}
				\put(-120,-10){$\kdx$}
\end{picture}
\caption{Diffusion error for advection-diffusion equation. Blue -- \ofdC{3}{3}{3}{3}{4}; red -- \ofdC{3}{3}{2}{2}{4}; black -- \sfdC{3}{3}{2}{2}{10}; cyan -- \sfdC{1}{1}{1}{1}{4}. $r_1\approx r_2 = 0.01$, $t^\ast_2 \approx 25$, $A(k) = 1$, $\gamma(\kdx) = 1$ for $\kdx\in[0, 3]$, and $\gamma(\kdx) = 0$ otherwise.}
\figlabel{spectralErrAdvectionDiffusion}
\end{center}
\end{figure}

While \Fig{spectralErrAdvectionDiffusion} shows numerical dissipation error, the dispersion error is shown in \fig{spectralErrAdvectionDiffusionSpeed}.
Dispersion error is quantified by calculation of numerical speed, $c_n(k):= \arg[\hat{f}_t(k)/\hat{f}_0(k)]/kt$. Analytical speed is $c_a := \beta_1$ for all wavenumbers.
Normalized numerical speed, $c^{\ast}(k):=c_n(k)/c_a$, is unity when there is no dispersion error. Normalized numerical speed and error for different schemes are shown in \fig{spectralErrAdvectionDiffusionSpeed}.

Observations made for \Fig{spectralErrAdvectionDiffusion} are consistent with \fig{spectralErrAdvectionDiffusionSpeed} as well. From  \fig{spectralErrAdvectionDiffusionSpeed}, it is evident that normalized numerical speed for standard schemes, \sfdC{1}{1}{1}{1}{4} (cyan) and \sfdC{3}{3}{2}{2}{10} (black), deviates from unity at lower wavenumbers leading to large dispersion error at higher wavenumbers.
The normalized numerical speed for optimized schemes \ofdC{3}{3}{2}{2}{4} (red) and \ofdC{3}{3}{3}{3}{4} (blue), is close to unity even at higher wavenumbers.
As in the case of dissipative error, here also we observe that the standard schemes show steep variation of dispersion error, while the optimized schemes are relatively slower in variation.

\begin{figure}[ht!]
\begin{center}
\includegraphics[width=\textwidth]{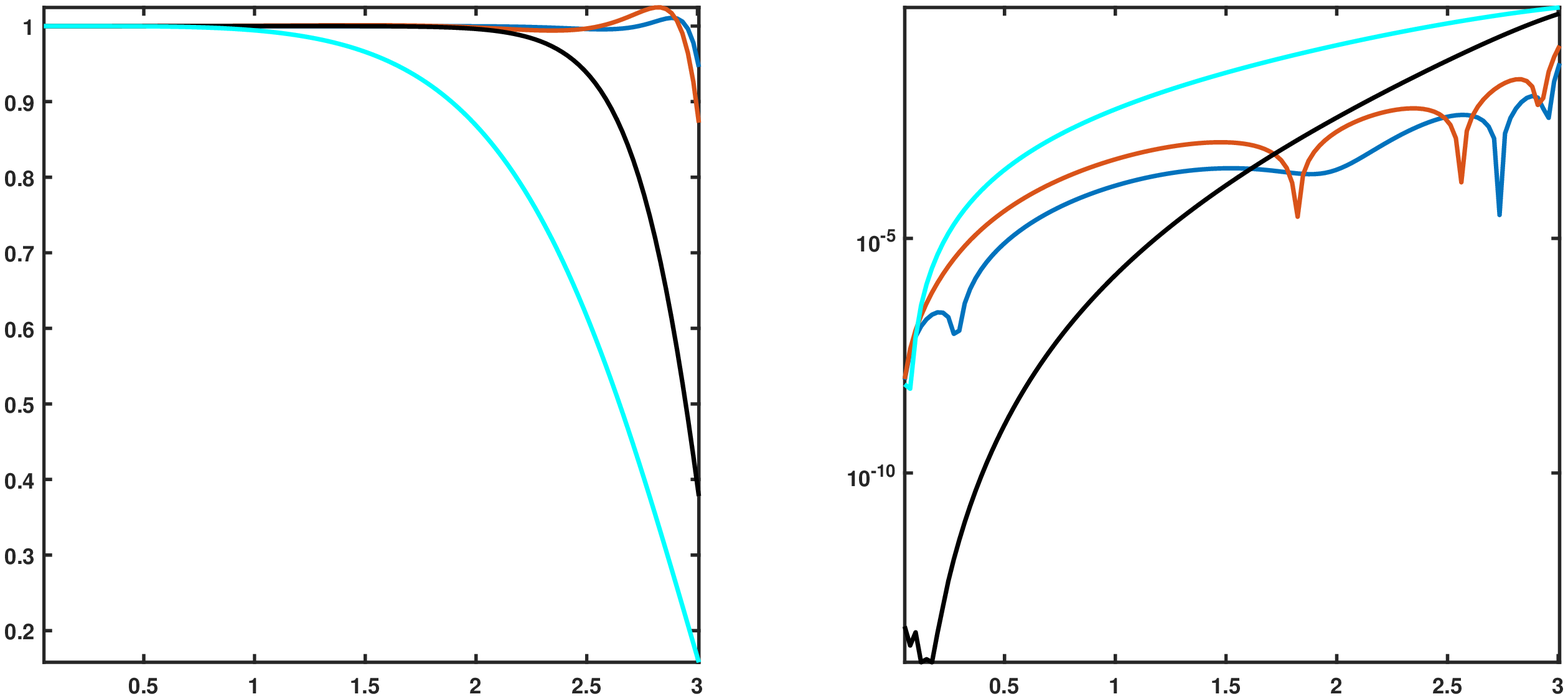}
\begin{picture}(0,0)
				\put(-255,120){\rotatebox{90} {$c^{\ast}$}}
				\put(0,105){\rotatebox{90} {$|c^{\ast} -1|$}}
				\put(-130,-2){$\kdx$}
				\put(130,-2){$\kdx$}
\end{picture}
\caption{Dispersion error for advection-diffusion equation. Blue -- \ofdC{3}{3}{3}{3}{4}; red -- \ofdC{3}{3}{2}{2}{4}; black -- \sfdC{3}{3}{2}{2}{10}; cyan -- \sfdC{1}{1}{1}{1}{4}. $r_1\approx r_2 = 0.01$, $t^\ast_2 \approx 25$, $A(k) = 1$, $\gamma(\kdx) = 1$ for $\kdx\in[0, 3]$, and $\gamma(\kdx) = 0$ otherwise.}
\figlabel{spectralErrAdvectionDiffusionSpeed}
\end{center}
\end{figure}

\subsection{Non-linear advection-diffusion equation}
Now let us consider non-linear advection-diffusion equation which is also known as Burgers' equation and widely studied in various areas of applied mathematics.
\begin{align}
\frac{\partial f}{\partial t} = -f \frac{\partial f}{ \partial x} + \beta_2 \frac{\partial ^2 f}{ \partial x^2}.
\eqnlabel{pdeNonLinAdvDiff}
\end{align}
Because of the non-linear term, different Fourier modes interact with each other and produce new wavenumbers in the solution. Therefore, to ensure that all wavenumbers are
well resolved, we performed grid convergence study and observed that space-averaged energy ($K:=\sum _{i=1} ^ {N_p} f_i^2(x,t)/N_p$) becomes independent of the number of grid points in the domain at $N_p = 256$.

Numerical solution of \eqn{pdeNonLinAdvDiff} is obtained in a periodic domain $x\in[0,2\pi]$ with initial condition given by \eqn{initF}, where $A(k) = k^{-1/2}$. The negative exponent ensures that
higher wavenumbers have sufficient energy content without being unstable.
For this particular PDE, we define timescale as $t = K/ \epsilon$, where $\epsilon$ is space averaged energy dissipation rate ($\epsilon:=\sum _{i=1} ^ {N_p} (\partial f_i(x,t)/ \partial x)^2/N_p$).
Then time is expressed as a normalized quantity $t^\ast := t/t_0$ where $t_0 = K_0/ \epsilon_0$ is the initial timescale.

The analytical solution of \eqn{pdeNonLinAdvDiff} is obtained by applying Cole-Hopf transformation as follows \cite{eberhard1950, cole1951}.
First, let us define the transformation
\begin{align*}
f = -2\beta_2\frac{1}{\phi}\frac{\partial \phi}{ \partial x}.
\end{align*}
With this substitution, \eqn{pdeNonLinAdvDiff} becomes diffusion equation and the analytical solution for $f(x,t)$ is given by
\begin{align}
f(x,t) = \frac{\int _{-\infty} ^{\infty} \frac{x-y}{t} \phi(y,0) \exp \Big(\frac{(x-y)^2}{4\beta_2 t} \Big) dy} {\int _{-\infty} ^{\infty} \phi(y,0) \exp \Big(\frac{(x-y)^2}{4\beta_2 t} \Big) dy},
\eqnlabel{nonLinAdvDiffSol}
\end{align}
where initial $\phi(y,0)$ is obtained using
\begin{align*}
\phi(y,0) = \exp\Big(  -\int_0 ^y \frac{f(z,0)}{2\beta_2} dz\Big).
\end{align*}
\eqn{nonLinAdvDiffSol} is the exact solution for non-linear advection-diffusion equation. However, integrals involved are computed numerically.

\fig{spectralErrNonLinAdvDiffEnergyContent} shows the spectral energy content normalized with initial energy for different wavenumbers. All numerical results for this PDE are obtained for $\beta_2 = 0.04$, $r_2 = 0.01$ at $t^\ast \approx 104$.
It can be clearly seen that the spectral energy content for standard schemes \sfdC{1}{1}{1}{1}{4} (cyan) and \sfdC{3}{3}{2}{2}{10} (black solid) deviates from analytical solution (black dashed) at high wavenumbers. In the same wavenumber region, the optimized schemes \ofdC{3}{3}{2}{2}{4} (red) and \ofdC{3}{3}{3}{3}{4} (blue) are relatively closer to the analytical solution.

\begin{figure}[ht!]
\begin{center}
\includegraphics[width=0.65\textwidth]{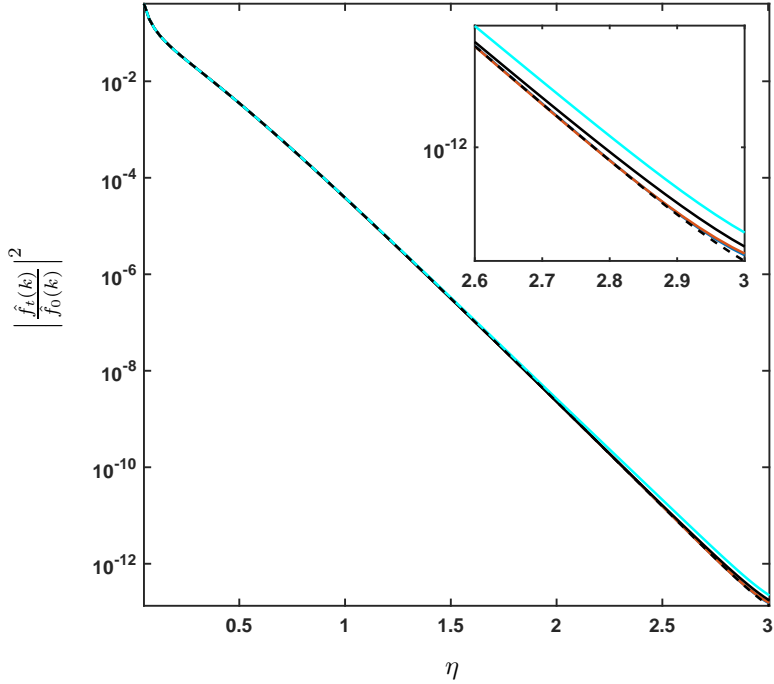}
\begin{picture}(0,0)
	\put(-320,135){\rotatebox{90} {$\Big|\frac{\hat{f}_t(k)}{\hat{f}_0(k)}\Big|^2$}}
	\put(-155,5){$\kdx$}
\end{picture}
\caption{Spectral energy content for non-linear advection-diffusion equation normalized with initial energy. Blue -- \ofdC{3}{3}{3}{3}{4}; red -- \ofdC{3}{3}{2}{2}{4}; black -- \sfdC{3}{3}{2}{2}{10}; cyan -- \sfdC{1}{1}{1}{1}{4}; black dashed -- analytical solution. $\beta_2 = 0.04$, $r_2 = 0.01$, $A(k) = k^{-1/2}$, $t^\ast \approx 104$, $k_{\max} = 121$, $\gamma(\kdx) = 1$ for $\kdx\in[0, 3]$, and $\gamma(\kdx) = 0$ otherwise.}
\figlabel{spectralErrNonLinAdvDiffEnergyContent}
\end{center}
\end{figure}

Similar to \fig{spectralErrAdvectionDiffusion}, we show the normalized error in spectral energy for the non-linear advection-diffusion equation in \fig{spectralErrNonLinAdvDiff}. Due to the interaction of Fourier modes with each other, in general, we do not expect the spectral behavior of different schemes for non-linear PDE to be in exact agreement with the behavior observed for the linear case.
Unlike the linear case (\fig{spectralErrAdvectionDiffusion}), \fig{spectralErrNonLinAdvDiff} shows that the optimized schemes \ofdC{3}{3}{2}{2}{4} (red) and \ofdC{3}{3}{3}{3}{4} (blue) have better accuracy than \sfdC{3}{3}{2}{2}{10} (black) in both low and high wavenumber region. Overall, \sfdC{1}{1}{1}{1}{4} (cyan) has the largest error. The error for \ofdC{3}{3}{3}{3}{4} is somewhat comparable to the pentadiagonal \ofdC{3}{3}{2}{2}{4}.

\begin{figure}[ht!]
\begin{center}
\includegraphics[width=0.5\textwidth]{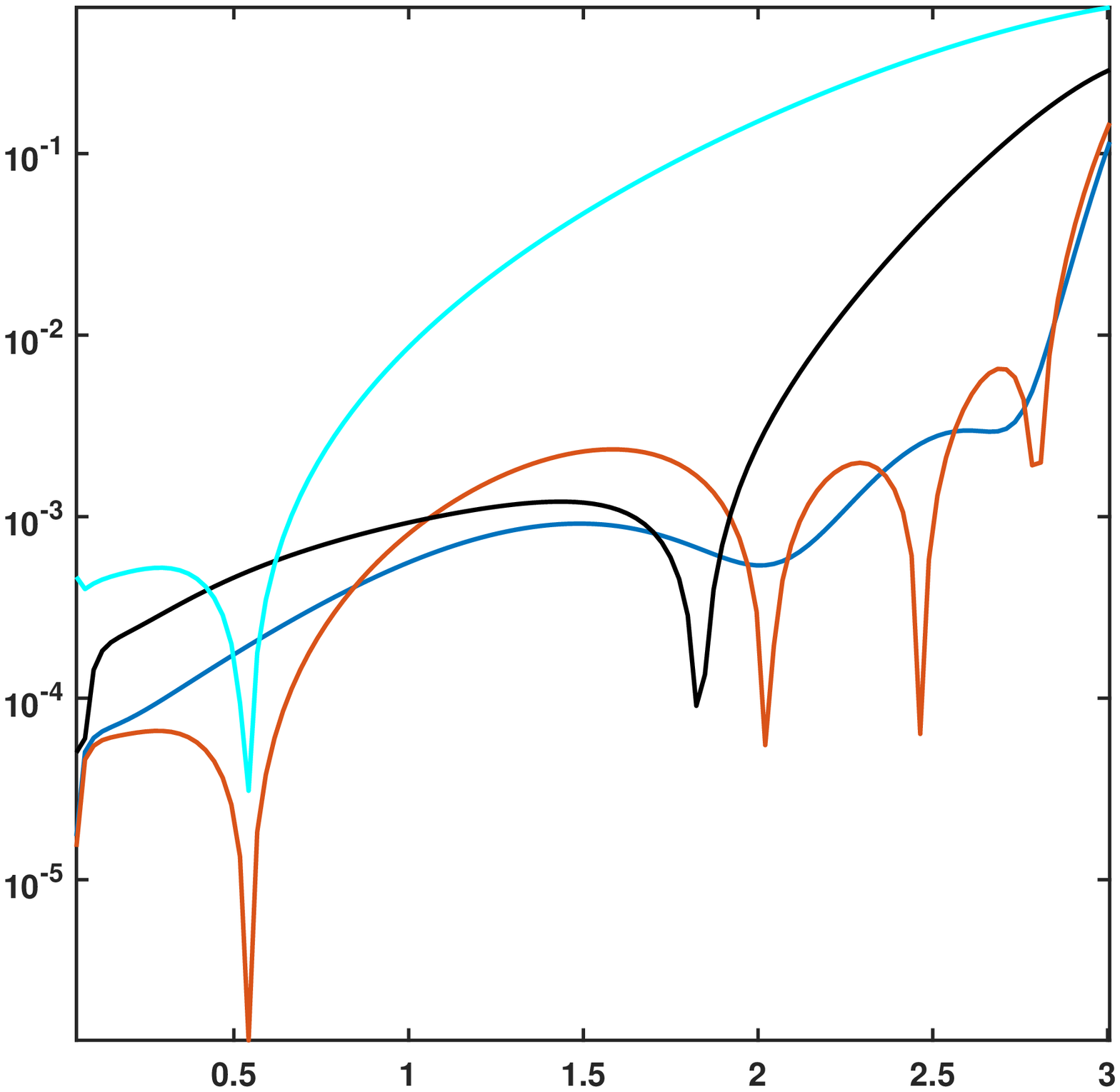}
\begin{picture}(0,0)
				\put(-260,100){\rotatebox{90} {$\Big| \Big|\frac{\hat{f}_t(k)}{\hat{f}_t(k)_a}\Big|^2 -1 \Big|$}}
				\put(-120,-10){$\kdx$}
\end{picture}
\caption{Diffusion error for non-linear advection-diffusion equation. Blue -- \ofdC{3}{3}{3}{3}{4}; red -- \ofdC{3}{3}{2}{2}{4}; black -- \sfdC{3}{3}{2}{2}{10}; cyan -- \sfdC{1}{1}{1}{1}{4}. $\beta_2 = 0.04$, $r_2 = 0.01$, $A(k) = k^{-1/2}$, $t^\ast \approx 104$, $k_{\max} = 121$, $\gamma(\kdx) = 1$ for $\kdx\in[0, 3]$, and $\gamma(\kdx) = 0$ otherwise.}
\figlabel{spectralErrNonLinAdvDiff}
\end{center}
\end{figure}

Because of the non-linearity, there is no well-defined analytical speed in this case and therefore, the notion of numerical speed can not be used to quantify the dispersion error.
However, we can calculate and compare the argument of Fourier coefficients, $\hat{\theta_k}:=\arg[\hat{f}_t(k)]$, for different wavenumbers. Normalized error in $\hat{\theta_k}$ is shown in \fig{spectralErrNonLinAdvDiffPhase}. Clearly, the optimized schemes \ofdC{3}{3}{2}{2}{4} (red) and \ofdC{3}{3}{3}{3}{4} (blue) have smaller phase error than the standard schemes \sfdC{1}{1}{1}{1}{4} (cyan) and \sfdC{3}{3}{2}{2}{10} (black) across all wavenumbers.
Whereas, in the linear case (\fig{spectralErrAdvectionDiffusionSpeed}) \sfdC{3}{3}{2}{2}{10} demonstrates better accuracy than the optimized schemes in the low wavenumber region. Also, \ofdC{3}{3}{3}{3}{4} shows better accuracy than the pentadiagonal \ofdC{3}{3}{2}{2}{4} in \fig{spectralErrNonLinAdvDiffPhase}.

\begin{figure}[ht!]
\begin{center}
\includegraphics[width=0.5\textwidth]{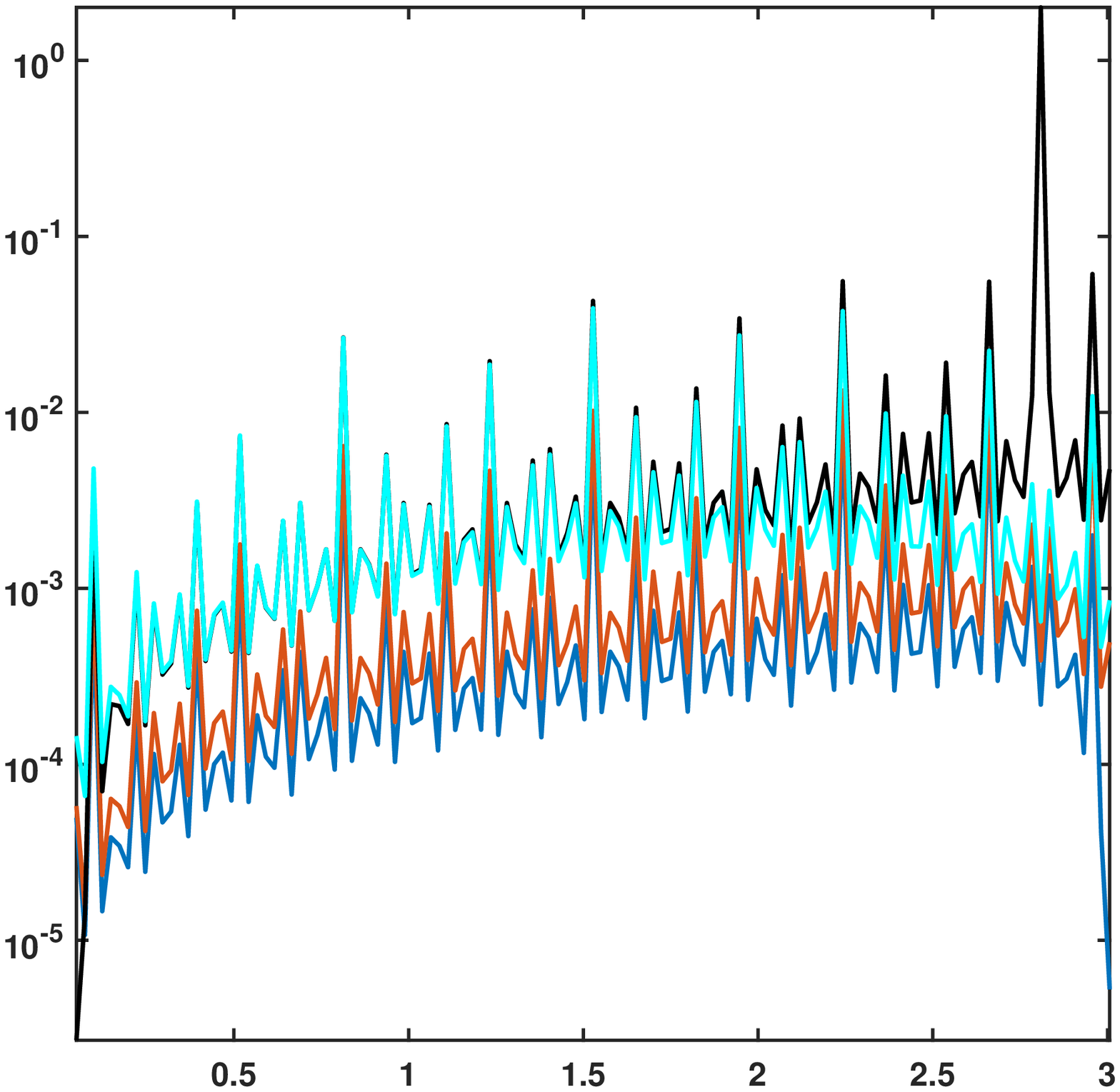}
\begin{picture}(0,0)
        \put(-260,100){\rotatebox{90} {$\Big| \frac{\hat{\theta_k}}{\hat{\theta_k}_a} -1 \Big|$}}
				\put(-120,-10){$\kdx$}
\end{picture}
\caption{Phase error for non-linear advection-diffusion equation. Blue -- \ofdC{3}{3}{3}{3}{4}; red -- \ofdC{3}{3}{2}{2}{4}; black -- \sfdC{3}{3}{2}{2}{10}; cyan -- \sfdC{1}{1}{1}{1}{4}. $\beta_2 = 0.04$, $r_2 = 0.01$, $A(k) = k^{-1/2}$, $t^\ast \approx 104$, $k_{\max} = 121$, $\gamma(\kdx) = 1$ for $\kdx\in[0, 3]$, and $\gamma(\kdx) = 0$ otherwise.}
\figlabel{spectralErrNonLinAdvDiffPhase}
\end{center}
\end{figure}

\fig{spectralErrNonLinAdvDiff} and \fig{spectralErrNonLinAdvDiffPhase} independently quantify the errors in $|\hat{f}_t(k)|$ and $\hat{\theta_k}$. \fig{spectralErrNonLinAdvDiffAmpPhase} accounts for $|\hat{f}_t(k) - \hat{f}_t(k)_a|$, which quantifies the combined effect of error in $|\hat{f}_t(k)|$ and $\hat{\theta_k}$ in a single plot. When we look at the combined error in the non-linear case, it is observed from \fig{spectralErrNonLinAdvDiffAmpPhase} that the optimized schemes
\ofdC{3}{3}{2}{2}{4} (red) and \ofdC{3}{3}{3}{3}{4} (blue) have better accuracy than the standard schemes \sfdC{1}{1}{1}{1}{4} (cyan) and \sfdC{3}{3}{2}{2}{10} (black) at all wavenumbers. Moreover, \ofdC{3}{3}{3}{3}{4} demonstrates better accuracy than the pentadiagonal \ofdC{3}{3}{2}{2}{4} and hence reinforcing Remark \ref{rem:symmBest}.
Thus, for solving practical physical problems which are governed by non-linear PDEs, optimized schemes seem to provide much better spectral resolution than the non-optimized standard schemes.

\begin{figure}[ht!]
\begin{center}
\includegraphics[width=0.5\textwidth]{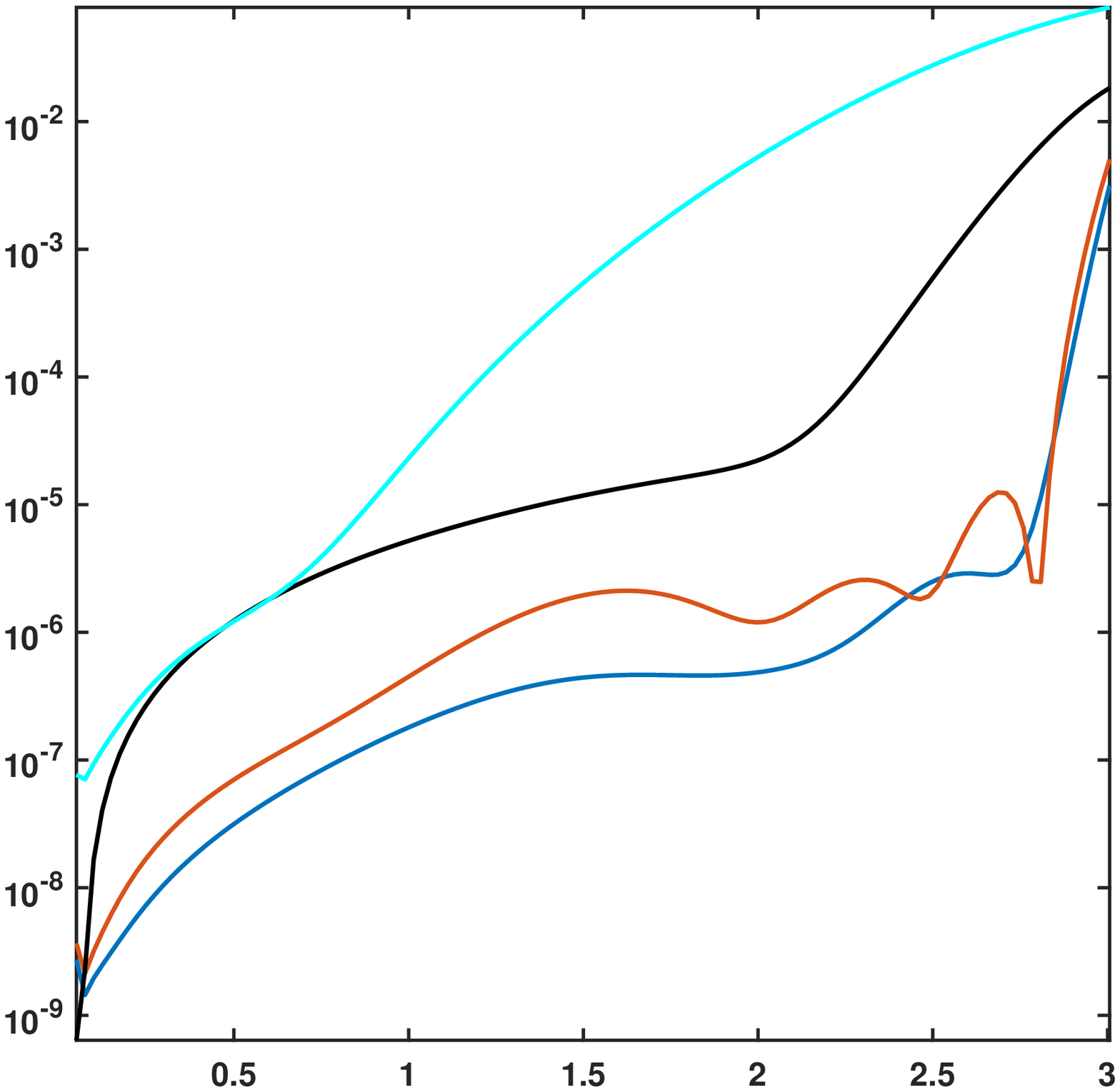}
\begin{picture}(0,0)
				\put(-260,100){\rotatebox{90} {$\Big|\frac{\hat{f}_t(k)}{\hat{f}_t(k)_a} -1 \Big|^2$}}
				\put(-120,-10){$\kdx$}
\end{picture}
\caption{Amplitude error for non-linear advection-diffusion equation. Blue -- \ofdC{3}{3}{3}{3}{4}; red -- \ofdC{3}{3}{2}{2}{4}; black -- \sfdC{3}{3}{2}{2}{10}; cyan -- \sfdC{1}{1}{1}{1}{4}. $\beta_2 = 0.04$, $r_2 = 0.01$, $A(k) = k^{-1/2}$, $t^\ast \approx 104$, $k_{\max} = 121$, $\gamma(\kdx) = 1$ for $\kdx\in[0, 3]$, and $\gamma(\kdx) = 0$ otherwise.}
\figlabel{spectralErrNonLinAdvDiffAmpPhase}
\end{center}
\end{figure}

\section{Conclusions} \label{conclusions}

Central compact standard finite differences are derived using the popular pentadiagonal 7 point stencil formulation to maximize formal order of accuracy of the approximation. In this paper, we presented a generalized unified framework to derive central compact optimal schemes with equal LHS and RHS stencil sizes.
The optimal coefficients are determined analytically by solving an optimization problem. The cost function, defined to be the $\mathcal{L}_2$ norm of spectral error, is minimized subject to linear equality constraints in coefficients. We have shown that for a given order of accuracy, we can increase the stencil size and
hence the number of degrees of freedom in the optimization to achieve better spectral resolution. The freedom of selecting weighting function ($\wtfun$) allows us to achieve better resolution of the wavenumbers which are important to the physics of problem by giving them more weightage in the optimization problem.
We have also presented an analytical justification for the symmetry/skew-symmetry of optimal coefficients resulting in purely real or imaginary error which depends on order of the derivative being approximated.

We have shown that special cases namely, central schemes with unequal LHS and RHS stencil sizes (e.g. pentadiagonal), explicit, and biased schemes can be derived by imposing additional linear constraints in the optimization problem. Using these three special cases as building blocks,
an optimal scheme with any desired structure can be derived as all constraints are linear and therefore, they can be imposed in any number and combination.

We have also presented a rigorous stability analysis for a generalized implicit RK scheme. We showed that by increasing the stencil size the spectral resolution can be improved, but simultaneously it results in reduced stability region in $r_1-r_2$ space.

Finally, we presented numerical results for optimized schemes to benchmark them against the standard schemes of the same accuracy order, and the same stencil size. The optimized schemes show better accuracy than the standard scheme of the same stencil size with larger formal order of accuracy in the high wavenumber region. The better resolution in
high wavenumber region is achieved at the expense of relatively larger error at the lower wavenumbers, because $\wtfun=1$ used in simulations weights all wavenumbers equally. However, in the case of non-linear PDE, we observed that the optimized schemes show overall better accuracy than the standard schemes across the wavenumber spectrum.

The observation, compact schemes with equal LHS and RHS stencil sizes demonstrate better performance than those with unequal stencil sizes, has been reinforced through numerical simulations. However, we recognize that the usage of schemes with equal LHS and RHS stencil sizes e.g., \ofdC{3}{3}{3}{3}{\cdot},
might prove to be impractical for large simulations as the computation cost for \ofdC{3}{3}{3}{3}{\cdot} can be substantially higher than that of pentadiagonal \ofdC{3}{3}{2}{2}{\cdot}. A different study will be performed in the future to assess the tradeoff between the accuracy we gain and the computation cost we incur due to usage of schemes such as \ofdC{3}{3}{3}{3}{\cdot}.
To reduce the computational cost, we intend to develop the present framework further to derive prefactored optimized schemes such as presented in \cite{zang2003prefactor, zang2011prefactor, rona2017prefactor}.

Empirical observations show that KKT matrix used in determining optimal coefficients of implicit schemes suffer from rank deficiency at certain stencil size ($\npt$). However, explicit schemes are unaffected by this problem. This issue, perhaps, hints at the fundamental limitation of implicit schemes to scale up beyond certain stencil size, and it will be investigated in our future studies.

Although our framework allows us to derive biased schemes, for the brevity of discussion we have restricted numerical results presented in this work on periodic domains. Also, implementation of boundary conditions without compromising the accuracy of numerical solution is an important area of research and will be addressed in our future work.

\appendix
\section{Invariance of grid point location} \label{sec:appInvarianceGrid}
\begin{lem}
Optimal implicit central finite-difference approximation is invariant of the grid point location in the domain.
\label{lem:gridInvariance}
\end{lem}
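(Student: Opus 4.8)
The plan is to show that the entire optimization problem defining the optimal coefficients $\vo{a}_{i,d}^\ast$ and $\vo{b}_{i,d}^\ast$ at the $i^\text{th}$ grid point — both the cost function and all the equality constraints in \eqn{evenOptim} — does not depend on $i$, so that, by uniqueness of the KKT solution \eqn{padeEvenOptimal}, the optimal coefficients must coincide at every grid point.

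First I would repeat the derivation of \sect{implicitFramework} carrying the generic index $i$ along explicitly. Substituting the Fourier mode $f(x)=\hat{f}e^{jkx}$ into \eqn{pade1} written about $x_i$, every function value carries the common factor $e^{jkx_i}$: indeed $f_{i+m}=\hat{f}e^{jkx_i}e^{jkm\dx}$, and likewise for $f^{(d)}_{i+m}$. This factor cancels between the left- and right-hand sides, and between the numerator and denominator of \eqn{dthDerApprox}, so the spectral error $e(\kdx)$ in \eqn{padeWave}, and hence $\Ltwo{e(\kdx)}^2$ and the matrix $\vo{Q}_d$, are exactly the same functions of $(\vo{a}_d,\vo{b}_d)$ regardless of which $i$ is used. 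Equivalently, changing the grid point merely multiplies $\hat{f}$ by a phase, which leaves the ratio in \eqn{dthDerApprox} untouched.

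Next I would observe that the order-of-accuracy constraints \eqn{linPade} arise from matching coefficients of the Taylor expansions of $f_{i+m}$ and $f^{(d)}_{i+m}$ about $x_i$; those expansions involve only the stencil offsets $m$ (through $\vo{m}$, $\vo{X}_d$, $\vo{Y}_d$) and not $x_i$ itself, so these constraints are $i$-independent as well, and the normalization $\mathrm{b}_0=1$ in \eqn{nonSingular} is manifestly so. Consequently the quadratic program \eqn{evenOptim}, with the same $\vo{Q}_d$, $\vo{X}_d$, $\vo{Y}_d$ and $\bdel_N(\npt+1)$, is literally identical for all $i\in\{1,\dots,N_p\}$; invoking the assumed invertibility of the KKT matrix gives the unique optimal solution \eqn{padeEvenOptimal}, which is therefore independent of $i$. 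Hence $\vo{a}_{i,d}^\ast=\vo{a}_d^\ast$ and $\vo{b}_{i,d}^\ast=\vo{b}_d^\ast$ for every $i$, which is the claim.

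I expect the only point requiring care to be the bookkeeping of the periodic (cyclic) indexing: on a finite periodic grid the admissible Fourier modes form a discrete set, so one should either argue directly that the object being optimized is $e(\kdx)$ for continuous $\kdx\in[0,\pi]$, which is grid-point-agnostic by the cancellation above, or phrase the invariance as a relabeling symmetry of the cyclic stencil (the map $i\mapsto i+1$ composed with the cyclic shift $\shift{1}$ permutes the grid points and carries the stencil at $i$ onto that at $i+1$ without altering its coefficients). Either route closes the argument; the remainder is the routine observation that none of $\vo{Q}_d$, $\vo{X}_d$, $\vo{Y}_d$ carries the index $i$.
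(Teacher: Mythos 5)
Your proposal is correct, and its core ingredients are the same ones the paper relies on ($i$-independence of $\vo{Q}_d$, $\vo{X}_d$, $\vo{Y}_d$ and of the normalization, plus uniqueness from invertibility of the KKT matrix), but the execution is genuinely different. You argue pointwise: since the phase factor $e^{jkx_i}$ cancels in the ratio defining the spectral error and the Taylor-matching constraints involve only the stencil offsets $m$, the quadratic program \eqn{evenOptim} posed at grid point $i$ is literally the same problem for every $i$, so its unique solution \eqn{padeEvenOptimal} cannot depend on $i$. The paper instead poses a single domain-wide problem \eqn{padeAnalytical1} in the stacked variable $\vo{v}_d$, whose cost is $\vo{v}_d^T(\vo{I}_{N_p}\otimes\vo{Q}_d)\vo{v}_d$ and whose constraints are block-diagonal, and then computes the global KKT solution explicitly using the partitioned-inverse formula and the identity $(\vo{A}\otimes\vo{B})(\vo{C}\otimes\vo{D})=\vo{AC}\otimes\vo{BD}$, obtaining $\vo{v}_d^\ast=\vo{1}_{N_p\times 1}\otimes(\vo{a}_d^\ast{}^T,\vo{b}_d^\ast{}^T)^T$. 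What your route buys is brevity and an explicit justification, absent from the paper, of \emph{why} the per-point data are $i$-independent (phase cancellation on a uniform grid, offsets-only Taylor expansions); what the paper's Kronecker computation buys is an explicit verification that the domain-wide formulation is separable and that its stacked KKT system inherits nonsingularity from the single-point one, which is convenient since the lemma is used precisely to justify building $\vo{A}_d$ and $\vo{B}_d$ by replicating one row. Your closing remark on periodicity is apt but inessential: the cost is defined over continuous $\kdx\in[0,\pi]$, so no discrete-mode bookkeeping is needed; and, as you note, the relabeling symmetry via the cyclic shift is an equivalent way to phrase the same invariance.
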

\begin{proof}
Let the coefficients for the $d^\text{th}$ derivative at the $i^\text{th}$ grid point corresponding to \eqn{pade1} be parameterized by ${\vo{a}}_{i,d}$ and ${\vo{b}}_{i,d}$, and $\vo{A}_d$ and $\vo{B}_d$ be the vertical stacking of $\vo{a}^T_{i,d}$ and $\vo{b}^T_{i,d}$ respectively, i.e.
\begin{align*}
	\vo{A}_d := \begin{bmatrix}{\vo{a}}_{1,d}^T \\ \vdots \\ {\vo{a}}_{N_p,d}^T \end{bmatrix}, \; \vo{B}_d := \begin{bmatrix}{\vo{b}}_{1,d}^T \\ \vdots \\ {\vo{b}}_{N_p,d}^T \end{bmatrix}.
\end{align*}
The order accuracy constraint, for every $\vo{a}_{i,d}, \vo{b}_{i,d}$ pair,  can be compactly written as
\begin{align*}
\begin{bmatrix} \vo{A}_d & \vo{B}_d \end{bmatrix} \begin{bmatrix} \vo{X}_d \\ -\vo{Y}_d\end{bmatrix} = \vo{0}_{N_p\times (d+p+1)}.
\end{align*}
The cost function over $N_p$ grid points in the entire domain is
\begin{align*}
\sum_{i=1}^{N_p}
\begin{pmatrix}\vo{a}_{i,d}\\\vo{b}_{i,d}\end{pmatrix}^T\vo{Q}_d\begin{pmatrix}\vo{a}_{i,d}\\\vo{b}_{i,d}\end{pmatrix}.
\end{align*}
Optimization of the spectral error is separable for different derivatives, and can be formulated as following from \eqn{evenOptim}
\begin{equation}\left.
\begin{aligned}
& \min_{\vo{a}_{i,d},\vo{b}_{i,d}} \sum_{i=1}^{N_p}
\begin{pmatrix}\vo{a}_{i,d}\\\vo{b}_{i,d}\end{pmatrix}^T\vo{Q}_d\begin{pmatrix}\vo{a}_{i,d}\\\vo{b}_{i,d}\end{pmatrix},\\
&\text{ subject to } \\
&\begin{bmatrix} \vo{A}_d & \vo{B}_d \end{bmatrix} \begin{bmatrix} \vo{X}_d \\ -\vo{Y}_d\end{bmatrix} = \vo{0}_{N_p\times (d+p+1)},\\
&\vo{B}_d\bdel_N(\npt+1) = \vo{1}_{N\times 1},
\end{aligned}\;\;\right\}
\eqnlabel{padeAnalytical1}
\end{equation}
for $d=1,\cdots,D$. Let us define vector,
\begin{align*}
\vo{v}_d := \begin{pmatrix}\vo{a}_{1,d} \\ \vo{b}_{1,d} \\ \vdots \\ \vo{a}_{N_p,d} \\ \vo{b}_{N_p,d}\end{pmatrix}.
\end{align*}
The optimization in \eqn{padeAnalytical1}, can be written in terms of $\vo{v}_d$ as
\begin{align*}
\min_{\vo{v}_d} \vo{v}_d^T\left(\vo{I}_{N_p}\otimes \vo{Q}_d\right)\vo{v}_d, \text{ subject to}
\left(\vo{I}_{N_p}\otimes \begin{bmatrix}
\vo{X}_d^T & -\vo{Y}_d^T\\
\vo{0}_{1 \times N} & \bdel_N^T(\npt+1)
\end{bmatrix}\right)\vo{v}_d =  \vo{1}_{N_p\times 1} \otimes \bdel_{n}(n),
\end{align*}
for $n=(d+p+2)$. Then the analytical solution given by
\begin{align*}
\begin{pmatrix}\vo{v}_d^\ast\\\boldsymbol{\Lambda}_d^\ast\end{pmatrix} = \left[\begin{array}{cc} \vo{I}_{N_p}\otimes \vo{Q}_d & \vo{I}_{N_p}\otimes\left(\begin{array}{cc}\vo{X}_d^T & -\vo{Y}_d^T\\
\vo{0}_{1 \times N} & \bdel_N^T(\npt+1)\end{array}\right)^T\\
 \vo{I}_{N_p}\otimes\left(\begin{array}{cc}\vo{X}_d^T & -\vo{Y}_d^T\\
\vo{0}_{1 \times N} & \bdel_N^T(\npt+1)\end{array}\right) & \vo{I}_{N_p}\otimes\vo{0}_{(d+p+2)\times(d+p+2)}
\end{array}\right]^{-1}
\begin{bmatrix} \vo{1}_{N_p\times 1} \otimes \vo{0}_{2N\times 1} \\ \vo{1}_{N_p\times 1} \otimes \bdel_{n}(n) \end{bmatrix}.
\end{align*}
Inverse of a partitioned matrix is given by
\begin{align}
\begin{bmatrix}\mathbf {A} &\mathbf {B} \\\mathbf {C} & \mathbf {D} \end{bmatrix}^{-1}={\begin{bmatrix}\mathbf {A} ^{-1}+\mathbf {A} ^{-1}\mathbf {B} (\mathbf {D} -\mathbf {CA} ^{-1}\mathbf {B} )^{-1}\mathbf {CA} ^{-1}&-\mathbf {A} ^{-1}\mathbf {B} (\mathbf {D} -\mathbf {CA} ^{-1}\mathbf {B} )^{-1}\\-(\mathbf {D} -\mathbf {CA} ^{-1}\mathbf {B} )^{-1}\mathbf {CA} ^{-1}&(\mathbf {D} -\mathbf {CA} ^{-1}\mathbf {B} )^{-1}\end{bmatrix}}.
\eqnlabel{partitionInverse}
\end{align}
Let,
\begin{align*}
\left[\begin{array}{cc} \vo{Q}_d & \left(\begin{array}{cc}\vo{X}_d^T & -\vo{Y}_d^T\\
\vo{0}_{1 \times N} & \bdel_N^T(\npt+1)\end{array}\right)^T\\
 \left(\begin{array}{cc}\vo{X}_d^T & -\vo{Y}_d^T\\
\vo{0}_{1 \times N} & \bdel_N^T(\npt+1)\end{array}\right) & \vo{0}_{(d+p+2)\times(d+p+2)}
\end{array}\right]^{-1} := \begin{bmatrix}\vo{M}_1 & \vo{M}_2\\ \vo{M}_3 & \vo{M}_4 \end{bmatrix},
\end{align*}
where $\vo{M}_i$ are obtained using \eqn{partitionInverse}. Using the identity
\begin{align*}
(\vo{A}\otimes\vo{B})(\vo{C}\otimes\vo{D}) = \vo{AC}\otimes\vo{BD},
\end{align*}
and \eqn{partitionInverse}, we get
\begin{align*}
\left[\begin{array}{cc} \vo{I}_{N_p}\otimes \vo{Q}_d & \vo{I}_{N_p}\otimes\left(\begin{array}{cc}\vo{X}_d^T & -\vo{Y}_d^T\\
\vo{0}_{1 \times N} & \bdel_N^T(\npt+1)\end{array}\right)^T\\
 \vo{I}_{N_p}\otimes\left(\begin{array}{cc}\vo{X}_d^T & -\vo{Y}_d^T\\
\vo{0}_{1 \times N} & \bdel_N^T(\npt+1)\end{array}\right) & \vo{I}_{N_p}\otimes\vo{0}_{(d+p+2)\times(d+p+2)}
\end{array}\right]^{-1} = \begin{bmatrix}\vo{I}_{N_p}\otimes \vo{M}_1 & \vo{I}_{N_p}\otimes \vo{M}_2\\ \vo{I}_{N_p}\otimes \vo{M}_3 & \vo{I}_{N_p}\otimes \vo{M}_4 \end{bmatrix}.
\end{align*}
Therefore, the optimal solution
\begin{align*}
\begin{pmatrix}
\vo{v}_d^\ast\\\boldsymbol{\Lambda}_d^\ast
\end{pmatrix} &= \begin{bmatrix}\left(\vo{I}_{N_p}\otimes \vo{M}_1\right)\left(\vo{1}_{N_p\times 1}\otimes \vo{0}_{2N \times 1}\right) + \left(\vo{I}_{N_p}\otimes \vo{M}_2\right)\left(\vo{1}_{N_p\times 1} \otimes \bdel_{n}(n)\right)\\
\left(\vo{I}_{N_p}\otimes \vo{M}_3\right)\left(\vo{1}_{N_p\times 1}\otimes \vo{0}_{2N \times 1}\right) + \left(\vo{I}_{N_p}\otimes \vo{M}_4\right)\left(\vo{1}_{N_p\times 1} \otimes \bdel_{n}(n)\right)\end{bmatrix},\\
& = \begin{bmatrix} \vo{1}_{N_p\times 1} \otimes \left(\vo{M}_1 \vo{0}_{2N \times 1} + \vo{M}_2 \bdel_{n}(n) \right)\\[2mm]
\vo{1}_{N_p\times 1}\otimes \left(\vo{M}_3 \vo{0}_{2N \times 1} + \vo{M}_4 \bdel_{n}(n) \right)\end{bmatrix},\\
& = \begin{bmatrix}
\vo{1}_{N_p\times 1} \otimes \begin{pmatrix} \vo{a}_d^\ast\\ \vo{b}_d^\ast \end{pmatrix} \\
\vo{1}_{N_p\times 1} \otimes \boldsymbol{\lambda}_d^\ast
\end{bmatrix},
\end{align*}
where $n=(d+p+2)$ and $(\vo{a}_d^\ast,\vo{b}_d^\ast,,\boldsymbol{\lambda}_d^\ast)$ is the solution of \eqn{padeEvenOptimal}. Therefore, the optimal solution is identical for all grid points.
\end{proof}
Note that, Lemma \ref{lem:gridInvariance} holds only for central schemes used on a periodic domain. In a scenario where we have boundary conditions, optimal coefficients will be different for grid points near boundaries of the domain.

\clearpage
\section{Optimal coefficients} \label{sec:appOptimCoeff}

\FloatBarrier
\begin{table}[htb]
\centering
\begin{tabular}{|c|c|c|c|c|}
 \hline
& $M=1$ & $M=2$ & $M=3$ & $M=4$ \\
 \hline
 $\mathrm{a}_{0} ^\ast$ & -2.4 & -1.55920152194026 & -0.979288292571078 & -0.719422653838933 \\
 $\mathrm{a}_{1} ^\ast$ &  1.2 & 0.396897309677732 & -0.033306701818875 & -0.156640799785708 \\
 $\mathrm{a}_{2} ^\ast$ &      & 0.382703451292396 & 0.440495791275238  & 0.340037669820826  \\
 $\mathrm{a}_{3} ^\ast$ &      &                   & 0.0824550568291757 & 0.161702448995973  \\
 $\mathrm{a}_{4} ^\ast$ &      &                   &                    & 0.0146120078883761 \\
 \hline
 $\mathrm{b}_{0} ^\ast$ & 1   & 1                  & 1                   & 1                    \\
 $\mathrm{b}_{1} ^\ast$ & 0.1 & 0.437358728499431  & 0.607804000534683   & 0.69537501810989     \\
 $\mathrm{b}_{2} ^\ast$ &     & 0.0264968289242269 & 0.122983617052232   & 0.223008838348136    \\
 $\mathrm{b}_{3} ^\ast$ &     &                    & 0.00459836978541528 & 0.0272452165890212   \\
 $\mathrm{b}_{4} ^\ast$ &     &                    &                     & 0.000682950290635715 \\
\hline
\end{tabular}
\caption{Coefficients for the second derivative corresponding to \fig{stencilCoeffD2}.
Note that  $\mathrm{a}_{-m} ^\ast= \mathrm{a}_{m} ^\ast$ and $\mathrm{b}_{-m} ^\ast= \mathrm{b}_{m} ^\ast$.}
\label{table:stencilCoeffD2}
\end{table}

\FloatBarrier
\begin{table}[htb]
\centering
\begin{tabular}{|c|c|c|c|c|}
 \hline
 & $M=1$ & $M=2$ & $M=3$ & $M=4$ \\
 \hline
 $\mathrm{a}_{0} ^\ast$ & 0    & 0                 & 0                  & 0                   \\
 $\mathrm{a}_{1} ^\ast$ & 0.75 & 0.682194069313335 & 0.560054939856331  & 0.472419664132013   \\
 $\mathrm{a}_{2} ^\ast$ &      & 0.214144479273011 & 0.326746645436286  & 0.367572867069987   \\
 $\mathrm{a}_{3} ^\ast$ &      &                   & 0.0418602478971568 & 0.0980340659498803  \\
 $\mathrm{a}_{4} ^\ast$ &      &                   &                    & 0.00699750157631073 \\
\hline
 $\mathrm{b}_{0} ^\ast$ & 1    & 1                  & 1                  & 1                   \\
 $\mathrm{b}_{1} ^\ast$ & 0.25 & 0.547827381201651  & 0.658367308183134  & 0.72407136413065    \\
 $\mathrm{b}_{2} ^\ast$ &      & 0.0626556466577058 & 0.170094141092335  & 0.26326428439027    \\
 $\mathrm{b}_{3} ^\ast$ &      &                    & 0.0106675251449049 & 0.0407179433494389  \\
 $\mathrm{b}_{4} ^\ast$ &      &                    &                    & 0.00160401055651088 \\
\hline
\end{tabular}
\caption{Coefficients for the first derivative corresponding to \fig{stencilCoeffD1}.
Note that  $\mathrm{a}_{-m} ^\ast= -\mathrm{a}_{m} ^\ast$ and $\mathrm{b}_{-m} ^\ast= \mathrm{b}_{m} ^\ast$.}
\label{table:stencilCoeffD1}
\end{table}

Note that, in Table \ref{table:stencilCoeffNonCentralD2} and \ref{table:stencilCoeffNonCentralD1}, $^\#$ denotes coefficient of the grid point at which derivative is to be approximated.
\FloatBarrier
\begin{table}[htb]
\centering
\begin{tabular}{|c|c|c|c|c|}
 \hline
 & $\npt_L=4$ & $\npt_L=5$ & $\npt_L=6$ \\
 \hline
               & 0.135141927552199        & 0.662304984252634     & 17.3670624080996  \\
               & 0.722707534591416        & 3.53558092392018      & 92.3021288782114  \\
               & -0.0524729395599328      & -0.250337371007728    & -6.49624407257855 \\
$\vo{a}^\ast$  & -1.60731259496048        & -7.85976328865994     & -204.957850510534 \\
               & $^\#$-0.0583231083517459 & -0.310357190338834    & -8.85074823578218 \\
               & 0.72408652155438         & $^\#$3.54961920557169 & 92.8470054538251  \\
               & 0.13617265917416         & 0.672952736261999     & $^\#$17.7886460787584 \\
\hline
               & 0.0075365800956553  & 0.0369784406883765 & 0.971865865002116 \\
               & 0.201615926044887   & 0.987464737949076  & 25.8562746553017  \\
               & 0.99713483322273    & 4.88096076338054   & 127.576933509124  \\
 $\vo{b}^\ast$ & 1.64238167997833    & 8.04522660766854   & 210.335351055833  \\
               & $^\#  $1            & 4.91039583089326   & 128.733779558004  \\
               & 0.202831788738001   & $^\#  $1           & 26.3512560574889  \\
               & 0.00760492052475932 & 0.0376863400465079 & $^\#  $1          \\
\hline
\end{tabular}
\caption{Coefficients for the left-biased schemes which approximate second derivative corresponding to \fig{implicitspectralErrorNonPeriodD2}.
Corresponding central scheme is $M=3$ from Table \ref{table:stencilCoeffD2}.}
\label{table:stencilCoeffNonCentralD2}
\end{table}

\FloatBarrier
\begin{table}[htb]
\centering
\begin{tabular}{|c|c|c|c|c|}
 \hline
 & $\npt_L=4$ & $\npt_L=5$ & $\npt_L=6$ \\
 \hline
               &-0.0621972998530267        & -0.232619531619737       & -3.52690296300194 \\
               &-0.488868232296276         & -1.83780260017807        & -27.9116587382461 \\
               &-0.846178148476397         & -3.2098376453028         & -49.0922987950105 \\
$\vo{a}^\ast$  &-0.00918720281492579       & -0.0700718215089091      & -1.57497223752058 \\
               & $^\#  $0.844294225659752  &  3.19534746352695        & 48.7613719684532  \\
               & 0.497811414459153         & $^\#  $1.90604922449128  & 29.4470383302118  \\
               & 0.0643252433217213        & 0.248934910591297        & $^\#  $3.89742243511407 \\
\hline
               & 0.0158345798757476 & 0.0591989978487381 & 0.898171816719291 \\
               & 0.253744537333521  & 0.951710004840582  & 14.4387550915639  \\
               & 0.987857487221426  & 3.72365571735861   & 56.675421516294   \\
 $\vo{b}^\ast$ & 1.50948607590456   & 5.7229905302476    & 87.5412499132427  \\
               & $^\#  $1           & 3.81632520991078   & 58.7608049690781  \\
               & 0.260059400465721  & $^\#  $1           & 15.5296377878293  \\
               & 0.016417216370275  & 0.0636716245952377 & $^\#  $1          \\
\hline
\end{tabular}
\caption{Coefficients for the left-biased schemes which approximate first derivative corresponding to \fig{implicitspectralErrorNonPeriodD1}.
Corresponding central scheme is $M=3$ from Table \ref{table:stencilCoeffD1}.}
\label{table:stencilCoeffNonCentralD1}
\end{table}
\FloatBarrier

\section{Butcher tableaux of RK schemes} \label{sec:appButcherTab}

\begin{enumerate}
\item Forward Euler method (FE) 
\begin{align*}
\begin{array}{c|c }
0 & 0 \\
\hline
\quad & 1 \\
\end{array}
\end{align*}

Following RK schemes are taken from \cite{butcher2008ode}.
\item Explicit four stage RK method (ERK4) 
\begin{align*}
\begin{array}{c|c c c c}
0 &   0 & 0 & 0 & 0\\
1/2 & 1/2 & 0 & 0 & 0\\
1/2 & 0 & 1/2 & 0 & 0\\
1 &   0 & 0 & 1 & 0\\
\hline
\quad & 1/6 & 1/3 & 1/3 & 1/6\\
\end{array}
\end{align*}

\item Implicit two stage RK method (IRK2) 
\begin{align*}
\begin{array}{c|c c}
0 & 0 & 0\\
2/3 & 1/3 & 1/3\\
\hline
\quad & 1/4 & 3/4\\
\end{array}
\end{align*}

\item Implicit three stage RK method (IRK3) 
\begin{align*}
\begin{array}{c|c c c }
0.158984 & 0.158984 & 0 & 0 \\
0.579492 & 0.420508 & 0.158984 & 0 \\
1 & 0.348023 & 0.492993 & 0.158984 \\
\hline
\quad & 0.348022 & 0.492994 & 0.158984 \\
\end{array}
\end{align*}
\end{enumerate}

\section*{Acknowledgements}
Funding: This work was supported by the National Science Foundation [grant numbers 1762825, 1439145].

\bibliographystyle{unsrt}
\bibliography{implicitFD}

\end{document}